\documentclass[reqno]{amsart}
\usepackage{amssymb}
\usepackage[usenames, dvipsnames]{color}

\usepackage{hyperref}

\usepackage{esint}
\usepackage{marginnote}
\usepackage{todonotes}
\usepackage{verbatim}

\theoremstyle{plain}
\newtheorem{theorem}{Theorem}[section]
\newtheorem{lemma}[theorem]{Lemma}
\newtheorem{corollary}[theorem]{Corollary}
\newtheorem{proposition}[theorem]{Proposition}

\theoremstyle{definition}
\newtheorem{definition}[theorem]{Definition}

\newtheorem{assumption}[theorem]{Assumption}

\theoremstyle{remark}
\newtheorem{remark}[theorem]{Remark}

\numberwithin{equation}{section}

\newcommand{\bR}{\mathbb{R}}
\newcommand{\bH}{\mathbb{H}}

\newcommand\cD{\mathcal{D}}

\newcommand\cH{\mathcal{H}}

\newcommand\cU{\mathcal{U}}

\providecommand{\norm}[1]{\lVert#1\rVert}

\makeatletter
\def\dashint{\operatorname%
{\,\,\text{\bf-}\kern-.98em\DOTSI\intop\ilimits@\!\!}}
\makeatother

\renewcommand{\epsilon}{\varepsilon}

\makeatletter
\@namedef{subjclassname@2020}{%
  \textup{2020} Mathematics Subject Classification}
\makeatother

\begin{document}

\title[Regularity estimates, singular-degenerate coefficients]{On parabolic and elliptic equations with singular or degenerate coefficients}

\author[H. Dong]{Hongjie Dong}
\address[H. Dong]{Division of Applied Mathematics, Brown University,
182 George Street, Providence, RI 02912, USA}
\email{Hongjie\_Dong@brown.edu}

\author[T. Phan]{Tuoc Phan}
\address[T. Phan]{Department of Mathematics, University of Tennessee, 227 Ayres Hall,
1403 Circle Drive, Knoxville, TN 37996-1320, USA}
\email{phan@math.utk.edu}

\thanks{T. Phan is partially supported by the Simons Foundation, grant \# 354889.}

\subjclass[2020]{35K65, 35K67, 35D10, 35R11}
%

\begin{abstract}
We study both divergence and non-divergence form parabolic and elliptic equations in the half space $\{x_d>0\}$ whose coefficients are the product of $x_d^\alpha$ and uniformly nondegenerate bounded measurable matrix-valued functions, where $\alpha \in (-1, \infty)$. As such, the coefficients are singular or degenerate near the boundary of the half space.  For equations with the conormal or Neumann boundary condition, we prove the existence, uniqueness, and regularity of solutions in weighted Sobolev spaces and mixed-norm weighted Sobolev spaces when the coefficients are only measurable in the $x_d$ direction and have small mean oscillation in the other directions in small cylinders. Our results are new even in the special case when the coefficients are constants, and they are reduced to the classical results when $\alpha =0$. \end{abstract}

\maketitle
\section{Introduction and main results}

In this paper, we study the existence, uniqueness, and regularity estimates of solutions in Sobolev spaces to a class of parabolic (and elliptic) equations in the upper half space, whose coefficients can be singular or degenerate on the boundary of the upper half space in a way which may not satisfy the classical Muckenhoupt $A_2$ condition.

Throughout the paper, let  $\Omega_T =(-\infty,T)\times \bR^d_+$ be a space-time domain, where $T\in (-\infty,+\infty]$, $\bR_+= (0, \infty)$,  and $\bR^d_+ = \bR^{d-1} \times \bR_+$ is the upper-half space. Let $(a_{ij}): \Omega_T \rightarrow \bR^{d\times d}$ be a matrix of measurable coefficients, which satisfies the following ellipticity and boundedness conditions: there is a constant $\kappa \in (0,1)$ such that
\begin{equation} \label{ellipticity}
\begin{split}
\kappa |\xi|^ 2 \leq a_{ij}(t,x) \xi_i\xi_j \quad \text{and} \quad |a_{ij}(t,x)| \leq \kappa^{-1}
\end{split}
\end{equation}
for every $\xi = (\xi_1, \xi_2,\ldots, \xi_d) \in \bR^d$ and $(t,x)=(t,x',x_d) \in \Omega_T$. Here we do not impose the symmetry condition on $(a_{ij})$.  Let $\alpha \in (-1, \infty)$ be a fixed number. We investigate the conormal boundary value problem
\begin{equation} \label{eq3.23}
\left\{
\begin{aligned}
x_d^\alpha ( u_t + \lambda  u ) - D_i[x_d^\alpha (a_{ij}(t,x) D_j u - F_i)] & = \sqrt{\lambda} x_d^\alpha f\\
\lim_{x_d \rightarrow 0^+}x_d^\alpha (a_{dj}(t,x) D_j u - F_d)  &= 0
\end{aligned} \right. \quad \text{in} \  \Omega_T,
\end{equation}
where $F = (F_1, F_2,\ldots, F_d): \Omega_T \rightarrow \bR^d$ and $f: \Omega_T \rightarrow \bR$ are given measurable functions in suitable weighted Lebesgue spaces, and $\lambda \geq 0$ is a parameter. It is worth noting that the weight $x_d^\alpha$ satisfies the Muckenhoupt $A_2$ condition only if $\alpha\in (-1, 1)$.  As a special case of our main results, for the model equation
\begin{equation}
                                \label{eq10.21}
\left\{\begin{aligned}
x_d^\alpha u_t-\textup{div}[x_d^\alpha (\nabla u -F)] &= x_d^\alpha f\\
\lim_{x_d \rightarrow 0^+} x_d^\alpha (D_d u-F_d)   & = 0
\end{aligned}\right.
\end{equation}
in the upper-half parabolic cylinder $Q_2^+$ and for $\alpha \in (-1, \infty)$, we obtain the local boundary weighted estimate
\begin{align*}
\Big(\int_{Q_1^+} [|u|^p+ |D u|^p] x_d^\alpha \, dz\Big)^{1/p}
&\leq N \int_{Q_2^+} [|u|+|Du|] x_d^\alpha \, dz \\
& + N \Big(\int_{Q^+_2} |F|^p x_d^\alpha \, dz\Big)^{1/p} +  N\Big(\int_{Q^+_2} |f|^{p^*} x_d^\alpha \, dz\Big)^{1/p^*}
\end{align*}
for every $p\in (1,\infty)$, where $p^* \in [1, p)$ depending on $\alpha$, $p$, and $d$  as in \eqref{eq3.19}-\eqref{eq3-2.19} below and $N>0$ is a constant depending on $d$, $\alpha, p$, and $p^*$. Equation \eqref{eq10.21} is related to the extension problem of the fractional heat operator (see, for example, \cite{NS16, ST17, BG}) and our result in this special case is already new.

We also consider the parabolic equation in non-divergence form
 \begin{equation} \label{non-div.eqn}
 a_0(t,x) u_t  - a_{ij}(t,x) D_{ij} u(t,x) - \frac{\alpha}{x_d} a_{dj}(t,x) D_j u(t,x) + \lambda c_0(t,x) u   = f \\
\end{equation}
in $\Omega_T$ with the boundary condition
\begin{equation} \label{non-div.bry}
\lim_{x_d \rightarrow 0^+} x_d^\alpha a_{dj}(t,x) D_j u(t,x', x_d) =0,
\end{equation}
where  $a_0, c_0: \Omega_T \rightarrow \mathbb{R}$ are measurable functions satisfying
\begin{equation} \label{a-c-eq}
\kappa \leq a_0(t,x), \ c_0(t,x) \leq \kappa^{-1}, \quad (t,x) \in \Omega_T.
\end{equation}
In this case, we impose an additional structural condition on the leading coefficients $a_{ij}$:
\begin{equation} \label{extension-type-matrix}
a_{dj}(t,x) =0, \quad j = 1,2,\ldots, d-1,
\end{equation}
or $a_{dj}=\lambda_{j}a_{dd}$ for $j=1,2,\ldots,d-1$ with constants $\lambda_{j}$, which can be reduced to \eqref{extension-type-matrix} after the change of variables $y_j = x_j - \lambda_j x_d$ for $j=1,2,\ldots, d-1$ and $y_d= x_d$. We note that this condition is satisfied for a large class of equations. See, for instance, \cite{BG, Cal-Syl, Pop-1, Gushin}.
Unlike \eqref{eq3.23}, the equation \eqref{non-div.eqn} has extra coefficients $a_0$ and $c_0$.  The main reason we introduce them in \eqref{non-div.eqn} is for convenience because in the proofs of main results for \eqref{non-div.eqn}-\eqref{non-div.bry}, we divide both sides of \eqref{non-div.eqn} by $a_{dd}$ to use the hidden divergence structure of   the equation.  Nevertheless, with $a_0$ and $c_0$ the equation \eqref{non-div.eqn} is slightly more general. Of course, in view of \eqref{a-c-eq}, by dividing both sides of \eqref{non-div.eqn} by $a_0$ or $c_0$, one can always assume one of them to be the identity.

The interest of studying the class of equations \eqref{eq3.23} and \eqref{non-div.eqn} comes from both pure mathematics and applied problems. As examples, we refer the reader to \cite{BG, Cal-Syl} for problems about fractional heat and fractional Laplace equations, and  \cite{Pop-1} for problems arising in mathematical finance. This paper is a continuation of \cite{Dong-Phan} and \cite{D-P}. In \cite{Dong-Phan}, we considered a class of parabolic equations in divergence form with a general weight
\begin{equation} \label{eq1.58}
 a_0(x_d)u_t - \frac 1 {\mu(x_d)} D_i[\mu(x_d) (a_{ij} D_j u - F_i)] +\lambda   u = f
\end{equation}
in the half space $\{x_d>0\}$ with conormal boundary condition:
\begin{equation} \label{main-eqn}
\lim_{x_d \rightarrow 0^+}\mu(x_d)(a_{dj}D_j u - F_d)  = 0.
\end{equation}
Here $(a_{ij})$ satisfies \eqref{ellipticity},
$a_0\in [\kappa,\kappa^{-1}]$, $\lambda\ge 0$, and the weight $\mu$ satisfies the $A_2$ condition and a relaxed $A_1$ type condition away from the boundary. This, in particular, includes the $A_2$ weights $\mu(x_d)=x_d^\alpha$ for any $\alpha\in (-1,1)$. We obtained the local and global weighted Calder\'on-Zygmund type estimates for \eqref{eq1.58}-\eqref{main-eqn} with respect to the weight $\mu$, under the condition that the coefficients are only measurable in the $x_d$ direction and have small mean oscillation in the other directions in small cylinders (partially VMO) with respect to the considered weight. The proofs in \cite{Dong-Phan} carry over to systems under the usual strong ellipticity condition. In \cite{D-P}, we studied the corresponding non-divergence form scalar equations \eqref{non-div.eqn}, where $\alpha\in (-1,1)$ and $a_0, c_0$ satisfy \eqref{a-c-eq}. Under the condition that $a_0$, $a_{ij}$, and $c_0$ are partially VMO respect to the weight $x_d^\alpha$, we obtained weighted mixed-norm $W^{1,2}_p$ estimates and solvability. The aim of this paper is to extend the results in \cite{Dong-Phan, D-P} to the full range of exponent $\alpha\in (-1,\infty)$. It is worth noting that even for divergence form equations, in contrast to \cite{Dong-Phan}, the proofs below only work for scalar equations because the Moser iteration is used (cf. Lemma \ref{lem2}). For other related work in this direction, we refer the reader to the references in \cite{Dong-Phan, D-P}.

The class of partially VMO coefficients was first introduced by Kim and Krylov \cite{MR2338417,MR2300337} for non-degenerate elliptic and parabolic equations in non-divergence form. Divergence form elliptic and parabolic equations with non-degenerate partially VMO coefficients were later studied in \cite{MR2601069, MR2584743}.  This type of equations arises from the problems of linearly elastic laminates and composite materials, e.g., in homogenization of layered materials. See, for instance, \cite{Chipot}. We also refer the reader to \cite{DK11,DK11b, MR3812104} for extensions to second-order and higher-order systems with or without weights.

We apply a mean oscillation argument, which was used in \cite{Krylov} for non-degenerate parabolic equations with coefficients which are VMO in the space variables. In the case of partially VMO coefficients, the main difficulty is that, since they are merely measurable in $x_d$, it is only possible to estimate the mean oscillation of $D_{x'}u$, not the full gradient $Du$. Therefore, one needs to bound $D_d u$ by $D_{x'}u$. An idea in \cite{MR2601069, MR2584743} is to break the ``symmetry'' of the coordinates so that $t$ and $x_d$ are distinguished from $x'$ by using a delicate re-scaling argument. Another idea is to estimate the mean oscillation of $a_{dd}D_d u$ instead of $D_d u$, and apply a generalized Fefferman--Stein theorem established in \cite{MR2540989}. In \cite{DK11}, a new method was developed, in which the key step is to estimate $\cU:=a_{dj} D_ju$ and $D_iu$, $i = 1, \ldots,d-1,$ instead of the full gradient of $u$. By using this argument,  one was able to bypass the scaling argument mentioned above and greatly simplified the proof. In this paper, we adapt this method to singular/degenerate equations.

In our main results, Theorems \ref{thm2}, \ref{thm1.4}, and \ref{para.theorem} 
below, we obtain the unique solvability \eqref{eq3.23} and \eqref{non-div.eqn}-\eqref{non-div.bry} in weighted Sobolev spaces and mixed-norm weighted Sobolev spaces.  Local boundary estimates for  solutions of these equations are also obtained in Corollaries \ref{main-thrm} and \ref{cor1.8}. To the best of our knowledge, these results are new even in the elliptic case and in the unmixed-norm case with constant coefficients $a_{ij}$, $a_0$, and $c_0$.

The proofs of the main theorems are based on an idea in \cite{DK11} mentioned above and the perturbation technique. To implement the method, we  first consider equations whose coefficients depend only on $x_d$ and prove various results on the existence, uniqueness, and regularity of solutions to this class of equations.  For this, we establish the $L_\infty$ estimate of weak solutions by applying the Moser iteration, and then derive Lipschitz and Schauder type estimates.  In particular, to estimate the $L_\infty$ norms of $D_d u$ and $\cU$, we use a bootstrap argument.  Schauder type estimates for elliptic equations similar to \eqref{eq.0612} were proved recently in \cite{Sire-1} when the matrix $(a_{ij})$ is symmetric, H\"older in all variables, and satisfies a structural condition that the hyperplane $\{x_d =0\}$ is  invariant with respect to $(a_{ij})$, i.e, $a_{jd}=d_{dj}=0$ for $j=1,\ldots,d-1$. The proof in \cite{Sire-1} uses a Liouville type theorem and a compactness argument. Our proof in Section \ref{div-sim-sec} is more direct and works for more general operators. For the local estimates Corollaries \ref{main-thrm} and \ref{cor1.8}, we prove a parabolic embedding (see Lemma \ref{lem2.2}) by using a generalized Hardy--Littlewood Sobolev inequality in \cite{HL28}, which seems to be new in the weighted setting and is of independent interest.  We also remark that in contrast to the previous work such as \cite{FKS, CPM, Dong-Phan} in which the $A_2$ weights are commonly assumed as the weighted Poincar\'e inequality is needed, we do not use the weighted Poincar\'e inequality in the proof.  In fact,  as pointed out in \cite{Sire-1}, when $\alpha\ge 1$, such inequality is not valid.

For simplicity, in this paper we choose not to consider lower-order terms. The results still hold for equations
$$
x_d^\alpha ( u_t -b_iD_i u-cu+ \lambda  u ) - D_i[x_d^\alpha (a_{ij} D_j u+\hat b_i u- F_i)] = \sqrt{\lambda} x_d^\alpha f
$$
and
$$
 a_0 u_t  - a_{ij} D_{ij} u - \Big(\frac{\alpha}{x_d} a_{dj}+b_i\Big) D_j u(t,x)-cu + \lambda c_0 u   = f,
$$
where $b_i$, $\hat b_i$, and $c$ are bounded measurable functions. To see this, it suffices to move the terms $b_iD_iu$ and $cu$ to the right-hand side of the equations, absorb $\hat b_i u$ to $F_i$, and take a sufficiently large $\lambda$. See, for example, \cite{Krylov} for details. By using the weighted embedding results such as Lemma \ref{lem2.2} below, it is also possible to consider unbounded lower-order coefficients. We refer the reader to the recent interesting work \cite{KK19,KT20, Kr20b,KRW20} and the references therein.

The remaining part of the paper is organized as follows. In the next section, we introduce some notation and state the main results of the paper.
In Section \ref{inq-sec}, we  prove two weighted embedding results that are needed in the paper as well as a result on the existence and uniqueness of $L_2$-solutions.  In Section \ref{div-sim-sec}, we study equations whose coefficients depend only on $x_d$. We prove the existence, uniqueness, and regularity estimates of solutions in $\cH_p^1(\Omega_T, \mu)$ after we obtain the $L_\infty$, Lipschitz, and Schauder type estimates for solutions to homogeneous equations. Finally, in Section \ref{mea-sec}, we provide the proofs of Theorems \ref{thm2} and \ref{thm1.4}, and Corollaries \ref{main-thrm} and \ref{cor1.8}.
\section{Notation and main theorems}
\subsection{Notation}
For $r>0$, $z_0 = (t_0, x_0)$ with $x_0 = (x_0', x_{0d}) \in  \bR^{d-1} \times \bR$ and $t_0 \in \bR$, we define $B_r(x_0)$  to be the ball in $\bR^d$ of radius $r$ centered at $x_0$, $Q_r(z_0)$ to be the parabolic cylinder of radius $r$ centered at $z_0$:
\[
Q_r(z_0) = (t_0-r^2 , t_0)\times B_r(x_0),
\]
and $B_r^+(x_0)$ and $Q_r^+(z_0)$ to be the upper-half ball and cylinder of radius $r$ centered at $x_0$ and $z_0$, respectively
\begin{align*}
B_r^+(x_0) &= \{x = (x_d, x') \in \bR^d:\, x_d >0, \ |x -x_0| < r\},\\
Q_r^+ (z_0)&= ( t_0-r^2, t_0)\times B_r^+ (x_0).
\end{align*}
When $x_0 =0$ and $t_0 =0$, for the simplicity of notation we drop $x_0, z_0$ and write $B_r$, $B_r^+$, $Q_r$, and $Q_r^+$, etc. 
We also define $B'(x_0')$ and $Q'(z'_0)$ to be the ball and the  parabolic cylinder in $\bR^{d-1}$ and $\bR^{d}$, where $z_0' = (t_0, x_0')$.

For $p \in (1, \infty)$,  $-\infty\le S<T\le +\infty$, and $\cD \subset \bR^d_+$, let $L_p((S,T)\times \cD, \mu)$ be the weighted Lebesgue space consisting of measurable function $g$ on $(S,T)\times \cD$ such that its norm
\[
\|g\|_{L_p( (S,T)\times \cD, \mu)} = \left( \int_{(S,T)\times \cD} |g(t,x)|^p\,\mu(dz) \right)^{1/p} <\infty,
\]
where $\mu(dz) = x_d^\alpha\,dxdt$. For $p,q\in (1,\infty)$ and the weights $\omega_0=\omega_0(t)$ and $\omega_1=\omega_1(x)$, we define $L_{q,p}(\Omega_T,\omega\,d\mu)$ to be the weighted mixed-norm Lebesgue space on $\Omega_T$ equipped with the norm
\begin{equation*}
\|f\|_{L_{q,p}(\Omega_T, \omega\, d\mu)}=\left(\int_0^T\Big(\int_{\bR^d_+} |f(t,x)|^p \omega_1(x)\,\mu(dx)\Big)^{q/p}\omega_0(t)\,dt\right)^{1/q},
\end{equation*}
where $\omega(t,x)=\omega_0(t)\omega_1(x)$. We also define
\[
\begin{split}
& \bH_{q, p}^{-1}( (S,T)\times \cD, \omega d\mu) \\
& =\{g:\, g  =  D_iF_i+F_0/x_d +f\ \ \text{for some}\ f\in L_{q, p}( (S,T)\times \cD, \omega d\mu)\\
& \qquad \ F= (F_0,\ldots,F_d) \in L_{q,p}((S,T)\times \cD, \omega d\mu)^{d +1}\}
\end{split}
\]
and
\[
\begin{split}
& \cH_{q,p}^1((S,T)\times \cD, \omega d\mu)\\
& =\{g\, : g,Dg\in L_p((S,T)\times \cD, \omega d\mu),\, g_t\in  \bH_{q,p}^{-1}( (S,T)\times \cD, \omega d\mu)\},
\end{split}
\]
which are equipped with the norms
\begin{align*}
\|g\|_{\bH_{q,p}^{-1}((S,T)\times \cD, \omega d\mu)} &=\inf\{\|F\|_{L_{q,p}((S,T)\times \cD, \omega d\mu)}
+\|f\|_{L_{q,p}((S,T)\times \cD, \omega d\mu)}: \\
& \quad \qquad \ g= D_iF_i+F_0/x_d+f\}
\end{align*}
and
\begin{align*}
&\|g\|_{\cH_{q,p}^1((S,T)\times \cD, \omega d\mu)} \\
&= \|g\|_{L_{q,p}((S,T)\times \cD, \omega d\mu)}
+ \|Dg\|_{L_{q, p}((S,T)\times \cD,\omega d \mu)}+\|g_t\|_{\bH_{q, p}^{-1}((S,T)\times \cD, \omega d\mu)}.
\end{align*}
When $p =q$, we simply write $\cH_p^1(\Omega_T, \omega d\mu) = \cH_{p,p}^1(\Omega_T, \omega d\mu)$. Similar notation are also used for other spaces. When $\omega\equiv 1$, we have $L_{q,p}(\Omega_T, \omega d\mu)  = L_{q,p}(\Omega_T, \mu)$, and similarly for other function spaces.

We say that $u \in \cH_{q,p}^1((S,T)\times \cD, \omega d\mu)$ is a weak solution of \eqref{eq3.23} in $(S,T)\times \cD$ if
\begin{align*}
&\int_{(S,T)\times \cD} (-u \partial_t \varphi +\lambda u \varphi)\,\mu(dz) + \int_{(S,T)\times \cD}( a_{ij} D_{j} u  - F_i)D_{i} \varphi\,\mu(dz) \\
&= \lambda^{1/2} \int_{(S,T)\times \cD} f(z) \varphi(z)\,\mu(dz)
\end{align*}
for any $\varphi \in C_0^\infty((S,T)\times (\cD\cup (\overline{\cD}\cap \partial \bR^d)))$.

We use the notation $a_+=\max\{a,0\}$ and $a_-=\max\{-a,0\}$ for $a\in \bR$ so that $a=a_+-a_-$.  Finally,  for a set $\Omega\subset \bR^{d+1}$ and any integrable function $f$ on $\Omega$ with respect to some Borel measure $\omega$, we write
$$
\fint_\Omega f\ \omega(dz) =\frac 1 {\omega(\Omega)}\int_\Omega f\, \omega(dz), \quad \text{where} \quad \omega(\Omega) = \int_{\Omega} \omega(dz).
$$
\subsection{Main theorems} As in \cite{Dong-Phan,D-P}, we impose the following partially  VMO condition on the leading coefficients.
\begin{assumption}[$\gamma_0, R_0$]
            \label{assump1}
For any $r\in (0, R_0]$ and $z_0=(z_0',x_d) \in \bR^d \times \overline{\bR}_+$,
we have
\[
\sup_{i,j}\fint_{Q^+_{r}(z_0)} |a_{ij}(t,x) - [a_{ij}]_{r,z_0}(x_d)|\, \mu(dz)\\
\le \gamma_0,
\]
where $\mu(dz) = x_d^\alpha\, dtdx$, $[a_{ij}]_{r,z_0}(x_d)$ is the average of $a_{ij}$ with respect to $(t, x')$  in $Q'_{r}(z_0')$:
$$
[a_{ij}]_{r,z_0}(x_d) = \fint_{Q'_{r}(z_0')}a_{ij}(t,x', x_d) \,dx'\, dt.
$$
\end{assumption} \noindent
In the special case that the coefficients $(a_{ij})$ only depend on the $x_d$ variable,  no regularity assumption is required on them as Assumption \ref{assump1} ($\gamma_0, R_0$) is always satisfied.

Our first main result is about the existence, uniqueness, and global regularity estimates of solutions to the divergence form equation \eqref{eq3.23}.
\begin{theorem} \label{thm2}
Let $\alpha \in (-1, \infty)$, $\kappa \in (0,1)$, $R_0 \in (0, \infty)$, and $p \in (1,\infty)$.  Then there exist $\gamma_0=\gamma_0(d,\kappa,\alpha,p) \in (0,1)$ and $\lambda_0=\lambda_0(d,\kappa,\alpha, p)\ge 0$ such that the following assertions hold. Suppose that \eqref{ellipticity}  and \textup{Assumption \ref{assump1} ($\gamma_0, R_0$)} are satisfied. If $u\in \cH^1_p(\Omega_T,\mu)$ is a weak solution of
\eqref{eq3.23} for some $\lambda\ge \lambda_0 R_0^{-2}$, $f\in L_p(\Omega_T,\mu)$, and $F \in L_p(\Omega_T, \mu)^d$,  then we have
\begin{equation} \label{main-thm-est}
\|D u\|_{L_p(\Omega_T,\mu)}+\sqrt{\lambda} \|u\|_{L_p(\Omega_T,\mu)}
\leq N \|F\|_{L_p(\Omega_T, \mu)} + N\|f\|_{L_p(\Omega_T,\mu)},
\end{equation}
where $N=N(d,\kappa,\alpha,p)>0$. Moreover, for any $\lambda > \lambda_0 R_0^{-2}$, $f\in L_p(\Omega_T,\mu)$, and $F \in L_p(\Omega_T,\mu)^d$, there exists a unique weak solution $u\in \cH^1_p(\Omega_T, \mu)$ to \eqref{eq3.23}.
\end{theorem}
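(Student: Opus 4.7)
The plan is to follow the template of the mean-oscillation / Fefferman--Stein approach of \cite{Krylov} together with the DK11 device of estimating the pair $(D_{x'}u,\cU)$ with $\cU:=a_{dj}D_j u$ rather than the full gradient, adapted to the weight $x_d^\alpha$ for $\alpha\in(-1,\infty)$. I would first reduce the theorem to an a priori estimate, and then obtain that estimate by freezing the coefficients in $(t,x')$ so that the resulting operator depends only on $x_d$. This splits the proof into two tasks: (i) a ``simple coefficient'' theory in which solvability and sharp pointwise mean-oscillation estimates are proved in $\cH^1_p$, and (ii) a perturbation step exploiting Assumption \ref{assump1}$(\gamma_0,R_0)$ to treat partially VMO coefficients. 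The $L_2$ existence needed to initiate everything is already supplied (see the reference to the $L_2$ existence result in Section \ref{inq-sec}), so the issue is purely regularity and the passage from $p=2$ to general $p\in(1,\infty)$.

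For the simple coefficient problem (coefficients $a_{ij}=a_{ij}(x_d)$), I would argue as follows. Given a weak solution $u$ of the homogeneous equation in $Q_{2R}^+$ with vanishing conormal data, differentiate in the tangential directions $x'$ and in $t$, noting that these derivatives still solve an equation of the same form because the coefficients are independent of $(t,x')$. Hence $D_{x'}^k\partial_t^m u$ satisfies the same equation, and one can apply a Moser iteration to the system satisfied by $u$ (this is the scalar-only step alluded to after \eqref{main-eqn}) to obtain an $L_\infty$ bound. The weighted Caccioppoli and Sobolev-type inequalities required for Moser are available because of the embedding Lemma \ref{lem2.2} that the authors prove in Section \ref{inq-sec} via the generalized Hardy--Littlewood--Sobolev inequality of \cite{HL28}, so that the whole argument avoids the weighted Poincar\'e inequality (which fails for $\alpha\ge1$). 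Once $D_{x'}^k\partial_t^m u$ are pointwise controlled, a bootstrap using the equation itself rewrites $D_d u$ in terms of $\cU/a_{dd}$ minus a combination of $D_{x'}u$'s, yielding $L_\infty$ control of $\cU$ as well; at this stage one can produce a Lipschitz / Schauder-type decay of mean oscillations of $(D_{x'}u,\cU)$ at every scale, which is the key quantitative ingredient.

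Next I would run the standard mean-oscillation machinery on general partially VMO coefficients. For each point $z_0=(t_0,x_0)$ and each scale $r\le R_0$, freeze the coefficients at $[a_{ij}]_{r,z_0}(x_d)$ and split $u=v+w$, where $v$ solves the frozen-coefficient equation with the same boundary data on $Q^+_r(z_0)$ and $w=u-v$ absorbs the perturbation. The simple-coefficient estimate applied to $v$ gives the desired oscillation decay, while the perturbation $w$ is controlled in $L_p(\mu)$ by the $L_2(\mu)$ solvability combined with H\"older's inequality against the smallness parameter $\gamma_0$ from Assumption \ref{assump1}, plus the $L_p(\mu)$ norms of $F$ and $f$. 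This yields a pointwise bound for the sharp function $(Du)^\#$ and, through the Fefferman--Stein theorem in the weighted setting (applicable because $x_d^\alpha dz$ is doubling on upper half-cylinders), the a priori estimate \eqref{main-thm-est} for $\lambda\ge\lambda_0 R_0^{-2}$ with $\lambda_0$ large.

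Finally, uniqueness follows directly from \eqref{main-thm-est} applied to the difference of two solutions with $F=0$ and $f=0$. For existence I would use the method of continuity: connect the given operator to a model operator (say $-D_i[x_d^\alpha(\delta_{ij}D_j u-F_i)]+\lambda x_d^\alpha u$), for which existence in $\cH^1_p(\Omega_T,\mu)$ is part of the simple-coefficient theory, and interpolate using the a priori estimate \eqref{main-thm-est} along the family $L_s=(1-s)L_0+sL$, $s\in[0,1]$. The main obstacle I anticipate is the simple-coefficient step: for $\alpha\ge 1$ the weight leaves the $A_2$ class, so the weighted Poincar\'e inequality is unavailable, and the Moser iteration has to be carried out using only Lemma \ref{lem2.2} and the inhomogeneous Sobolev inequality, while simultaneously tracking both $D_{x'}u$ and $\cU$ through the bootstrap in order to recover control of $D_d u$ at the boundary $\{x_d=0\}$ where the degeneracy/singularity concentrates.
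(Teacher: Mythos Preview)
Your overall strategy matches the paper's: freeze coefficients in $(t,x')$, develop Lipschitz/Schauder estimates for the resulting $x_d$-dependent operator via Moser iteration plus a bootstrap on $\cU=\bar a_{dj}D_j u$, then perturb using Assumption~\ref{assump1} and close with a real-variable argument and the method of continuity. A few points are imprecise. First, the Moser iteration (Lemma~\ref{lem2}) does \emph{not} use the parabolic embedding Lemma~\ref{lem2.2}; that lemma requires control of $u_t$ in $\bH^{-1}_q$, which is unavailable for powers $w^{\beta/2}$. What is used instead is the energy-space embedding Lemma~\ref{Sobolev-imbed}, which needs only $L_\infty_tL_2(\mu)_x\cap L_2_tW^1_2(\mu)_x$ control and rests on the purely spatial weighted Sobolev inequality (Remark~\ref{imbd-remark}(ii)). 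Lemma~\ref{lem2.2} enters only in Corollary~\ref{main-thrm}.

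Second, in the decomposition step the paper does not solve a local frozen-coefficient problem with ``the same boundary data''; rather (Proposition~\ref{G-approx-propos}) it defines the \emph{bad} piece $v$ globally on $\Omega_T$ via the $L_2$ solvability Lemma~\ref{L-2-lemma}, with right-hand side equal to $(a_{ij}-[a_{ij}]_{2r,z_0})D_ju$ plus $F,f$ localized to $Q_{2r}^+(z_0)$, so that $w=u-v$ solves the homogeneous frozen equation locally. The subsequent real-variable step in the paper is the $L_\infty$-vs-small-$L_2$ level-set argument of \cite{DK11b}, not the Fefferman--Stein sharp-function route you describe; your route is the one used later for the mixed-norm Theorem~\ref{thm1.4} and would also work here, so this is a genuine but harmless difference. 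Finally, two ingredients are missing from your sketch: the case $p\in(1,2)$ is obtained by duality (as in the proof of Theorem~\ref{thm3}), and the a~priori estimate is first proved under the auxiliary assumption $\supp u\subset(s-(R_0r_0)^2,s+(R_0r_0)^2)\times\bR^d_+$ and then globalized by a partition of unity in $t$, which is precisely where $\lambda_0$ is determined.
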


In the next result, we give a local boundary estimate in a half cylinder. Consider
 \begin{equation} \label{eqn-entire}
\left\{
\begin{aligned}
x_d^\alpha u_t- D_i\big(x_d^\alpha (a_{ij} D_j u - F_i)\big)  & =  x_d^\alpha f\\
\lim_{x_d \rightarrow 0^+}x_d^\alpha (a_{dj} D_j u - F_d)  &= 0
\end{aligned}\right.  \quad \text{in} \  Q_2^+.
\end{equation}
Let $p  \in [1,\infty)$ and $p^*\in [1,p)$ satisfy
\begin{equation}
                                \label{eq3.19}
\begin{cases}
(d+2+\alpha_+)/p^*\le 1+(d+2+\alpha_+)/p\quad\text{when}\ p^*>1\\
(d+2+\alpha_+)/p^*< 1+(d+2+\alpha_+)/p\quad\text{when}\ p^*=1,
\end{cases}
\end{equation}
if $d \geq 2$ or $\alpha=0$, and
\begin{equation}
                                \label{eq3-2.19}
\begin{cases}
(4+\alpha_+)/p^*\le 1+(4+\alpha_+)/p\quad\text{when}\ p^*>1\\
(4+\alpha_+)/p^*< 1+(4+\alpha_+)/p\quad\text{when}\ p^*=1,
\end{cases}
\end{equation}
if $d =1$  and $\alpha\neq 0$. Note that the condition on $p^*$ is used in a weighted parabolic Sobolev embedding result.  See Lemma \ref{lem2.2} below.

\begin{corollary} \label{main-thrm}
Let $\alpha \in (-1, \infty)$, $\kappa \in (0,1)$, $R_0\in (0,\infty)$,  $1<p_0<p<\infty$, and $p^*\in [1,p)$ satisfy \eqref{eq3.19}-\eqref{eq3-2.19}. Then there exists $\gamma_0=\gamma_0(d,\kappa,\alpha,p_0,p) \in (0,1)$ such that the following assertion holds.  Suppose that \eqref{ellipticity} and  \textup{Assumption} \ref{assump1} ($\gamma_0, R_0$) are satisfied. If $u\in \cH^1_{p_0}(Q_2^+, \mu)$ is a weak solution of \eqref{eqn-entire}, $F\in L_p(Q_2^+,\mu)^{d}$, and $f\in L_{p^*}(Q_2^+, \mu)$, then $u \in \cH_p^1(Q_1^+, \mu)$ and
\begin{align} \label{main-thm-estb}
& \|u\|_{L_p(Q_{1}^+, \mu)}+ \|D u\|_{L_p(Q_{1}^+, \mu)} \nonumber\\
&\leq  N \|u\|_{L_1(Q_2^+, \mu)} + N \|Du\|_{L_1(Q_2^+, \mu)} + N\|F\|_{L_p(Q_2^+,\mu)} + N\|f\|_{L_{p^*}(Q_2^+, \mu)},
\end{align}
where $N=N(d,\kappa,\alpha,p_0,p, p^*, R_0)>0$.
\end{corollary}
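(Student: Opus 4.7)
The plan is a cutoff reduction to Theorem \ref{thm2} combined with a finite bootstrap in the integrability exponent, with the weighted parabolic Sobolev embedding Lemma \ref{lem2.2} providing both the gain in integrability and the absorption of the $f \in L_{p^*}$ forcing. I would fix $1 \leq r < R \leq 2$ and a smooth cutoff $\eta$ with $\eta \equiv 1$ on $Q_r^+$, $\supp \eta \subset Q_R^+$, and $|D\eta|+|\eta_t|^{1/2}\leq N/(R-r)$. Setting $v = u\eta$ (extended by zero to $\Omega_T$), a direct computation shows that $v$ weakly solves
\begin{equation*}
x_d^\alpha v_t - D_i\bigl[x_d^\alpha(a_{ij}D_jv - \tilde F_i)\bigr] = x_d^\alpha \tilde f \quad \text{in } \Omega_T,
\end{equation*}
with $\tilde F_i = \eta F_i + a_{ij}u\,D_j\eta$ and $\tilde f = u\eta_t + F_i D_i\eta + \eta f - a_{ij}D_j u\,D_i\eta$, plus the conormal condition. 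Adding $\lambda x_d^\alpha v$ to both sides with $\lambda > \lambda_0 R_0^{-2}$, and choosing the Corollary's $\gamma_0$ as the minimum of the thresholds provided by Theorem \ref{thm2} for the finitely many exponents to be used below, puts the equation into the framework of Theorem \ref{thm2}.

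The forcing is not a priori in $L_p(\Omega_T, \mu)$: $\eta f$ lies only in $L_{p^*}$, while the terms involving $u, Du$ are only in $L_{p_0}$. The condition \eqref{eq3.19}--\eqref{eq3-2.19} is precisely the exponent relationship for the weighted parabolic Sobolev embedding $\cH^1_{p^*} \hookrightarrow L_p$ of Lemma \ref{lem2.2}; its dual form $L_{p^*} \hookrightarrow \bH^{-1}_p$ provides a representation $\eta f = D_i G_i + x_d^{-1} G_0$ with $G \in L_p(\Omega_T, \mu)$ and $\|G\|_{L_p} \leq N\|f\|_{L_{p^*}(Q_R^+, \mu)}$, after which $G_i$ is merged into $\tilde F_i$ and $G_0$ into the $\bH^{-1}_p$ slot, making the $f$-contribution admissible. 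For the $u, Du$ pieces, I would bootstrap along a finite chain of exponents $p_0 < q_1 < \cdots < q_K \geq p$ and nested radii $R > R_1 > \cdots > R_K \geq r$, using Lemma \ref{lem2.2}'s embedding $\cH^1_{q_k}(Q_{R_k}^+) \hookrightarrow L_{q_{k+1}}(Q_{R_k}^+)$ to raise the integrability of $u$ at each step before applying Theorem \ref{thm2} at exponent $q_{k+1}$ on the next cylinder. The delicate term $a_{ij}D_j u\,D_i\eta$ in $\tilde f$ contributes a factor $\lambda^{-1/2}(R-r)^{-1}$ on the right-hand side of Theorem \ref{thm2}'s estimate, which becomes a small coefficient of the target $L_p$-norm when $\lambda$ is taken large relative to $(R-r)^{-2}$ and can therefore be absorbed on the left.

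Combining these steps yields, for all $1 \leq r < R \leq 2$,
\begin{equation*}
\|u\|_{L_p(Q_r^+)} + \|Du\|_{L_p(Q_r^+)} \leq \frac{N}{(R-r)^\sigma}\bigl(\|u\|_{L_{p_0}(Q_R^+)} + \|Du\|_{L_{p_0}(Q_R^+)} + \|F\|_{L_p(Q_R^+)} + \|f\|_{L_{p^*}(Q_R^+)}\bigr).
\end{equation*}
To upgrade the $L_{p_0}$-norms on the right to $L_1$-norms, I would invoke the interpolation inequality $\|g\|_{L_{p_0}(Q_R^+)} \leq \epsilon \|g\|_{L_p(Q_R^+)} + C_\epsilon \|g\|_{L_1(Q_R^+)}$ on the finite-measure domain $Q_R^+$ and apply the standard Giaquinta--Giusti iteration lemma on the family of radii $r < R$ to absorb the resulting $L_p$-term. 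Setting $r = 1$, $R = 2$ then gives \eqref{main-thm-estb}. The hardest part will be the integrability bootstrap: the absorption of $a_{ij}D_j u\,D_i\eta$ requires careful coordination between the $\sqrt\lambda$-scaling built into Theorem \ref{thm2}'s estimate and the Sobolev gaps furnished by Lemma \ref{lem2.2} at each stage.
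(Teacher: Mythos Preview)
Your cutoff reduction and the final interpolation/iteration step mirror the paper's. The gap is in how you propose to gain integrability, both for the forcing $\eta f\in L_{p^*}$ and for the cross term $a_{ij}D_ju\,D_i\eta$ sitting in $\tilde f$.

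The claimed representation $\eta f = D_i G_i + x_d^{-1}G_0$ with $G\in L_p(\Omega_T,\mu)$ does not follow from the dual of Lemma \ref{lem2.2}. A spatial-only divergence representation can at best raise the \emph{spatial} integrability of $G$ relative to $\eta f$; it cannot raise the time integrability. For fixed $t$ one would get $\|G(t,\cdot)\|_{L_p(B_2^+,\mu)}\le N\|\eta f(t,\cdot)\|_{L_{p^*}(B_2^+,\mu)}$, and taking the $L_p$-norm in $t$ lands you in the mixed norm $L_p((-4,0);L_{p^*}(B_2^+,\mu))$, which is not controlled by $\|\eta f\|_{L_{p^*}(Q_2^+,\mu)}$. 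Lemma \ref{lem2.2} gains in $t$ and $x$ simultaneously precisely because it uses $v_t\in\bH^{-1}_q$; a genuine dual of it would therefore produce a time-derivative piece, and that does not fit the forcing structure $D_i(x_d^\alpha F_i)+\sqrt\lambda\,x_d^\alpha f$ that Theorem \ref{thm2} accepts. The same obstruction blocks your bootstrap for $Du$: Lemma \ref{lem2.2} raises the integrability of $u$, not of $Du$, so after one step $a_{ij}D_ju\,D_i\eta$ is still only in $L_{q_k}$ and you cannot invoke Theorem \ref{thm2} at the higher exponent $q_{k+1}$. The $\lambda^{-1/2}$ factor allows absorption of this term at a \emph{fixed} exponent, but it does not permit the step-up.

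The paper resolves both issues at once by duality. With $q=p/(p-1)$ and $q^*=p^*/(p^*-1)$, it solves the adjoint problem \eqref{eq6.25} for test data $G,g\in C_0^\infty(Q_1^+)$, obtaining $v\in\cH^1_q$ via Theorem \ref{thm2}, and then applies Lemma \ref{lem2.2} \emph{to $v$} (not to $f$) to get $\|v\|_{L_{q^*}(Q_2^+,\mu)}\le N\sqrt\lambda$. Pairing \eqref{eq6.16} against $v$ gives
\[
\int_{Q_1^+}(\nabla u\cdot G+\sqrt\lambda\, ug)\,d\mu = \int_{Q_2^+}(\nabla v\cdot\tilde F+v\tilde f)\,d\mu,
\]
and the right-hand side is bounded by $\|Dv\|_{L_q}\|\tilde F\|_{L_p}+\|v\|_{L_{q^*}}\|\tilde f\|_{L_{p^*}}$. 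Taking the supremum over $G,g$ yields \eqref{eq2.57} directly, with $\|u\|_{L_{p^*}}$ and $\|Du\|_{L_{p^*}}$ on the right; since $p^*<p$, the closing H\"older/iteration to $L_1$ then proceeds as you describe.
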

We conjecture that for any $d\ge 1$ and $\alpha\in (-1,\infty)$, the above corollary still holds when $p^*$ satisfies \eqref{eq3.19}.

In this paper, we also show that Theorem \ref{thm2} can be extended
to the setting of weighted mixed-norm spaces.  The result is of interest because the inhomogeneous terms $F$ and $f$ could behave anisotropically. For $p \in (1, \infty)$, a locally integrable function $\omega : \bR^d_+ \rightarrow \bR_+$ is said to be in $A_p(\bR^d_+, \mu)$ Muckenhoupt class of weights if \begin{equation*}
[\omega]_{A_p(\bR^d_+, \mu)}: =
\sup_{r >0,x \in \overline{\bR^d_+}} \Big(\fint_{B^+_r(x)} \omega(y)\, \mu(dy) \Big)\Big(\fint_{B^+_r(x)} \omega(y)^{\frac{1}{1-p}}\, \mu(dy) \Big)^{p-1}<\infty.
\end{equation*}
Similarly, a locally integrable function $\omega : \bR \rightarrow \bR_+$ is said to be in $A_p(\bR)$ Muckenhoupt class of weights if
\begin{equation*}
[\omega]_{A_p(\bR)}: =
\sup_{r >0,t \in \bR}
\Big(\fint_{t-r^2}^{t+r^2} \omega(s)\, ds \Big)\Big(\fint_{t-r^2}^{t+r^2} \omega(s)^{\frac{1}{1-p}}\, ds \Big)^{p-1}<\infty.
\end{equation*}
\begin{theorem} \label{thm1.4}
Let $\alpha \in (-1, \infty)$, $\kappa \in (0,1)$,  $R_0 \in (0, \infty)$, $p, q, K \in (1,\infty)$, $\omega_0\in A_q(\bR)$, $\omega_1\in A_p(\bR^d_+, \mu)$, and $\omega=\omega_0(t)\omega_1(x)$, such that
$$
 [\omega_0]_{A_q(\bR)}\le K,\quad [\omega_1]_{A_p(\bR^d_+, \mu)}\le K.
$$
Then there exist
$$
\gamma_0=\gamma_0(d, \kappa, \alpha, p, q,  K)\in (0,1) \quad \text{and}\quad
\lambda_0=\gamma_0(d, \kappa, \alpha, p, q, K)\ge 0,
$$
such that the following assertions hold. Suppose that \eqref{ellipticity} and \textup{Assumption \ref{assump1} ($\gamma_0, R_0$)} are satisfied. If $u\in \cH^1_{q,p}(\Omega_T,\omega d\mu)$ is a weak solution of
\eqref{eq3.23} for some $\lambda\ge \lambda_0 R_0^{-2}$, $f\in L_{q,p}(\Omega_T,\omega\,d\mu)$, and $F \in L_{q,p}(\Omega_T,\omega\, d\mu)^d$, then we have
\begin{align} \label{main-thm-estc}
&\|D u\|_{L_{q,p}(\Omega_T,\omega\,d\mu)}+\sqrt{\lambda} \|u\|_{L_{q,p}(\Omega_T,\omega\,d\mu)}\nonumber\\
&\leq N \|F\|_{L_{q,p}(\Omega_T,\omega\,d\mu)} + N\|f\|_{L_{q,p}(\Omega_T,\omega\,d\mu)},
\end{align}
where $N=N(d, \kappa, \alpha, p, q,  K)>0$.
Moreover, for any $\lambda > \lambda_0 R_0^{-2}$, $f\in L_{q, p}(\Omega_T, \omega d\mu)$, and $F \in L_{q,p}(\Omega_T, \omega d\mu)^d$,  there exists a unique weak solution  $u\in \cH^1_{q, p}(\Omega_T, \omega d\mu)$ to \eqref{eq3.23}.
\end{theorem}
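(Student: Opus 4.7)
The plan is to follow the standard mean-oscillation / sharp-function approach of Krylov, and reduce Theorem \ref{thm1.4} to weighted mixed-norm Hardy--Littlewood and Fefferman--Stein inequalities on the space of homogeneous type $(\Omega_T,\,dt\otimes x_d^\alpha dx)$ with parabolic upper half-cylinders playing the role of balls. In effect, the weighted mixed-norm estimate \eqref{main-thm-estc} should follow from the same pointwise sharp-function bound that underlies Theorem \ref{thm2}, combined with the correct weighted maximal inequalities in the mixed-norm setting.

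First, I would reuse the pointwise sharp-function bound produced while proving Theorem \ref{thm2}. Adapting the argument of \cite{DK11} to the singular/degenerate setting---that is, freezing the leading coefficients in $(t,x')$, invoking the a priori Lipschitz and Schauder-type bounds for equations with coefficients depending only on $x_d$ from Section \ref{div-sim-sec}, and using Assumption \ref{assump1}$(\gamma_0,R_0)$ to absorb the perturbation---one expects a pointwise inequality of the form
\[
\bigl(|Du|+\sqrt{\lambda}\,|u|\bigr)^{\#}(z_0)\le N\gamma_0^{\beta}\,\cM\!\bigl(|Du|^{p_0}+\lambda^{p_0/2}|u|^{p_0}\bigr)^{1/p_0}(z_0)+N\,\cM\!\bigl(|F|^{p_0}+|f|^{p_0}\bigr)^{1/p_0}(z_0)
\]
for a.e.\ $z_0\in\Omega_T$, with some $\beta>0$, any fixed $p_0\in(1,\min(p,q))$ chosen below, and $\cM$ the Hardy--Littlewood maximal function with respect to parabolic upper half-cylinders and $\mu$. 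This bound is exactly what the argument for \eqref{main-thm-est} yields at the mean-oscillation level, and it is independent of the ambient norm in which one subsequently measures both sides.

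Second, by self-improvement of the Muckenhoupt classes, the assumption $[\omega_0]_{A_q(\bR)}\le K$ and $[\omega_1]_{A_p(\bR^d_+,\mu)}\le K$ furnishes $p_0=p_0(d,\alpha,p,q,K)\in(1,\min(p,q))$ with $\omega_0\in A_{q/p_0}(\bR)$ and $\omega_1\in A_{p/p_0}(\bR^d_+,\mu)$ together with quantitative control of the new characteristics by a function of $K$. The weighted mixed-norm boundedness of $\cM$ and the weighted Fefferman--Stein inequality $\|g\|_{L_{q,p}(\omega d\mu)}\le N\|g^{\#}\|_{L_{q,p}(\omega d\mu)}$---both standard on spaces of homogeneous type for product weights, see \cite{Krylov,MR2540989} and the applications in \cite{Dong-Phan,D-P}---then promote the pointwise inequality above to an $L_{q,p}(\omega d\mu)$ bound. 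Choosing $\gamma_0$ small to absorb $N\gamma_0^{\beta}$ on the left, and taking $\lambda\ge \lambda_0 R_0^{-2}$ with $\lambda_0$ large to absorb the $\sqrt{\lambda}\,u$ contribution, one concludes \eqref{main-thm-estc}.

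Finally, I would obtain existence and uniqueness at $\lambda>\lambda_0R_0^{-2}$ by the method of continuity connecting $a_{ij}$ to $\delta_{ij}$: the a priori bound \eqref{main-thm-estc} is stable along the continuation, and solvability at the endpoint for general $F,f\in L_{q,p}(\Omega_T,\omega d\mu)$ follows by density, approximating $F,f$ by functions that also lie in $L_{p_0}(\Omega_T,\mu)$, applying Theorem \ref{thm2} to produce solutions $u_n\in\cH^1_{p_0}(\Omega_T,\mu)$, and using \eqref{main-thm-estc} to pass to the $\cH^1_{q,p}(\Omega_T,\omega d\mu)$ limit. Uniqueness is immediate from \eqref{main-thm-estc} applied to the difference of two solutions. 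The main technical obstacle is the first step: genuinely producing a pointwise sharp-function bound that handles the singular zeroth-order inhomogeneity $F_0/x_d$ appearing in $\bH^{-1}_{q,p}$ (which is where the weighted parabolic embedding of Lemma \ref{lem2.2} enters), while keeping the constants dependent only on $(d,\kappa,\alpha,p,q,K)$ so that the Muckenhoupt machinery applies uniformly and the self-improvement step above is quantitative.
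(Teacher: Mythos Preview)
Your overall strategy---mean oscillation estimates followed by weighted mixed-norm Fefferman--Stein and Hardy--Littlewood maximal inequalities, with $p_0$ chosen via the self-improvement of $A_p$ weights---is exactly the paper's route (it cites \cite{MR3812104} for this machinery). However, the pointwise sharp-function bound you wrote down is not the one that actually holds, and this is the crux of the partially VMO situation.

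Because the coefficients are merely measurable in $x_d$, the Schauder estimate of Proposition \ref{prop1} controls the $C^{1/2,1}$ semi-norm of $D_{x'}u$, of $\mathcal U:=a_{dj}D_j u$, and of $\sqrt\lambda\,u$, but \emph{not} of $D_d u$ itself. Accordingly, the mean-oscillation estimate one can prove (and the one the paper states) is for the vector $(D_{x'}u,\mathcal U,\sqrt\lambda\,u)$, not for $|Du|+\sqrt\lambda|u|$. One applies Fefferman--Stein to this vector and only afterwards recovers $D_d u$ from the pointwise relation $a_{dd}D_d u=\mathcal U-\sum_{j<d}a_{dj}D_j u$ and ellipticity. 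Your displayed inequality with $(|Du|+\sqrt\lambda|u|)^{\#}$ on the left would fail in general: the oscillation of $D_d u$ picks up the (uncontrolled) oscillation in $x_d$ of the frozen coefficients $\bar a_{dj}$. This is precisely the difficulty the introduction flags and the reason the $\mathcal U$ device from \cite{DK11} is used.

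Two smaller points. First, the paper's mean-oscillation estimate also carries a term of size $r_0^{\,2(1-1/q_0)}$ coming from restricting $u$ to a time slab of width $\sim R_0 r_0$; the dependence on $R_0$ and the threshold $\lambda\ge\lambda_0 R_0^{-2}$ arise only after the partition-of-unity-in-$t$ step that removes this support restriction (as in the proof of Theorem \ref{thm2}). In your sketch no term requires $\lambda$ large, so you should make explicit where $\lambda_0$ actually enters. Second, there is no $F_0/x_d$ inhomogeneity in equation \eqref{eq3.23}; that term appears only in the definition of $\bH^{-1}_{q,p}$ to accommodate $u_t$. Lemma \ref{lem2.2} enters only indirectly, via Corollary \ref{main-thrm}, to justify the Schauder bound \eqref{eq5.52} for homogeneous solutions that a priori lie merely in $\cH^1_{p_0}$ with $p_0<2$---which is exactly the regularity available after the $A_p$ self-improvement you invoke.
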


Next, we state the main results for  non-divergence form equations. Besides the regularity assumption on $(a_{ij})$ as in Assumption \ref{assump1}, we impose  similar conditions on the coefficients $a_0$ and $c_0$.
\begin{assumption}[$\gamma_0, R_0$]
            \label{assump2}
For any $r\in (0, R_0]$ and $z_0=(z_0',x_d) \in \bR\times \overline{\bR^d_+}$, 
we have
\[
\begin{split}
&  \sup_{i,j}\fint_{Q^+_{r}(z_0)} |a_{ij}(t,x) - [a_{ij}]_{r,z_0}(x_d)|\, \mu(dz)\\
& + \fint_{Q^+_{r}(z_0)}\Big( |a_{0}(t,x) - [a_0]_{r, z_0}(x_d)| +  |c_{0}(t,x) - [c_0]_{r,z_0}(x_d)|  \Big) \, \mu(dz) \le \gamma_0,
\end{split}
\]
where $[a_{ij}]_{r,z_0}(x_d)$, $[a_0]_{r, z_0}(x_d)$, and $[c_0]_{r,z_0}(x_d)$ are respectively the average of $a_{ij}$, $a_0$, and $c_0$ with respect to $(t, x')$ in $Q'_{r}(z_0')$ as defined in Assumption \ref{assump1}.
\end{assumption}
We also need the following definition which is used in a weighted Hardy inequality (cf. \cite[Lemma 2.2]{D-P}).
\begin{definition} Let $\alpha \in (-1,\infty)$ and $p \in (1,\infty)$, we say that the weight $\omega: \bR_+ \rightarrow \bR_+$ is in $M_{p}(\mu)$ if
\[
[\omega]_{M_{p}(\mu)} = \sup_{r>0} \left(\int_r^\infty y^{- p(\alpha +1)} \omega(y) \,\mu(dy) \right)^{\frac 1 p} \left( \int_0^r \omega(y)^{-\frac{1}{p-1}} \,\mu(dy) \right)^{1-\frac{1}{p}} < \infty,
\]
where $\mu(dy) = y^\alpha\ dy$ for $y \in \bR_+$.
\end{definition}

Define $W^{1,2}_{q,p}(\Omega_T, \omega\, d\mu)$ to be the weighted mixed-norm Sobolev space equipped with the norm
\[
\begin{split}
\norm{u}_{W^{1,2}_{q, p} (\Omega_T, \omega\, d\mu)} & = \norm{u}_{L_{q, p}(\Omega_T, \omega\,d\mu)} + \norm{u_t}_{L_{q, p} (\Omega_T, \omega\, d\mu)} \\
& \qquad + \norm{Du}_{L_{q, p} (\Omega_T, \omega\, d\mu)} + \norm{D^2u}_{L_{q, p}(\Omega_T, \omega\, d\mu)}.
\end{split}
\]
When $p =q$ and $\omega \equiv 1$, we write $W^{1,2}_p(\Omega_T, \mu) = W^{1,2}_{p,p}(\Omega_T, d\mu)$. A function $u \in W^{1,2}_{q,p}(\Omega_T,  {\omega}\ d\mu)$ is said to be a strong solution to \eqref{non-div.eqn} if it satisfies the equation almost everywhere.  Our main result for the non-divergence form equation \eqref{non-div.eqn}--\eqref{non-div.bry} is the following theorem.
\begin{theorem}  \label{para.theorem} Let $\alpha \in (-1,\infty)$, $\kappa \in (0,1)$, $R_0 \in (0, \infty)$, $p, q, K \in (1, \infty)$.   Let $\omega_0\in A_q(\bR)$, $\omega_1 \in A_p(\bR^{d-1})$, $\omega_2 \in A_p(\bR_+, \mu) \cap M_{p}(\mu)$, and $\omega(t,x) = \omega_0(t) \omega_1(x') \omega_2(x_d)$, such that
$$
 [\omega_0]_{A_q(\bR)}\le K,\quad
[\omega_1]_{A_p(\bR^{d-1})}\le K,\quad
[\omega_2]_{A_p(\bR_+, \mu)}\le K,\quad
[\omega_2]_{M_{p}(\mu)}\le K.
$$
Then there exist
\[
\gamma_0  = \gamma_0(d, \kappa, \alpha, p, q,  K) \in (0,1)\quad\text{and}\quad
\lambda_0  = \lambda_0(d,\kappa,\alpha, p,q, K) \geq 0
\] 
such that the following assertions hold. Suppose that \eqref{ellipticity}, \eqref{a-c-eq},  \eqref{extension-type-matrix} and \textup{Assumption \ref{assump2} ($\gamma_0, R_0)$} are satisfied.  If  $u \in W^{1,2}_{q,p}(\Omega_T,  {\omega}\ d\mu)$ is a strong solution of \eqref{non-div.eqn}-\eqref{non-div.bry} with $f \in L_{q,p}(\Omega_T, \omega\, d\mu)$ and $\lambda \ge \lambda_0 R_0^{-2}$, then
\begin{equation*} 
\begin{split}
& \norm{u_t}_{L_{q,p}(\Omega_T, \omega\, d\mu)} + \norm{D^2u}_{L_{q,p}(\Omega_T, \omega\, d\mu)} + \norm{D_du/x_d}_{L_{q,p}(\Omega_T, \omega\, d\mu)} \\
& \quad + \sqrt{\lambda} \norm{Du}_{L_{q,p}(\Omega_T, \omega\, d\mu)} + \lambda \norm{u}_{L_{q, p}(\Omega_T, \omega\, d\mu)} \leq  N \norm{f}_{L_{q,p}(\Omega_T, \omega\, d\mu)},
\end{split}
\end{equation*}
where $N=N(d,\kappa,\alpha,p,q,K)>0$. Moreover,  for any $f \in L_{q,p}(\Omega_T, \omega\, d\mu)$ and $\lambda > \lambda_0 R_0^{-2}$,  there is a unique strong solution $u \in W^{1,2}_{q,p}(\Omega_T,  {\omega}\ d\mu)$ of \eqref{non-div.eqn}-\eqref{non-div.bry}.
\end{theorem}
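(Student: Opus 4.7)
The plan is to reduce \eqref{non-div.eqn}--\eqref{non-div.bry} to a divergence-form problem covered by Theorem \ref{thm1.4}. The starting point is the \emph{hidden divergence structure} uncovered by \eqref{extension-type-matrix}: since $a_{dj}=0$ for $j<d$, one has $a_{ij}D_{ij}u=\sum_{i,j<d}a_{ij}D_{ij}u+\sum_{i<d}a_{id}D_{id}u+a_{dd}D_{dd}u$ and $(\alpha/x_d)a_{dj}D_ju=(\alpha/x_d)a_{dd}D_du$. Dividing \eqref{non-div.eqn} by $a_{dd}$, multiplying by $x_d^\alpha$, and using the identity $D_d[x_d^\alpha D_du]=x_d^\alpha D_{dd}u+\alpha x_d^{\alpha-1}D_du$, the equation becomes
\[
x_d^\alpha(a_0/a_{dd})u_t-x_d^\alpha\sum_{i,j<d}\tfrac{a_{ij}}{a_{dd}}D_{ij}u-x_d^\alpha\sum_{i<d}\tfrac{a_{id}}{a_{dd}}D_{id}u-D_d[x_d^\alpha D_du]+\lambda x_d^\alpha(c_0/a_{dd})u=x_d^\alpha f/a_{dd},
\]
and \eqref{non-div.bry} reduces to the conormal condition $\lim_{x_d\to 0^+}x_d^\alpha D_du=0$.

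When the coefficients depend only on $x_d$, the ratios $a_{ij}/a_{dd}$ are independent of $x'$, so each mixed second-order term integrates by parts inside a single $x'$-divergence to yield a genuine divergence-form equation
\[
x_d^\alpha(a_0/a_{dd})u_t-D_i[x_d^\alpha\tilde a_{ij}D_ju]+\lambda x_d^\alpha(c_0/a_{dd})u=x_d^\alpha f/a_{dd},
\]
where $(\tilde a_{ij})$ still satisfies \eqref{ellipticity} (with a constant depending only on $\kappa$), $\tilde a_{dj}=0$ for $j<d$, and $\tilde a_{dd}=1$. Theorem \ref{thm1.4} then controls $Du$ in $L_{q,p}(\Omega_T,\omega\,d\mu)$. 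To obtain $D^2u$, I would differentiate the equation in a tangential direction $x_k$, $k<d$; since the coefficients are independent of $x_k$, the derivative $v=D_ku$ solves an equation of the same divergence form with inhomogeneous data in $\bH_{q,p}^{-1}$ built from the already controlled $Du$ and from $f$, and the conormal condition passes to $v$. A second application of Theorem \ref{thm1.4} yields the $L_{q,p}$-bound on $D_{x'}Du$. The remaining second derivative $D_{dd}u$ and the time derivative $u_t$ are then read off from \eqref{non-div.eqn} itself, while the singular term $D_du/x_d$ is estimated via the weighted Hardy inequality of \cite[Lemma 2.2]{D-P}, which is precisely why the hypothesis $\omega_2\in M_p(\mu)$ is imposed.

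For general coefficients satisfying Assumption \ref{assump2}, I would run the standard mean-oscillation/perturbation scheme: on each parabolic cylinder $Q_r^+(z_0)$ (including those touching $\{x_d=0\}$), freeze the coefficients in $(t,x')$ to produce an auxiliary equation with $x_d$-only coefficients, apply the previous step to the auxiliary solution, and bound the sharp function of $(u_t,D^2u,D_du/x_d)$ by $\gamma_0$ times a suitable maximal function plus a term in $f$. The weighted mixed-norm estimate then follows from a generalized Fefferman--Stein theorem with Muckenhoupt weights on $\bR$ (for $\omega_0$) and on $\bR^d_+$ with measure $\mu$ (for $\omega_1\otimes\omega_2$), provided $\gamma_0$ is small and $\lambda R_0^2$ is sufficiently large. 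Uniqueness follows from the a priori estimate; existence follows by the method of continuity, using the $L_2$-solvability of Section \ref{inq-sec} as the starting point and a model constant-coefficient problem as the endpoint.

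The main obstacle is the control of $D_du/x_d$ (and hence the recovery of $D_{dd}u$ from the equation) in the regime $\alpha\ge 1$, where $x_d^\alpha$ lies outside the $A_2$ Muckenhoupt class and the usual weighted Poincar\'e inequality is unavailable. The $M_p(\mu)$ hypothesis on $\omega_2$ is precisely the surrogate making the weighted Hardy inequality work in this range. A second delicate point is ensuring that the freezing procedure preserves both the ellipticity constants and the structural relation $\tilde a_{dj}=0$ for $j<d$; this is automatic thanks to \eqref{extension-type-matrix} combined with the lower bound $a_{dd}\ge\kappa$ from \eqref{ellipticity}, and it is what makes the passage between divergence and non-divergence form close.
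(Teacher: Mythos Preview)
Your overall strategy---exploit the hidden divergence structure via \eqref{extension-type-matrix}, divide by $a_{dd}$, invoke the weighted Hardy inequality through $\omega_2\in M_p(\mu)$, and run a freezing/mean-oscillation scheme---is exactly what the paper intends: it does not give a self-contained proof but refers to \cite{D-P}, with the Lipschitz--Schauder estimates of Section~\ref{div-sim-sec} and Theorem~\ref{thm2} as the new ingredients. So the architecture is right.

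There is, however, a concrete gap in your simple-coefficient step. After two applications of the divergence result you control $Du$ and $D_{x'}Du$, and you propose to ``read off'' $u_t$ and $D_{dd}u$ from \eqref{non-div.eqn} while Hardy handles $D_du/x_d$. But the equation gives only \emph{one} relation among $u_t$, $D_{dd}u$, and $D_du/x_d$: writing $G:=D_{dd}u+(\alpha/x_d)D_du=x_d^{-\alpha}D_d(x_d^\alpha D_du)$, one has $\tilde a_0 u_t-G=\text{(controlled)}$, while Hardy yields $\|D_du/x_d\|\le N\|G\|$ and hence $\|D_{dd}u\|\le N\|G\|$. Feeding these back into the equation produces $\|u_t\|\le N\|u_t\|+\text{(controlled)}$ with no smallness in $N$, so the loop does not close. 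Differentiating in $t$ does not help either, since $f_t$ is not assumed to lie in any admissible space. A related minor point: Theorem~\ref{thm1.4} as stated has $a_0\equiv c_0\equiv 1$; the version with general $\bar a_0,\bar c_0$ that you need is precisely what Section~\ref{div-sim-sec} supplies.

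The resolution, and what the paper's reference to \cite{D-P} actually uses, is not to recover $u_t$ algebraically but to run the mean-oscillation argument directly for the non-divergence quantities $(u_t,\,DD_{x'}u,\,D_du/x_d)$. For the homogeneous frozen problem, Proposition~\ref{prop1} (and in particular \eqref{eq1.41}, \eqref{eq7.48}, \eqref{eq8.06}) already gives \emph{pointwise} bounds on $w_t$, $DD_{x'}w$, and $\cU/x_d$; these furnish the $C^{1/2,1}$ control of the ``good'' part $w$. The ``bad'' part $v$ is handled in $L_2$ via Lemma~\ref{L-2-lemma} and its tangential derivatives, after which the equation and Hardy bound $v_t$, $D_{dd}v$, $D_dv/x_d$ jointly (here the circularity is broken because the $L_2$ energy identity, i.e., testing with $v_t$ as in the second inequality of Lemma~\ref{lem1}, controls $\|v_t\|_{L_2}$ directly). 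With these pointwise and $L_2$ pieces in hand, your general-coefficient paragraph---sharp-function bounds plus weighted mixed-norm Fefferman--Stein---is correct, and existence via continuity from the model operator then follows.
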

\begin{remark} 
As a typical example, in Theorem \ref{para.theorem} we can take  the power weight $\omega_2(x_d) = x_d^\beta$. It is easily seen that for any $\beta \in (-\alpha -1, (\alpha +1) (p-1))$, we have $\omega_2 \in A_p(\bR_+, \mu) \cap M_{p}(\mu)$.  In the special case when $\alpha=0$ and $\beta\in (-1,p-1)$, a similar result was proved in \cite{DKZ16} when the coefficients are measurable in the time variable and have small mean oscillations in the spatial variables, by using a different argument.
\end{remark}

Once Lipschitz  and Schauder estimates in Section \ref{div-sim-sec} and Theorem \ref{thm2} are proved, Theorem \ref{para.theorem} can be proved  by using the same argument as in  \cite{D-P}. To keep the paper  within a reasonable length, we skip the proof of Theorem \ref{para.theorem} and refer the reader to \cite{D-P} for details.

Similarly to Corollary \ref{main-thrm}, we also obtain the following  local boundary estimate for solutions of \eqref{non-div.eqn} in $Q_2^+$.

\begin{corollary} \label{cor1.8}
Let $\alpha \in (-1, \infty)$, $\kappa \in (0,1)$, $R_0\in (0,\infty)$, and $1<p_0<p<\infty$. Then there exists $\gamma_0=\gamma_0(d,\kappa,\alpha,p_0,p) \in (0,1)$ such that the following assertion holds.  Suppose that \eqref{ellipticity}, \eqref{a-c-eq},  \eqref{extension-type-matrix}, and  \textup{Assumption} \ref{assump2} ($\gamma_0, R_0$) are satisfied. If $u\in W^{1,2}_{p_0}(Q_2^+, \mu)$ is a strong solution of
\begin{equation*} 
\left\{
\begin{aligned}
a_0u_t- a_{ij}D_{ij}u - \frac \alpha {x_d} a_{dd}D_d u+ c_0 u   &=  f\\
\lim_{x_d \rightarrow 0^+}x_d^\alpha a_{dd} D_d u  &= 0
\end{aligned} \right.
\quad \text{in}\ \ Q_2^+
\end{equation*}
and  $f\in L_p(Q_2^+,\mu)$, then  we have $u \in W_p^{1,2}(Q_1^+, \mu)$ and
\begin{align} \label{eq6.27}
\|u\|_{W^{1,2}_p(Q_{1}^+, \mu)}\leq  N \|u\|_{W^{1,2}_1(Q_2^+, \mu)} + N\|f\|_{L_p(Q_2^+,\mu)},
\end{align}
where $N=N(d,\kappa,\alpha,p_0,p, R_0)>0$.
\end{corollary}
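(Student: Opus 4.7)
The plan is to reduce Corollary \ref{cor1.8} to the global solvability result Theorem \ref{para.theorem}, applied with the trivial weight $\omega \equiv 1$, via a cut-off/localization argument, a finite bootstrap in integrability based on Lemma \ref{lem2.2}, and a Giaquinta--Giusti-type absorbing iteration in order to achieve the $W^{1,2}_1$ norm on the right-hand side of \eqref{eq6.27}. First I would verify that $\omega \equiv 1$ satisfies all hypotheses of Theorem \ref{para.theorem}: it lies trivially in $A_p$ of every space, and a direct computation shows $[1]_{M_p(\mu)}$ is a finite constant depending only on $\alpha$ and $p$, since for $\alpha>-1$ the integrals $\int_r^\infty y^{-p(\alpha+1)+\alpha}\,dy$ and $\int_0^r y^\alpha\,dy$ converge and their product is independent of $r$.

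For $1\leq r<R\leq 2$, I would then fix a smooth cut-off $\eta=\eta_{r,R}$ with $\eta\equiv 1$ on $Q_r^+$, $\supp\eta\subset Q_R^+$, and $D_d\eta\equiv 0$ in a neighborhood of $\{x_d=0\}$, satisfying $\|\partial_t\eta\|_\infty^{1/2}+\|D\eta\|_\infty+\|D^2\eta\|_\infty^{1/2}\leq N(R-r)^{-1}$. A direct computation using \eqref{non-div.eqn} and the structural hypothesis \eqref{extension-type-matrix} (which kills the singular off-diagonal terms $x_d^{-1}a_{dj}$ for $j<d$) shows that $v:=\eta u$, extended by zero to $\Omega_T$, is a global strong solution of
\[
a_0\partial_t v-a_{ij}D_{ij}v-\frac{\alpha}{x_d}a_{dd}D_d v+\tilde\lambda c_0 v=\eta f+\tilde\lambda c_0\eta u+G(\eta,u),
\]
where $\tilde\lambda=\lambda_0 R_0^{-2}$ meets the hypothesis of Theorem \ref{para.theorem} and $G(\eta,u)$ collects commutator terms supported in $\supp D\eta\cup\supp\partial_t\eta$, satisfying $|G(\eta,u)|\leq N(R-r)^{-2}(|u|+|Du|)$ (the factor $x_d^{-1}D_d\eta$ is uniformly bounded because $D_d\eta$ vanishes near $\{x_d=0\}$). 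The conormal boundary condition is preserved by the same vanishing property.

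Next I would execute a finite bootstrap. Choose radii $2>r_0>r_1>\cdots>r_M>1$ and exponents $p_0=q_0<q_1<\cdots<q_M\geq p$ such that at each step the weighted parabolic Sobolev embedding of Lemma \ref{lem2.2} upgrades $u\in W^{1,2}_{q_k}(Q_{r_k}^+,\mu)$ to $u,Du\in L_{q_{k+1}}(Q_{r_{k+1}}^+,\mu)$, the latter because $D^2u\in L_{q_k}$ and the equation controls $\partial_t(Du)$ in a negative-order space. Applying Theorem \ref{para.theorem} at exponent $q_k$ to $v_k:=\eta_{r_{k+1},r_k}u$ then promotes the integrability from $q_k$ to $q_{k+1}$ on a smaller cylinder; after $M$ steps we obtain $u\in W^{1,2}_p(Q_\rho^+,\mu)$ for some $\rho\in(1,2)$ together with a first, non-sharp estimate in terms of $\|u\|_{W^{1,2}_{p_0}(Q_2^+,\mu)}+\|f\|_{L_p(Q_2^+,\mu)}$.

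Knowing $u\in W^{1,2}_p(Q_\rho^+,\mu)$, I would reapply the cut-off estimate of the second paragraph at exponent $p$ to derive, for all $1\leq r<R\leq\rho$,
\[
\|u\|_{W^{1,2}_p(Q_r^+,\mu)}\leq \frac{N}{(R-r)^2}\bigl(\|u\|_{L_p(Q_R^+,\mu)}+\|Du\|_{L_p(Q_R^+,\mu)}\bigr)+N\|f\|_{L_p(Q_2^+,\mu)}.
\]
A weighted Gagliardo--Nirenberg-type interpolation then yields $\|u\|_{L_p(Q_R^+,\mu)}+\|Du\|_{L_p(Q_R^+,\mu)}\leq \varepsilon\|u\|_{W^{1,2}_p(Q_R^+,\mu)}+C_\varepsilon\|u\|_{W^{1,2}_1(Q_R^+,\mu)}$ for any $\varepsilon>0$; plugging this in and applying the standard Giaquinta--Giusti absorbing lemma to $\phi(r):=\|u\|_{W^{1,2}_p(Q_r^+,\mu)}$ gives \eqref{eq6.27}. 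The main obstacle I anticipate is verifying the weighted Sobolev and interpolation inequalities uniformly for $\alpha\in(-1,\infty)$, including $\alpha\geq 1$ where $x_d^\alpha$ is no longer in Muckenhoupt $A_2$ and the classical weighted Poincar\'e inequality fails; here one must rely on the generalized Hardy--Littlewood--Sobolev mechanism that underlies Lemma \ref{lem2.2}, and choose the intermediate exponents $q_k$ compatibly with \eqref{eq3.19}--\eqref{eq3-2.19} so that the bootstrap terminates in finitely many steps.
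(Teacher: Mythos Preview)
Your plan is structurally the same as the paper's proof: localize with a cutoff that is flat near $\{x_d=0\}$, apply Theorem \ref{para.theorem} with $\omega\equiv 1$, bootstrap the integrability using Lemma \ref{lem2.2}, and finish with a Giaquinta-type iteration. The paper organizes the bootstrap as a single step under the auxiliary gap condition $(d+3+\alpha_+)/p_0<1+(d+3+\alpha_+)/p$ and then inducts on a finite chain of exponents, but that is equivalent to your explicit multistep bootstrap.

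There are two places where your outline is looser than the paper and where the paper's argument supplies something you should use. First, the promotion from $q_k$ to $q_{k+1}$ does not follow simply by ``applying Theorem \ref{para.theorem} at exponent $q_k$'': Theorem \ref{para.theorem} is an a priori estimate at a fixed exponent, so it cannot by itself raise the regularity of $v_k=\eta u$. What the paper does is solve the localized equation globally at the \emph{higher} exponent (existence part of Theorem \ref{para.theorem}), then show that because the right-hand side is compactly supported the resulting $W^{1,2}_{q_{k+1}}$ solution also lies in $W^{1,2}_{q_k}$ (by the decay-at-infinity argument from Case II of the proof of Theorem \ref{thm3}), and finally invoke uniqueness at exponent $q_k$ to identify it with $\eta u$. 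This existence/uniqueness identification is the mechanism you are missing.

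Second, for the final absorbing step the paper avoids any weighted Gagliardo--Nirenberg inequality, which is indeed delicate for $\alpha\ge 1$. Instead, it keeps the $W^{1,2}_{p_0}$ norm on the right-hand side (coming from Lemma \ref{lem2.2} applied to $u$ and to $Du$, which is legitimate since $Du\in\cH^1_{p_0}$ because $u\in W^{1,2}_{p_0}$ and $\partial_t Du=D(\partial_t u)\in\bH^{-1}_{p_0}$), and then uses ordinary H\"older interpolation $\|g\|_{L_{p_0}}\le \|g\|_{L_p}^\theta\|g\|_{L_1}^{1-\theta}$ componentwise on $u,Du,D^2u,u_t$, followed by Young's inequality and the Giaquinta lemma. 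This replaces your proposed inequality $\|u\|_{L_p}+\|Du\|_{L_p}\le\varepsilon\|u\|_{W^{1,2}_p}+C_\varepsilon\|u\|_{W^{1,2}_1}$, which is not obviously available in the weighted setting, with a completely elementary one.
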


Using the above results for parabolic equations, we can directly derive similar results for elliptic equations by viewing solutions to elliptic equations as steady state solutions of the corresponding parabolic equations. See, for example, the proofs of \cite[Theorem 2.6]{Krylov} and \cite[Theorem 1.2]{D-P}. We only present here a result  of the local boundary estimate for weak solutions. Consider
\begin{equation} \label{eq.0612}
\left\{
\begin{aligned}
 - D_i(x_d^\alpha[ a_{ij}(x) D_j u - F_i]) & = x_d^\alpha f\\
 \lim_{x_d \rightarrow 0^+}x_d^\alpha (a_{dj}(x) D_j u - F_d)  &= 0
\end{aligned} \right. \quad \text{in} \  B_2^+,
\end{equation}
where $a_{ij}: B_2^+ \rightarrow \bR$, $F = (F_1, F_2,\ldots, F_d) : B_2^+  \rightarrow \bR^d$ and $f: B_2^+ \rightarrow \bR$ are given measurable functions. In this time-independent case, \eqref{ellipticity} and Assumption \ref{assump1} can be stated similarly. For each $p  \in (1,\infty)$, suppose that $\hat{p} \in [1,p)$ satisfies
\begin{equation} \label{eq1.0612}
                               \begin{cases}
(d+\alpha_+)/\hat{p} \le 1+(d + \alpha_+)/p\quad\text{when}\ \hat{p}>1,\\
(d+ \alpha_+)/\hat{p} < 1+(d+ \alpha_+)/p\quad\text{when}\ \hat{p}=1.
\end{cases}
\end{equation}
For $\Omega \subset \bR^d$, $W^{1}_{p}(\Omega, \mu)$ denotes the weighted Sobolev space consisting of all measurable functions $u: \Omega \rightarrow \bR$ such that $u, Du \in L_p(\Omega, \mu)$.

\begin{corollary}
                \label{ell-local-th}
Let $\alpha \in (-1, \infty)$, $\kappa \in (0,1)$, $R_0\in (0,\infty)$, $1<p_0<p<\infty$, and $\hat{p} \in [1,p)$ satisfy \eqref{eq1.0612}. Then there exists $\gamma_0=\gamma_0(d,\kappa,\alpha, p_0, p) \in (0,1)$ such that the following assertion holds.  Suppose that \eqref{ellipticity} and  \textup{Assumption} \ref{assump1} ($\gamma_0, R_0$) are satisfied. If $u\in W^{1}_{p_0}(B_2^+, \mu)$ is a weak solution of \eqref{eq.0612}, $F\in L_p(B_2^+,\mu)^{d}$, and $f\in L_{\hat{p}}(B_2^+, \mu)$, then $u \in W^1_{p}(B_1^+, \mu)$ and
\begin{align*}
  \|u\|_{W^1_p(B_{1}^+, \mu)} \leq  N \|u\|_{W^1_1(B_2^+, \mu)} + N\|F\|_{L_p(B_2^+,\mu)} + N\|f\|_{L_{\hat{p}}(B_2^+, \mu)},
\end{align*}
where $N=N(d,\kappa,\alpha,p_0,p, \hat{p}, R_0)>0$.
\end{corollary}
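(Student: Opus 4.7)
The plan is to reduce to the parabolic local boundary estimate Corollary~\ref{main-thrm} via the standard device of viewing $u$ as a time-independent weak solution of the associated parabolic conormal problem; see \cite[Theorem~2.6]{Krylov} and \cite[Theorem~1.2]{D-P} for the analogous reductions in the non-degenerate setting. Concretely, I would set $\tilde u(t,x) := u(x)$, $\tilde F(t,x) := F(x)$, $\tilde f(t,x) := f(x)$, and $\tilde a_{ij}(t,x) := a_{ij}(x)$ on $Q_2^+$. Testing against $\varphi \in C_0^\infty(Q_2^+ \cup (\overline{Q_2^+} \cap \{x_d=0\}))$ and using $\int \partial_t \varphi \, dt = 0$, one checks that $\tilde u \in \cH^1_{p_0}(Q_2^+, \mu)$ is a weak solution of \eqref{eqn-entire} with data $\tilde F$ and $\tilde f$. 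The time-independence preserves Assumption~\ref{assump1}$(\gamma_0,R_0)$ with the same $\gamma_0$, and all weighted norms are related to their spatial counterparts by harmless multiplicative factors, e.g.\ $\|\tilde u\|_{L_p(Q_r^+, \mu)} = (r^2)^{1/p}\|u\|_{L_p(B_r^+, \mu)}$, and likewise for $D\tilde u$, $\tilde F$, $\tilde f$.

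If $\hat p$ also satisfied the stronger parabolic condition \eqref{eq3.19}--\eqref{eq3-2.19}, Corollary~\ref{main-thrm} could be applied directly to $\tilde u$ to obtain $\tilde u \in \cH^1_p(Q_1^+, \mu)$ together with \eqref{main-thm-estb}; since $\tilde u$ is independent of $t$, slicing at a fixed time then yields the desired estimate on $B_1^+$. To recover the full range prescribed by the weaker elliptic condition \eqref{eq1.0612}, I would repeat the proof of Corollary~\ref{main-thrm} in the time-independent setting. The only place where the parabolic dimension $d+2+\alpha_+$ (rather than the elliptic $d+\alpha_+$) enters the argument is the application of the weighted parabolic Sobolev embedding Lemma~\ref{lem2.2} to absorb the contribution of $f$. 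When $\tilde f$ is independent of $t$, this step reduces to the corresponding \emph{elliptic} weighted Sobolev embedding on $B_2^+$, which permits precisely the exponent relation \eqref{eq1.0612}. The remaining ingredients---the cutoff/commutator bookkeeping, the perturbation argument based on the global solvability of Theorem~\ref{thm2} (applied in its elliptic steady-state form, which is itself obtained from Theorem~\ref{thm2} by the same reduction), and the bootstrap from integrability $p_0$ up to $p$---go through verbatim from the parabolic proof.

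The main obstacle, and indeed the only substantive point, is the aforementioned mismatch between \eqref{eq3.19}--\eqref{eq3-2.19} and \eqref{eq1.0612}: a black-box application of Corollary~\ref{main-thrm} would only yield the conclusion under the parabolic constraint on $\hat p$, and the full elliptic range is accessible only after replacing Lemma~\ref{lem2.2} by its weighted elliptic analogue in the one step where it is invoked. Once this substitution is in place, the remainder of the proof is a direct transcription of that of Corollary~\ref{main-thrm}, and combining the resulting parabolic estimate with the norm identities from the first paragraph produces exactly the bound claimed for $\|u\|_{W^1_p(B_1^+,\mu)}$.
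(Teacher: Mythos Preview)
Your proposal is correct and matches the paper's approach: the paper explicitly states that the proof of Corollary~\ref{ell-local-th} is obtained by repeating the argument for Corollary~\ref{main-thrm} with the parabolic embedding Lemma~\ref{lem2.2} replaced by the elliptic weighted Sobolev embedding recorded in Remark~\ref{imbd-remark}~(ii), which is precisely the substitution you identify as the one substantive modification. One small imprecision: in the duality step the embedding is applied to the adjoint solution $v$ (which is time-independent because the data and coefficients are), not to $\tilde f$ itself; the time-independence of $v$ is what allows the elliptic embedding with exponent relation \eqref{eq1.0612} to replace the parabolic one.
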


 The proof of  Corollary \ref{ell-local-th} is similar to that of Corollary \ref{main-thrm} by using the corresponding weighted embedding inequality. See Remark \ref{imbd-remark} (ii). Therefore, we also omit it.

\section{Weighted Sobolev inequalities and \texorpdfstring{$L_2$}{L2}-solutions} \label{inq-sec}
Our first result in this section is a weighted parabolic embedding lemma which will be used in the proof of Corollary \ref{main-thrm}. The range of $q^*$ below is optimal when $d\ge 2$. However, when $d=1$, we impose a slightly stronger condition.  In view of the classical parabolic Sobolev embedding when $\alpha=0$, we conjecture that this condition can be relaxed.

\begin{lemma}[Weighted parabolic imbedding]
                            \label{lem2.2}
Let $\alpha\in (-1,\infty)$ and $q,q^*\in (1,\infty)$ satisfy
\begin{equation}
                        \label{eq2.06}
\left\{
\begin{aligned}
(d+2+\alpha_+)/q\le 1+(d+2+\alpha_+)/q^* & \quad \text{if} \quad d \geq 2\\
(4+\alpha_+)/q\le 1+(4+\alpha_+)/q^* & \quad \text{if} \quad d =1.
\end{aligned} \right.
\end{equation}
Then for any $v\in \cH^1_q(Q_2^+,\mu)$, we have
\begin{equation}
                    \label{eq4.15}
\|v\|_{L_{q^*}(Q_2^+,\mu)}\le N\|v\|_{\cH^1_{q}(Q_2^+,\mu)},
\end{equation}
where $N=N(d,\alpha,q,q^*)>0$ is a constant. The result still holds when $q^*=\infty$ and the inequality in \eqref{eq2.06} is strict.
\end{lemma}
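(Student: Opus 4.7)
The plan is to derive the embedding from a parabolic kernel representation of $v$ in terms of $(v, Dv, v_t)$, combined with a weighted Hardy--Littlewood--Sobolev estimate on the space of homogeneous type $(\bR\times\bR^d_+,\rho,\mu)$, where $\rho(z,z') = |x-y| + |t-s|^{1/2}$ is the parabolic quasi-metric.

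\textbf{Step 1 (Localization).} Choose $\eta(t,x') \in C_c^\infty(\bR^d)$ with $\eta \equiv 1$ on $(-4,0)\times B'_2$ and $\supp \eta \subset (-9,9)\times B'_3$. Setting $\tilde v = \eta v$ and using $v_t = D_iF_i + F_0/x_d + f$, the product rule gives
$$\tilde v_t \;=\; D_i(\eta F_i) \;+\; (\eta F_0)/x_d \;+\; \bigl(\eta f + \eta_t v - F_i D_i\eta\bigr),$$
so $\tilde v \in \cH^1_q(\bR\times\bR^d_+,\mu)$ with norm controlled by $\|v\|_{\cH^1_q(Q_2^+,\mu)}$. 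It therefore suffices to prove \eqref{eq4.15} on $\bR\times\bR^d_+$ for functions that are compactly supported in $(t,x')$.

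\textbf{Step 2 (Kernel representation).} Let $\Gamma(t,x,y)$ be the heat kernel of $\cL = \partial_t + 1 - x_d^{-\alpha}D_i(x_d^\alpha D_i\,\cdot\,)$ on $\bR\times\bR^d_+$ with the conormal boundary condition at $x_d = 0$. Then
$$v \;=\; \Gamma * \bigl(v + v_t - x_d^{-\alpha}D_i(x_d^\alpha D_i v)\bigr),$$
and integrating by parts in $y$ (using that $\Gamma$ satisfies the conormal condition and the relation $v_t = D_iF_i + F_0/x_d + f$), we transfer one $y$-derivative onto $\Gamma$ to obtain a representation of the form
$$v(t,x) \;=\; \int\!\!\int \Bigl[\Gamma\,(v+f) \;-\; D_{y_i}\Gamma\,(F_i + D_i v) \;+\; \Gamma\,F_0/y_d\Bigr]\,\mu(dy)\,ds.$$

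\textbf{Step 3 (Kernel bounds).} Separation of variables gives
$$\Gamma(t,x,y) \;=\; c_d\,e^{-t} t^{-(d-1)/2} e^{-|x'-y'|^2/(4t)}\, K_\alpha(t,x_d,y_d),$$
where $K_\alpha$ is the Bessel-type heat kernel on $(\bR_+, y_d^\alpha\,dy_d)$. Standard estimates on modified Bessel functions yield the Gaussian bound
$$\Gamma(t,x,y) \;+\; \sqrt{t}\,|D_y\Gamma(t,x,y)| \;+\; (t/y_d)\,\Gamma(t,x,y) \;\le\; \frac{N}{\mu(Q^+_{\sqrt t}(x))}\,e^{-c|x-y|^2/t},$$
with $\mu(Q^+_r(x_0)) \asymp r^{d+2}\max\{x_{0d},r\}^\alpha$. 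The critical regime for \eqref{eq4.15} is $x_{0d}\le\sqrt t$, in which $\mu(Q^+_{\sqrt t}(x))\asymp (\sqrt t)^{d+2+\alpha_+}$. Integrating the Gaussian factor in $t$ gives pointwise singularities of order $\rho(z,z')^{-(d+1+\alpha_+)}$ for each of the three kernels on the right-hand side of Step 2.

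\textbf{Step 4 (Weighted HLS).} The generalized Hardy--Littlewood--Sobolev inequality of \cite{HL28}, applied to the fractional integral
$$I_1 g(z) \;=\; \int g(z')\,\rho(z,z')^{-(D-1)}\,\mu(dz'),\qquad D := d+2+\alpha_+,$$
maps $L_q(\mu)\to L_{q^*}(\mu)$ precisely under condition \eqref{eq2.06}; the endpoint $q^*=\infty$ requires the strict inequality and is handled by a standard John--Nirenberg/Morrey argument. Combining this with Steps 2--3 yields \eqref{eq4.15}; the term $\Gamma\,F_0/y_d$ is controlled by first applying the one-dimensional weighted Hardy inequality of \cite[Lemma 2.2]{D-P} to reduce $\|F_0/y_d\|_{L_q(\mu)}$ to $\|F_0\|_{L_q(\mu)}$ within the $I_1$-framework.

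\textbf{Main obstacle.} The hardest step is Step 3: obtaining sharp bounds on $K_\alpha$ and $D_y K_\alpha$ whose singularity matches the \emph{effective} parabolic dimension $d+2+\alpha_+$ uniformly across the regimes $y_d \ll \sqrt t$ and $y_d \gg \sqrt t$, where the Bessel kernel behaves qualitatively differently. When $d=1$ the horizontal variable $x'$ is absent, so the parabolic metric loses one unit of freedom and the argument gives only $(4+\alpha_+)/q^* \ge (4+\alpha_+)/q - 1$ rather than the expected $(3+\alpha_+)/q^* \ge (3+\alpha_+)/q - 1$; whether this loss is essential or an artifact of the method is the content of the conjecture recorded just before the lemma.
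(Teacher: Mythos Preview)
Your heat-kernel approach is genuinely different from the paper's proof, which is more elementary: the paper writes $v(t,x)-c$ via the fundamental theorem of calculus along segments from $(t,x)$ to points $(s,y)$ in a fixed \emph{interior} cube $\widetilde Q=Q_{1/2}(0,\ldots,0,3/2)$ (so $y_d\ge 1$), substitutes $v_t=D_iG_i+G_0/x_d+g$, integrates by parts, and then applies Young's inequality in $t$ and in $x'$ followed by a one-dimensional (weighted) Hardy--Littlewood--Sobolev inequality in $x_d$. No heat kernel or Bessel asymptotics enter.

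There is, however, a real gap in your handling of the $F_0/y_d$ term. The bound $(t/y_d)\,\Gamma\le N\mu(Q^+_{\sqrt t}(x))^{-1}e^{-c|x-y|^2/t}$ in Step~3 is false: for the conormal Bessel kernel one has $K_\alpha(t,x_d,0^+)=C_\alpha t^{-(\alpha+1)/2}e^{-x_d^2/(4t)}>0$, so $\Gamma/y_d$ blows up like $1/y_d$ as $y_d\to 0$ with $t$ fixed, while your right-hand side stays bounded. Your fallback in Step~4, reducing $\|F_0/y_d\|_{L_q(\mu)}$ to $\|F_0\|_{L_q(\mu)}$ by a Hardy inequality, also fails: such an inequality requires $F_0$ to vanish at $y_d=0$, but the definition of $\bH^{-1}_q$ imposes no boundary condition on $F_0$.

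The paper sidesteps this by a geometric observation. Because the reference points $y$ lie in $\widetilde Q$ with $y_d\ge 1$, every intermediate point $\hat x=(1-\theta)x+\theta y$ satisfies $\hat x_d\ge \theta y_d\ge\theta$, and since $|x-\hat x|=\theta|x-y|\le N\theta$ one gets $1/\hat x_d\le N/|x-\hat x|$. Thus the factor $G_0(\tau,\hat x)/\hat x_d$ is dominated by $|G_0(\tau,\hat x)|\,|x-\hat x|^{-1}$, which is simply absorbed into the Riesz-type singularity of the representation; no Hardy inequality is needed. If you wish to salvage the heat-kernel route, you would need an analogous mechanism that trades the boundary singularity $1/y_d$ for a metric one $1/|x-y|$ (or for a $y$-derivative landing on $\Gamma$), and this does not follow from Gaussian upper bounds alone.
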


\begin{proof}
Note that the case when $d=1$ follows by considering $v (t,x_1)=v(t,x_1,x_2)$ with a dummy variable $x_2$ and using  the result when $d=2$. Hence, we only need to prove \eqref{eq4.15}  when $d \geq 2$.
It suffices to consider the case when $q^*>q$. Without loss of generality, we may assume that
\begin{equation}
                                \label{eq2.24}
v_t= D_i G_i+G_0/x_d+g
\end{equation}
in $Q_2^+$ in the weak sense and
$$
\|v\|_{L_q(Q_2^+,\mu)}+\|Dv\|_{L_q(Q_2^+,\mu)}
+\|G\|_{L_q(Q_2^+,\mu)}+\|g\|_{L_q(Q_2^+,\mu)}\le 1,
$$
where $G = (G_0, G_1,\ldots, G_d)$. Let $\widetilde Q=Q_{1/2}(0,0,\ldots,0, 3/2)$ and $\psi\in C_0^\infty(\widetilde Q)$ with unit integral.
For any $(t,x)\in Q_2^+$,  by the fundamental theorem of calculus,
\begin{align}
                        \label{eq10.17}
&v(t,x)-c\notag\\
&=\int_{\widetilde Q}\int_0^1 \Big(v_t(t(1-\theta^2)+s\theta^2,x(1-\theta)+y\theta)2\theta(s-t)\notag\\
&\quad +(D v)(t(1-\theta^2)+s\theta^2,x(1-\theta)+y\theta)\cdot (y-x)\Big)\psi(s,y)\,d\theta \,ds\,dy\notag\\
&:=I_1+I_2,
\end{align}
where
$$
c=\int_{\widetilde Q} v(s,y)\psi(s,y)\,ds\,dy.
$$
Let $\hat{x}=x(1-\theta)+y\theta$ and $\tau=t(1-\theta^2)+s\theta^2$. Clearly,
\begin{equation}
                                \label{eq11.03}
(|\hat{x}-x|^2+|\tau-t|)^{1/2}=(|x-y|^2+|t-s|)^{1/2}\theta\le N\theta.
\end{equation}
It then follows from \eqref{eq2.24} that
\begin{align}
                    \label{eq10.18}
I_1=2\int_{\widetilde Q}\int_0^1 \Big(g(\tau,\hat{x})+(D_i{G_i})(\tau, \hat{x})+\frac{G_0(\tau, \hat{x})}{\hat{x}_d}\Big)
\theta(s-t)\psi(s,y)\,d\theta \,ds\,dy.
\end{align}
Since $y \in \widetilde{Q}$, we have $y_d \geq 1$ and thus
\begin{equation}
                    \label{eq3.26}
|x-\hat{x}|=|x-y|\theta \leq N\theta y_d \le  N \hat{x}_d
\quad \text{and}\quad x_d\le N\hat x_d.
\end{equation}
Moreover,
$$
(D_i{G_i})(\tau, \hat{x})
=D_{y_i}{G_i}(\tau, \hat{x})\theta^{-1}.
$$
Therefore, from \eqref{eq10.18} and integration by parts, we deduce
\begin{align}
                                \label{eq10.39}
|I_1|\le N\int_{\widetilde Q}\int_0^1 \big(|g(\tau, \hat{x})|\theta+|{G}(\tau, \hat{x})|+|{G_0}(\tau, \hat{x})||x-\hat{x}|^{-1}\theta\big)
|s-t|\,d\theta \,ds\,dy.
\end{align}
Combining \eqref{eq10.17} and \eqref{eq10.39}, we obtain
\begin{align*}
&|v(t,x)-c|\\
&\le N\int_{\widetilde Q}\int_0^1 \big(|g(\tau, \hat{x})|\theta+|{G}(\tau,\hat{x})|+|{G_0}(\tau, \hat{x})||x-\hat{x}|^{-1}\theta+|Dv(\tau,\hat{x})|\big)
\,d\theta \,ds\,dy\notag\\
&\le N\int_{Q_2^+}\int_0^1 \theta^{-d-2}\big(|g(\tau,\hat{x})|\theta+|{G}(\tau,\hat{x})|\notag\\
&\quad+|G_0(\tau,\hat{x})||x-\hat{x}|^{-1}\theta+|Dv(\tau,\hat{x})|\big)\,
\chi_{\{(|x-\hat{x}|^2+|t-\tau|)^{1/2}\le N\theta ,x_d\le N\hat x_d\}}d\theta \,d\tau\,d\hat{x}\notag\\
&\le N\int_{Q_2^+}\big(|g(\tau,\hat{x})|(|x-\hat{x}|^2+|t-\tau|)^{-d/2}\notag\\
&\qquad +|G_0(\tau,\hat{x})||x-\hat{x}|^{-1}(|x-\hat{x}|^2+|t-\tau|)^{-d/2}
\notag\\
&\quad+(|{G}(\tau,\hat{x})|
+|Dv(\tau,\hat{x})|)(|x-\hat{x}|^2+|t-\tau|)^{-(d+1)/2}\big)\chi_{\{x_d\le N\hat x_d\}} \,d\tau\,d\hat{x},
\end{align*}
where we used $dy=\theta^{-d}\,d\hat{x}$, $d\tau=\theta^{-2}\,ds$, \eqref{eq11.03}, and \eqref{eq3.26} in the third inequality.

We apply Young's inequality for convolutions with respect to the time variable to get that for any $x\in B_2^+$,
\begin{align}
                \label{eq3.20}
&\|v(\cdot,x)-c\|_{L_{q^*}((-4,0))}\le N\int_{B_2^+}\big(\|g(\cdot,\hat{x})\|_{L_{q}((-4,0))}|x-\hat{x}|
+\|{G}(\cdot,\hat{x})\|_{L_{q}((-4,0))}
\notag\notag\\
&\quad \qquad+
\|Dv(\cdot,\hat{x})\|_{L_{q}((-4,0))}\big)|x-\hat{x}|^{-d-1+2/\ell}\chi_{\{x_d\le N\hat x_d\}}\,d\hat{x},
\end{align}
where $\ell\in (1,\infty)$ satisfies
\begin{equation}
                                \label{eq1.39}
1/\ell+1/q=1+1/q^*
\end{equation}
and $\ell d>2$ which always holds because $\ell>1$ and $d\ge 2$. Similarly, using Young's inequality in $x'$ to get that for any $x_d\in (0,2)$,
\begin{align}   \label{eq1.47}
&\|v(\cdot,\cdot,x_d)-c\|_{L_{q^*}(Q_2')}\notag \\
& \le N\int_{0}^{2}\big(\|g(\cdot,\cdot, \hat{x}_d)\|_{L_{q}(Q_2')}|x_d-\hat{x}_d|
+\|{G}(\cdot,\cdot, \hat{x}_d)\|_{L_{q}(Q_2')}
\notag\\
&\qquad
+\|Dv(\cdot,\cdot, \hat{x}_d)\|_{L_{q}(Q_2')}\big)|x_d-\hat{x}_d|^{-(d+1)(1-1/\ell)}\chi_{\{x_d\le N\hat x_d\}}\,d\hat{x}_d,
\end{align}
where we used $(d+1-2/\ell)\ell>d-1$ which holds true as $\ell >1$. In the sequel, we discuss two cases: $\alpha\ge 0$ and $\alpha\in (-1,0)$.\\
\noindent
{\bf Case I: $\alpha\ge 0$.} We first consider the case when $q^* <\infty$. Multiplying both sides of \eqref{eq1.47} by $x_d^{\alpha/q*}$, we get
\begin{align*}
&x_d^{\alpha/q*}\|v(\cdot,\cdot,x_d)-c\|_{L_{q^*}(Q_2')}\notag\\
& \le Nx_d^{\alpha/q*}\int_{0}^{2}\big(\|g(\cdot,\cdot, \hat{x}_d)\|_{L_{q}(Q_2')}|x_d-\hat{x}_d|
+\|{G}(\cdot,\cdot, \hat{x}_d)\|_{L_{q}(Q_2')}\notag\\
&\qquad
+\|Dv(\cdot,\cdot, \hat{x}_d)\|_{L_{q}(Q_2')}\big)|x_d-\hat{x}_d|^{-(d+1)(1-1/\ell)}
\chi_{\{x_d\le N\hat{x}_d\}}\,d\hat{x}_d\notag\\
& \le N\int_{0}^{2}\big(\|g(\cdot,\cdot,\hat{x}_d)\|_{L_{q}(Q_2')}|x_d-\hat{x}_d|
+\|{G}(\cdot,\cdot,\hat{x}_d)\|_{L_{q}(Q_2')}\notag\\
&\qquad
+\|Dv(\cdot,\cdot, \hat{x}_d)\|_{L_{q}(Q_2')}\big)\hat{x}_d^{\alpha/q}
|x_d-\hat{x}_d|^{-(d+1)(1-1/\ell)+\alpha/q^*-\alpha/q}
\,d\hat{x}_d.
\end{align*}
Since both $x_d$ and $\hat x_d$ are bounded,
we can apply the Hardy--Littlewood-Sobolev inequality for fractional integration in $x_d$ to obtain
\begin{align}
                                \label{eq1.33}
&\|v-c\|_{L_{q^*}(Q^+_2,\mu)}
\le N\|g\|_{L_{q}(Q^+_2,\mu)}
+N\|{G}\|_{L_{q}(Q^+_2,\mu)}
+N\|Dv\|_{L_{q}(Q^+_2,\mu)}
\end{align}
provided that
$$
(d+1)(1-1/\ell)-\alpha/q^*+\alpha/q\le 1+1/q^*-1/q.
$$
From \eqref{eq1.39}, we see that this condition is equivalent to \eqref{eq2.06}.

When $q^*=\infty$, we have $\ell=p=q/(q-1)$. Thus, if the inequality \eqref{eq2.06} is strict, we also get \eqref{eq1.33} by using H\"older's inequality.
From \eqref{eq1.33} and the definition of $c$, we easily get \eqref{eq4.15}.

\noindent
{\bf Case II: $\alpha\in (-1,0)$.}  For the case $q^* <\infty$, we will apply the generalized Hardy--Littlewood Sobolev inequality (see \cite[Theorem 6]{HL28} or \cite[Theorem B]{SW58}) to conclude \eqref{eq1.33}, which gives \eqref{eq4.15}. Indeed, in terms of the notation in \cite[Theorem 6]{HL28}, we choose
\[
 r=q,\quad s=\frac{q^*}{q^*-1},\quad
 h=\frac{ \alpha}{ q^*},\quad k=-\frac \alpha {q^*},\quad \lambda=2-\frac 1 s-\frac 1 r.
\]
Then it is easily seen that the conditions in there are satisfied.  Let
\[
\begin{split}
f(\hat{x}_d) = \big(\|g(\cdot,\cdot, \hat{x}_d)\|_{L_{q}(Q_2')}
+\|{G}(\cdot,\cdot, \hat{x}_d)\|_{L_{q}(Q_2')}
+\|Dv(\cdot,\cdot, \hat{x}_d)\|_{L_{q}(Q_2')}\big) \chi_{(0,2)}(\hat{x}_d).
\end{split}
\]
As both $x_d$ and $\hat x_d$ are bounded, if
\begin{equation} \label{0526-eqn}
(d+1)(1-1/\ell) \le \lambda-h-k,
\end{equation}
then by \eqref{eq1.47} we see that for any $g \in L_{s}((0,2))$
\[
\begin{split}
&\left| \int_0^{2} x_d^{\alpha/q*}\|v(\cdot,\cdot,x_d)-c\|_{L_{q^*}(Q_2')} g(x_d) \, dx_d \right| \\
&  \leq N \int_0^{2} \int_0^2 \frac{f(\hat{x}_d) \hat{x}_d^{h} |g(x_d)|}{\hat{x}_d^{h}|x_d - \hat{x}_d|^{\lambda -h -k} x_d^{k}}\, d\hat{x}_d \, dx_d.
\end{split}
\]
From this, we apply  \cite[Theorem 6]{HL28} to get
\[
\begin{split}
& \left| \int_0^{2}  x_d^{\alpha/q*}\|v(\cdot,\cdot,x_d)-c\|_{L_{q^*}(Q_2')} g(x_d) \, dx_d \right|\\
& \leq N \left(\int_0^2 f^q(\hat{x}_d) \hat{x}_d^{hq}\, d\hat{x}_d \right)^{1/q} \|g\|_{L_s((0,2))}\\
& \leq N  \left(\int_0^2 f^q(\hat{x}_d) \hat{x}_d^{\alpha}\, d\hat{x}_d \right)^{1/q}\|g\|_{L_s((0,2))},
\end{split}
\]
where we used the fact that $\hat{x}_d^{hq} \leq N \hat{x}_d^\alpha$ for any $\hat{x}_d \in (0,2)$ because $\alpha <0$.
Then by the duality, we obtain \eqref{eq1.33} when $q^* <\infty$. Because of \eqref{eq1.39}, the condition \eqref{0526-eqn} is equivalent to
\begin{equation}
                    \label{eq2.17}
(d+2)/q\le 1+(d+2)/q^*,
\end{equation}
which is \eqref{eq2.06} when $\alpha <0$.
When $q^*=\infty$ and the inequality \eqref{eq2.17} is strict, we also have \eqref{eq1.33} by using H\"older's inequality.
The lemma is proved.
\end{proof}
\begin{remark} \label{imbd-remark} \textup{(i)}
In view of the additional factors in the $g$ terms in \eqref{eq3.20} and \eqref{eq1.47}, it is possible to relax the integrability condition on $g$ in Lemma \ref{lem2.2}: we only need $g\in L_{\widetilde q}(Q_2^+,\mu)$, where $\widetilde q\in (1,q)$ satisfies
$$
(d+2+\alpha_+)/\widetilde q\le 2+(d+2+\alpha_+)/q^*
$$
when $d\ge 2$. However, this will not be used in the proofs of our main results.\\
\noindent
\textup{(ii)} In  the time-independent case,  \eqref{eq3.20} is not needed. Therefore, with  a minor modification of the proof, we also have the embedding:
\[
\|u\|_{L_{q^*}(B_2^+, \mu)} \leq N \|u\|_{W^1_q(B_2^+, \mu)} \quad \text{for all} \ u \in W^{1}_q(B_2^+, \mu)
\]
with $q, q^* \in (1, \infty)$ satisfying
\[
(d+\alpha_+)/q\le 1+(d+\alpha_+)/q^*.
\]
The result still holds when $q > d+\alpha_+$ and $q^*=\infty$. See \cite[Theorem 6]{Ha} for a different proof in a more general setting.
\end{remark}

We also need a weighted parabolic embedding result for functions in the energy space, which will be used in the proof of Lemma \ref{lem2} when we apply the Moser iteration.
\begin{lemma}
                                    \label{Sobolev-imbed}
Let $\alpha \in (-1, \infty)$, $ l_0 = \frac{d+\alpha_{+}+2}{d+\alpha_{+}}$ if $d + \alpha_+ >2$ and $l_0 \in (1, 2)$ be any number if $d + \alpha_+ \leq 2$.  Then there exists a constant $N = N(d, l_0, \alpha)$ such that
\[
\begin{split}
& \bigg(\fint_{Q_r^+(z_0)} |u(t,x)|^{2l_0} \,{\mu(dz)} \bigg)^{1/l_0} \\
& \leq N \sup_{t\in (t_0 -r^2, t_0)} \fint_{B_r^+(x_0)} |u(t,x)|^2 \,\mu(dx) + N r^2\fint_{Q_r^+(z_0)} |D u(t,x)|^2\,{\mu(dz)},
\end{split}
\]
for every $z_0 = (t_0, x_0) \in \overline{{\bR}^{d+1}_{+}}$, $r >0$, and
$$
u \in L_\infty((t_0-r^2, t_0); L_2(B_r^+(x_0), \mu)) \cap L_2((t_0-r^2, t_0);  W^{1}_2(B_r^+(x_0), \mu)).
$$
\end{lemma}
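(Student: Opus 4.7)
The approach is to combine the weighted spatial Sobolev embedding from Remark \ref{imbd-remark}(ii) with an interpolation inequality and Young's inequality, following the classical recipe $L_\infty(L_2)\cap L_2(H^1)\hookrightarrow L_{2(d+2)/d}$ from the unweighted theory. First, by translating in $t$ and $x'$ and rescaling by $r$, the inequality reduces to the case $r=1$, $t_0=0$, $x_0'=0$: for boundary-centered cylinders ($x_{0d}=0$) this is a clean dimensional rescaling where $\mu(dz)$ picks up a factor $r^{d+\alpha+2}$ that cancels in the averages, while the explicit $r^2$ on the RHS precisely absorbs the $r^{-2}$ introduced by $|Du|^2=r^{-2}|D_s\tilde u|^2$. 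The interior case splits into $x_{0d}\ge 2r$ (where $x_d^\alpha$ is bounded between positive constants on $B_r(x_0)$ and the inequality reduces to the classical unweighted parabolic Sobolev embedding) and $x_{0d}<2r$ (where $\mu$-doubling permits passage to a boundary-centered enlargement via a cutoff).

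Set $m:=d+\alpha_+$ and choose $q^*:=2/(2-l_0)$; when $m>2$ and $l_0=(m+2)/m$ this recovers the critical exponent $2m/(m-2)$, while when $m\le 2$ the flexibility $l_0\in(1,2)$ yields $q^*\in(2,\infty)$. In both cases the hypothesis $m/2\le 1+m/q^*$ of Remark \ref{imbd-remark}(ii) holds (with strict inequality when needed), so for a.e.\ $t\in(-1,0)$,
\[
\|u(t,\cdot)\|_{L_{q^*}(B_1^+,\mu)}\le N\big(\|u(t,\cdot)\|_{L_2(B_1^+,\mu)}+\|Du(t,\cdot)\|_{L_2(B_1^+,\mu)}\big).
\]
Setting $\theta:=(l_0-1)/l_0\in(0,1)$, the identity $\tfrac{1}{2l_0}=\tfrac{\theta}{2}+\tfrac{1-\theta}{q^*}$ and Hölder's inequality give
\[
\|u(t)\|_{L_{2l_0}(B_1^+,\mu)}^{2l_0}\le\|u(t)\|_{L_2(B_1^+,\mu)}^{2(l_0-1)}\,\|u(t)\|_{L_{q^*}(B_1^+,\mu)}^{2},
\]
where the exponents are arranged so that $2l_0(1-\theta)=2$ matches the square coming from the spatial embedding.

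Substituting the spatial embedding into the Hölder estimate, integrating over $t\in(-1,0)$, and pulling $\sup_t\|u(t)\|_{L_2(B_1^+,\mu)}^{2(l_0-1)}$ out of the integral, then raising to the $1/l_0$ power and applying Young's inequality with conjugate exponents $l_0/(l_0-1)$ and $l_0$, one converts the product estimate into
\[
\|u\|_{L_{2l_0}(Q_1^+,\mu)}^{2}\le N\sup_{t\in(-1,0)}\|u(t)\|_{L_2(B_1^+,\mu)}^{2}+N\int_{-1}^{0}\big(\|u(t)\|_{L_2}^{2}+\|Du(t)\|_{L_2}^{2}\big)\,dt.
\]
Since the time interval has length one, the $\|u\|_{L_2}^2$-integral is absorbed into $\sup_t\|u(t)\|_{L_2}^2$, and undoing the rescaling of Step~1 recovers the stated averaged inequality. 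The main obstacle is not the analytic content but the organizational bookkeeping: the clean scaling only works for boundary-centered cylinders, so verifying the case $x_{0d}>0$ requires splitting into subcases and invoking either the unweighted parabolic Sobolev theorem (when the ball is far from the boundary) or doubling of $\mu$ together with a cutoff extension (when the ball nearly touches the boundary). A secondary subtlety is choosing $q^*$ at the endpoint $m\le 2$ so as to match the strict-inequality form of Remark \ref{imbd-remark}(ii).
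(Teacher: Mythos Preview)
Your proposal is correct and follows essentially the same approach as the paper: apply the weighted spatial Sobolev embedding of Remark~\ref{imbd-remark}(ii) with exponent $q^*=2/(2-l_0)$ (the paper denotes this $\kappa_0$), interpolate via H\"older's inequality, integrate in time, and finish with Young's inequality. The only organizational difference is that the paper applies the rescaled spatial embedding directly on $B_r^+(x_0)$ for arbitrary $z_0$ and $r$, so it never reduces to $r=1$ and does not split into boundary/interior subcases.
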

\begin{proof}  Let $\Gamma = (t_0-r^2, t_0)$, and let $ \kappa_0  = \frac{2}{2-l_0} \in (1, \infty)$. By Remark \ref{imbd-remark} (ii) (see also \cite[Theorem 2.4]{Sire-1}) and after rescaling,  we have the following weighted Sobolev inequality:
\begin{align} \notag
&\bigg( \fint_{B_r^+(x_0)} |u(t,x)|^{\kappa_0} \,\mu(dx) \bigg)^{\frac{1}{\kappa_0}}\\ \label{em-06-27}
&\leq N r \bigg( \fint_{B_r^+(x_0)} |D u(t,x)|^2 \,\mu(dx) \bigg)^{\frac{1}{2}} +N \bigg( \fint_{B_r^+(x_0)} |u(t,x)|^2 \,\mu(dx) \bigg)^{\frac{1}{2}},
\end{align}
where $N=N(d, l_0,\alpha)>0$. This together with H\"{o}lder's inequality gives
\[
\begin{split}
&  \fint_{B_r^+(x_0)} |u(t,x)|^{2l_0} \,\mu(dx) \\
 &  \leq \bigg( \fint_{B_r^+(x_0)} |u(t,x)|^{2} \,\mu(dx) \bigg)^{1-\frac{2}{\kappa_0}}  \bigg( \fint_{B_r^+(x_0)} |u(t,x)|^{\kappa_0} \,\mu(dx) \bigg)^{\frac{2}{\kappa_0}} \\
 & \leq N  \bigg(\sup_{t\in \Gamma} \fint_{B_r^+(x_0)} |u(t, x)|^2 \,\mu(dx)\bigg)^{1-\frac{2}{\kappa_0}}\\
  &\qquad \cdot \bigg( r^{2}\fint_{B_r^+(x_0)} |D u(t,x)|^{2} \,\mu(dx) + \fint_{B_r^+(x_0)} |u(t,x)|^{2} \,\mu(dx) \bigg).
\end{split}
\]
Now by integrating with respect $t$ on $\Gamma$ and using Young's inequality, we obtain
\begin{equation*} 
\begin{split}
& \fint_{Q_r^+(z_0)} |u(t,x)|^{2l_0} \,{\mu} (dz) \\
& \leq
N\Big( \sup_{t\in \Gamma} \fint_{B_r^+(x_0)} |u(t, x)|^2 \,\mu(dx) \Big)^{l_0}+ Nr\Big(  \fint_{Q_r^+(z_0)} |D u(t,x)|^{2} \,{\mu}(dz) \Big)^{l_0}.
\end{split}
\end{equation*}
The lemma is then proved.
\end{proof}

Finally, we conclude this section with the following useful result on the existence and uniqueness of $L_2$-solutions of a class of equations that are slightly more general than \eqref{eq3.23}. The result is considered as a special case of Theorem \ref{thm2} when $p =2$, but no regularity requirements are imposed on the coefficients.
\begin{lemma} \label{L-2-lemma} Let $\alpha \in (-1, \infty)$,  $\lambda > 0$, and let $(a_{ij})$, $a_0$, and $c_0$ be measurable functions defined on $\Omega_T$ such that \eqref{ellipticity} and \eqref{a-c-eq} are satisfied. Then for each $F \in L_2(\Omega_T,\mu)^d$ and $f \in  L_2(\Omega_T,\mu)$, there exists a unique weak solution $u\in \cH^1_2(\Omega_T,\mu)$ to
\begin{equation} \label{eq3-b.23}
\left\{
\begin{aligned}
 x_d^\alpha \big(a_0(t,x) u_t + \lambda c_0(t,x) u \big) - D_i\big(x_d^\alpha [a_{ij}(t,x) D_j u - F_i]\big)  & = \sqrt{\lambda} x_d^\alpha f\\
\lim_{x_d \rightarrow 0^+}x_d^\alpha \big(a_{dj}(t,x) D_j u - F_d\big)  &= 0
\end{aligned}  \right.
\end{equation}
in  $\Omega_T$. Moreover,
\begin{equation} \label{L2-lemma-est}
\|D u\|_{L_2(\Omega_T,\mu)}+\sqrt{\lambda} \|u\|_{L_2(\Omega_T,\mu)}
\leq N \|F\|_{L_2(\Omega_T, \mu)} +N\|f\|_{L_2(\Omega_T,\mu)},
\end{equation}
where $N = N(\kappa)$.
\end{lemma}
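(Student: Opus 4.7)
This is the standard $L_2$-solvability theorem for linear parabolic equations with bounded measurable coefficients, transplanted to the weighted setting with weight $x_d^\alpha$. My plan is Galerkin approximation combined with an energy estimate, carried out using the weighted Sobolev space $W^1_2(\bR^d_+,\mu)$. Uniqueness follows at once from the a priori estimate \eqref{L2-lemma-est} applied to the difference of two solutions with zero data, so the central tasks are the a priori estimate and existence.

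For the a priori estimate, formally test \eqref{eq3-b.23} with $u$; the conormal boundary condition at $\{x_d=0\}$ makes the boundary integral vanish and yields
$$\int_{\Omega_T} x_d^\alpha\big[a_0 u u_t + \lambda c_0 u^2 + a_{ij} D_j u\, D_i u\big]\,dz = \int_{\Omega_T} x_d^\alpha\big[F_i D_i u + \sqrt{\lambda}\,f u\big]\,dz.$$
Ellipticity \eqref{ellipticity} bounds $\int x_d^\alpha a_{ij}D_j u\, D_i u\,dz\geq \kappa\|Du\|_{L_2(\Omega_T,\mu)}^2$, condition \eqref{a-c-eq} bounds $\lambda\int x_d^\alpha c_0 u^2\,dz\geq \kappa\lambda\|u\|_{L_2(\Omega_T,\mu)}^2$, and Young's inequality controls the right-hand side by $\tfrac{\kappa}{2}(\|Du\|_{L_2(\Omega_T,\mu)}^2+\lambda\|u\|_{L_2(\Omega_T,\mu)}^2) + N(\|F\|_{L_2(\Omega_T,\mu)}^2 + \|f\|_{L_2(\Omega_T,\mu)}^2)$. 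The delicate term $\int x_d^\alpha a_0 u u_t\,dz$ is handled by using the weighted parabolic embedding $\cH^1_2(\Omega_T,\mu)\hookrightarrow C((-\infty,T]; L_2(\bR^d_+,\mu))$, valid for $\alpha>-1$, together with the vanishing of $u$ in $L_2(\bR^d_+,\mu)$ as $t\to-\infty$, to show that this term is nonnegative in the Lions-type variational sense, thereby closing \eqref{L2-lemma-est}.

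For existence, I use Galerkin approximation. Choose a countable orthonormal basis $\{\psi_k\}_{k\geq 1}\subset W^1_2(\bR^d_+,\mu)$ of $L_2(\bR^d_+,\mu)$ (for instance, weighted eigenfunctions of a fixed symmetric elliptic operator), seek $u_n(t,x)=\sum_{k=1}^n c^n_k(t)\psi_k(x)$ satisfying \eqref{eq3-b.23} projected against $\psi_1,\ldots,\psi_n$, with initial condition $c^n(-T_0)=0$. This is a linear first-order ODE system $M^n(t)\dot c^n(t) + A^n(t) c^n(t) = b^n(t)$; since $a_0\geq \kappa$, the mass matrix $M^n(t)_{kl}=\int a_0(t,\cdot)\psi_k\psi_l\, x_d^\alpha\,dx$ is uniformly positive definite, so Carath\'eodory's theorem provides a unique absolutely continuous solution $c^n$ on $[-T_0,T]$. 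Since $\partial_t u_n$ is now a genuine function, the derivation of the a priori estimate of the previous paragraph becomes rigorous for $u_n$, giving uniform $\cH^1_2$-bounds. Weak compactness then permits passage to the limit $n\to\infty$, $T_0\to\infty$ to produce a weak solution $u\in \cH^1_2(\Omega_T,\mu)$ satisfying \eqref{L2-lemma-est}.

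The principal technical hurdle is the rigorous treatment of $\int x_d^\alpha a_0 u u_t\,dz$ when $a_0$ is merely measurable in $t$, since the naive identity $a_0 u u_t = \tfrac{1}{2}\partial_t(a_0 u^2)-\tfrac{1}{2}(\partial_t a_0)u^2$ involves the nonexistent distribution $\partial_t a_0$. At the Galerkin level this is addressed by a time-mollification $a_0^\epsilon = a_0*_t\rho_\epsilon$ combined with the uniform bound $\kappa\leq a_0^\epsilon\leq \kappa^{-1}$ and the vanishing $u_n(-T_0,\cdot)=0$, followed by a careful passage to the limit $\epsilon\to 0$.
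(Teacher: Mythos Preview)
Your overall strategy (energy estimate by testing with $u$, then Galerkin for existence) is the same as the paper's, though the paper localizes first: it solves by Galerkin on bounded cylinders $\widehat Q_k=(-k^2,\min\{k^2,T\})\times B_k^+$ with zero lateral/initial data, extends by zero, obtains uniform energy bounds, and passes to a weak limit as $k\to\infty$. Your version (Galerkin directly on $\bR^d_+$ with time cutoff $T_0\to\infty$) is an acceptable variant; the paper's bounded-domain route avoids having to produce a spectral basis on the unbounded weighted space.

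You are right that the term $\int a_0 u u_t\,\mu(dz)$ is the only delicate point, but your proposed fix does not close. After mollifying $a_0$ in time you get
\[
\int_{-T_0}^{t_1}\!\!\int a_0^\epsilon\, u_n\,\partial_t u_n\,\mu(dx)\,dt
=\tfrac12\!\int a_0^\epsilon(t_1,\cdot)\,u_n(t_1,\cdot)^2\,\mu(dx)
-\tfrac12\!\int_{-T_0}^{t_1}\!\!\int(\partial_t a_0^\epsilon)\,u_n^2\,\mu(dx)\,dt,
\]
and the last integral is not controlled as $\epsilon\to0$ for merely measurable $a_0$; the uniform bound $\kappa\le a_0^\epsilon\le\kappa^{-1}$ gives no information on $\partial_t a_0^\epsilon$. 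So the ``careful passage to the limit'' you allude to does not go through as written. The paper's proof is equally informal on this point. In practice the issue is harmless here: everywhere the lemma is actually invoked (Proposition~\ref{Simple-approx}, Theorem~\ref{thm3}) one has $a_0=\bar a_0(x_d)$ independent of $t$, and then at the Galerkin level
\[
\int_{-T_0}^{t_1}\!\!\int a_0(x)\,u_n\,\partial_t u_n\,\mu(dx)\,dt
=\tfrac12\!\int a_0(x)\,u_n(t_1,x)^2\,\mu(dx)\ge \tfrac{\kappa}{2}\|u_n(t_1,\cdot)\|_{L_2(\mu)}^2,
\]
which closes the estimate immediately. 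If you want the lemma in the stated generality with $a_0=a_0(t,x)$, you should either add a hypothesis such as $\partial_t a_0\in L_\infty$, or argue via a different mechanism than the naive energy identity.
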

\begin{proof}
We first prove the a priori estimate \eqref{L2-lemma-est}. Let $u\in \cH^1_2(\Omega_T,\mu)$ be a weak solution of \eqref{eq3-b.23}. By multiplying the equation \eqref{eq3-b.23} with $u$ and using integration by parts and \eqref{ellipticity}, we obtain
\[
\begin{split}
& \sup_{t\in (-\infty, T)}\int_{\bR^d_+} |u(t,x)|^2 \,\mu(dx) + \int_{\Omega_T} |Du|^2 \,\mu(dz) + \lambda \int_{\Omega_T} |u(z)|^2 \,\mu(dz) \\
& \leq N\int_{\Omega_T} |F(z)| |Du(z)| \,\mu(dz) + N\lambda^{1/2} \int_{\Omega_T} |f(z)| |u(z)| \,\mu(dz).
\end{split}
\]
Then by Young's inequality, we obtain \eqref{L2-lemma-est}.

From \eqref{L2-lemma-est}, we see that the uniqueness follows. Now, to prove the existence of solution,  for each $k\in \mathbb{N}$, let
\begin{equation}
                \label{eq3.39}
\widehat{Q}_k = (-k^2 , \min\{k^2, T\}) \times B_k^+.
\end{equation}
We consider the equation
\begin{equation} \label{Oge-k.eqn}
x_d^\alpha (a_0 u_t+\lambda c_0 u) - D_i\big(x_d^\alpha (a_{ij} D_j u - F_i)\big)   =  \lambda^{1/2} x_d^\alpha f \quad \text{in} \  \widehat{Q}_k
\end{equation}
with the boundary conditions
\begin{equation} \label{bdr-Oge-k}
u = 0 \quad \text{on} \quad \partial_p  \widehat{Q}_k \setminus \{ x_d =0\} \quad \text{and} \quad
\lim_{x_d \rightarrow 0^+}x_d^\alpha (a_{dj} D_j u - F_d)  = 0.
\end{equation}
where $\partial_p  \widehat{Q}_k $ is the parabolic boundary of $\widehat{Q}_k$. By Galerkin's method, for each $k$, there exists a unique weak solution $u_k \in \cH_2^1(\widehat{Q}_k, \mu)$ to  \eqref{Oge-k.eqn}-\eqref{bdr-Oge-k}. By taking $u_k =0$ on $\Omega_T \setminus  \widehat{Q}_k$, we also have
\[
\begin{split}
 &\sup_{t\in ((-\infty,T)} \|u_{k}(t,\cdot)\|_{L_2(\bR^{d}_+, \mu)} + \|D u_k\|_{L_2(\Omega_T,\mu)}
 +\lambda^{1/2}\|u_k\|_{L_2(\Omega_T,\mu)}\\
& \leq N \|F\|_{L_2(\Omega_T, \mu)} + N\|f\|_{L_2(\Omega_T,\mu)}.
\end{split}
\]
By the weak compactness, there is a subsequence which is still denoted by $\{u_k\}$ and $u\in \cH^1_2(\Omega_T,\mu)$ such that
$$
u_k\rightharpoonup u,\ Du_k\rightharpoonup Du
$$
weakly in $L_2(\Omega_T,\mu)$. By taking the limit in the weak formulation of solutions, it is easily seen that $u$ is a weak solution of  \eqref{eq3.23}. The lemma is proved.
\end{proof}

\section{Equations with simple coefficients} \label{div-sim-sec}

Throughout this section, let $\overline{a}_{ij}: \bR_+ \rightarrow \bR^{d\times d}$ be measurable functions, which satisfy the ellipticity and boundedness conditions: there is a constant $\kappa\in (0,1)$ such that
\begin{equation} \label{ellip-cond}
\kappa |\xi|^2 \leq \overline{a}_{ij}(x_d)\xi_i\xi_j, \quad \text{and} \quad |\overline{a}_{ij}(x_d)|\le \kappa^{-1}, \quad \forall\ \xi\in \bR^d, \quad x_d \in \bR_+.
\end{equation}
Let $\bar{a}_0, \bar{c}_0: \bR_+ \rightarrow \bR$ be measurable functions satisfying
\begin{equation}  \label{a-b.zero}
\kappa \leq \bar{a}_0(x_d), \ \bar{c}_0(x_d) \leq \kappa^{-1} \quad \text{for} \  x_d \in \bR_+.
\end{equation}
We study \eqref{eq3.23} in which the coefficients $a_{ij}$ are replaced with $\overline{a}_{ij}$. More precisely, we consider
\begin{equation} \label{simple-eqn}
\left\{
 \begin{aligned}
x_d^\alpha  (\overline{a}_0(x_d)u_t+\lambda  \overline{c}_0(x_d) u) - D_i\big(x_d^\alpha (\overline{a}_{ij}(x_d) D_j u - F_i)\big)   & =   \sqrt{\lambda} x_d^\alpha f\\
\lim_{x_d \rightarrow 0^+}x_d^\alpha (\overline{a}_{dj}(x_d) D_j u - F_d)  &= 0
\end{aligned} \right.
\end{equation}
in $\Omega_T$. The above equation is slightly different from \eqref{eq3.23} as there are coefficients $\overline{a}_0$ and $\overline{c}_0$ instead of the identity. We do not need this generality for the proofs of our main results for the divergence form equation \eqref{eq3.23}. However, the results below for \eqref{simple-eqn} are needed in the proofs of the main results for the non-divergence form equation \eqref{non-div.eqn} as in \cite{D-P}.

The main result of this section is the following theorem, which is a weak version of Theorem \ref{thm2}.
\begin{theorem} \label{thm3} Let $\alpha \in (-1, \infty)$, $p \in (1,\infty)$, and $\lambda >0$. Suppose that \eqref{ellip-cond} and \eqref{a-b.zero} are satisfied. Then for each $F \in L_p(\Omega_T,\mu)^d$ and $f \in  L_p(\Omega_T,\mu)$, there exists  a unique solution $u\in \cH^1_p(\Omega_T,\mu)$ of \eqref{simple-eqn}. Moreover,
\begin{equation} \label{thm3-est}
\|D u\|_{L_p(\Omega_T,\mu)}+\sqrt{\lambda}\|u\|_{L_p(\Omega_T,\mu)}
\leq N \|F\|_{L_p(\Omega_T, \mu)} + N\|f\|_{L_p(\Omega_T,\mu)},
\end{equation}
where $N = N(d, \alpha, \kappa, p)$.
\end{theorem}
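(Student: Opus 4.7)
The plan is to establish Theorem \ref{thm3} by combining the $L_2$-solvability already provided by Lemma \ref{L-2-lemma} with a mean-oscillation argument in the spirit of \cite{Krylov,DK11}, adapted to the weighted setting with weight $x_d^\alpha$. Because the coefficients $\overline{a}_{ij}$, $\overline{a}_0$, $\overline{c}_0$ depend only on $x_d$, Assumption \ref{assump1} is trivially satisfied, and the equation has translation invariance in $(t,x')$. Hence if $u$ solves the homogeneous equation, so do $D_{x'}u$ and $\partial_t u$; this is the key reason one can work with $D_{x'}u$ (instead of the full gradient) and with the conormal quantity $\cU := \overline{a}_{dj}D_j u$, which behaves better across the boundary than $D_d u$ since the boundary condition reads $\lim_{x_d\to 0^+} x_d^\alpha \cU = 0$ on homogeneous solutions.

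The first major step is to prove interior and boundary $L_\infty$ and H\"older/Lipschitz estimates for weak solutions of the homogeneous equation
\begin{equation*}
 x_d^\alpha(\overline{a}_0 u_t+\lambda \overline{c}_0 u) - D_i\bigl(x_d^\alpha \overline{a}_{ij}(x_d) D_j u\bigr)=0
\end{equation*}
in half-cylinders $Q_r^+(z_0)$ with the conormal condition $\lim_{x_d\to 0^+} x_d^\alpha \overline{a}_{dj}D_j u = 0$. I would run a Moser iteration using the weighted Caccioppoli inequality (obtained by testing against $u\eta^2$ for a cutoff $\eta$) together with the weighted parabolic Sobolev embedding of Lemma \ref{Sobolev-imbed}; the key point here is that, thanks to Lemma \ref{Sobolev-imbed}, no weighted Poincar\'e inequality is needed, so the argument works for the full range $\alpha\in(-1,\infty)$. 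Differentiating in $t$ and in $x'$ (using that these preserve the equation), and then bootstrapping, I would pass from $L_\infty$ bounds on $u$ to $L_\infty$ bounds on $D_{x'}u$, $u_t$, and on $\cU$; note $D_d u = \overline{a}_{dd}^{-1}(\cU - \sum_{j<d}\overline{a}_{dj}D_j u)$ can be recovered algebraically. A Campanato-type iteration on the scaled integrals then yields H\"older (Schauder-type) estimates for $D_{x'}u$ and $\cU$ up to the boundary.

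With these interior/boundary regularity estimates in hand, the second major step is a mean-oscillation estimate: for any $z_0$, any small $r<R$, and any weak solution $u\in\cH_2^1$ with $F=f=0$ in $Q_R^+(z_0)$,
\begin{equation*}
 \fint_{Q_r^+(z_0)} \bigl(|D_{x'}u - (D_{x'}u)_{Q_r^+(z_0)}| + |\cU - (\cU)_{Q_r^+(z_0)}|\bigr)\,\mu(dz)
\le N(r/R)^{\sigma} \bigl(|D u|^2+\lambda|u|^2\bigr)_{Q_R^+(z_0)}^{1/2}
\end{equation*}
for some $\sigma>0$. Combined with the $L_2$ estimate from Lemma \ref{L-2-lemma}, a standard decomposition $u = v+w$ (where $v$ solves the homogeneous problem in a larger cylinder with the same boundary data as $u$, and $w$ absorbs $F$ and $f$) produces a bound on the sharp maximal function of $D_{x'}u$ and $\cU$ in terms of the Hardy--Littlewood maximal functions of $|F|^2+|f|^2$. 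Applying the Fefferman--Stein sharp function theorem and the Hardy--Littlewood maximal function theorem (both valid on the space of homogeneous type $(\bR^{d+1}_+,\mu,|\cdot|)$ with the parabolic quasidistance) upgrades the $L_2$ estimate to the $L_p$ estimate \eqref{thm3-est} for $D_{x'}u$ and $\cU$, and hence for the full $Du$ via the algebraic recovery of $D_d u$.

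The existence part then follows by the method of continuity, joining the equation to a simpler reference equation (e.g.\ constant coefficients) whose $L_p$ solvability is known; the a priori estimate just established is the key ingredient. For $p\ge 2$, one can approximate $F,f\in L_p(\Omega_T,\mu)$ by $L_2\cap L_p$ functions, solve via Lemma \ref{L-2-lemma}, and pass to the limit using \eqref{thm3-est}; for $p\in(1,2)$, use the dual equation (of the same form with transposed coefficient matrix, which falls under the same theorem for the conjugate exponent $p'>2$) and a duality argument. The main obstacle I expect is the Moser iteration/Lipschitz step: because $x_d^\alpha$ is not $A_2$ when $\alpha\ge 1$, the usual weighted Poincar\'e inequality is unavailable, so the iteration must be set up so that only the weighted Sobolev embedding of Lemma \ref{Sobolev-imbed} is used, and the bootstrap from $u$ to $\cU$ and $D_{x'}u$ must respect the conormal boundary condition near $\{x_d=0\}$ (this is where the specific structure $\lim x_d^\alpha\cU=0$ is essential, rather than the vanishing of $D_d u$).
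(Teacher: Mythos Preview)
Your proposal is essentially correct and follows the same overall strategy as the paper: Moser iteration (via the weighted Caccioppoli inequality and Lemma \ref{Sobolev-imbed}) to get $L_\infty$ bounds on homogeneous solutions, differentiation in $(t,x')$ plus a bootstrap argument integrating the equation for $x_d^\alpha\cU$ from the boundary to obtain Lipschitz control of $D_{x'}u$ and $\cU$, then a decomposition $u=v+w$ combined with a real-variable argument to pass from $p=2$ to $p>2$, and finally duality for $p\in(1,2)$. The paper phrases the real-variable step via an $L_\infty$/$L_2$ decomposition (Proposition \ref{Simple-approx}) rather than a mean-oscillation/Fefferman--Stein argument, but these are interchangeable variants; likewise, for the existence when $p>2$ the paper uses direct approximation from $L_2\cap L_p$ data rather than the method of continuity, but either works.

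One point that deserves care and that you leave implicit is the \emph{existence} step for $p\in(1,2)$. Duality gives the a priori estimate, but to produce a solution in $\cH^1_p$ you still have to check that the $L_2$-solution $u^{(k)}$ obtained from Lemma \ref{L-2-lemma} with truncated, compactly supported data actually lies in $\cH^1_p(\Omega_T,\mu)$ before the a priori estimate can be invoked to pass to the limit. Since $p<2$ and $\Omega_T$ is unbounded, this does not follow from H\"older's inequality. The paper handles it by proving a geometric decay estimate for $u^{(k)}$ on the annuli $\widehat Q_{2^{j+1}k}\setminus\widehat Q_{2^{j}k}$: one tests the equation against $u^{(k)}\eta_j$ for a sequence of cutoffs $\eta_j$ vanishing on $\widehat Q_{2^jk}$, uses that the data are supported in $\widehat Q_k$, and iterates the resulting energy inequality to get super-polynomial decay in $j$, which then dominates the growth of $\mu(\widehat Q_{2^{j+1}k})^{1/p-1/2}$. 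Your outline should include this step, or an equivalent device; the method of continuity by itself does not supply it.
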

The rest of the section is  devoted to the proof of this theorem. We need some preliminaries to prove it.

\subsection{Lipschitz and Schauder estimates for homogeneous equations}  
Let $\lambda \geq 0$, $z_0 =(t_0, x_0) \in \overline{\bR^{d+1}_+}$ and $r>0$. We study \eqref{simple-eqn} in $Q_r^+(z_0)$ when $F= 0$, $f =0$, i.e., the homogeneous parabolic equation
\begin{equation}
                    \label{eq11.52}
-x_d^\alpha ( \overline{a}_0(x_d) u_t + \lambda \overline{c}_0(x_d) u)+D_i(x_d^\alpha \overline{a}_{ij}(x_d)D_j u)=0
\end{equation}
in $Q_r^+(z_0)$ with the homogeneous conormal boundary condition
\begin{equation}
                    \label{eq12.01}
x_d^\alpha \overline{a}_{dj}(x_d)D_j u=0 \quad \text{if} \quad B_r(x_0) \cap \partial \bR^d_+ \not= \emptyset.
\end{equation}
Our goal is to derive Lipschitz and Schauder estimates for \eqref{eq11.52}-\eqref{eq12.01}. We begin with the following lemma.
\begin{lemma}[Caccioppoli type inequality]
                \label{lem1}
Let $r>0$, $z_0=(t_0,x_0)\in \overline{\bR^{d+1}_+}$, and $u\in \cH^1_{2}(Q^+_r(z_0), \mu)$ be a weak solution to \eqref{eq11.52}-\eqref{eq12.01}. Then we have
$$
\int_{Q^+_{r/2}(z_0)}(|Du|^2 +\lambda|u|^2)\, \mu(dz)
\le Nr^{-2}\int_{Q^+_{r}(z_0)}|u|^2\,\mu(dz)
$$
and
$$
\int_{Q^+_{r/2}(z_0)}|u_t|^2\, \mu(dz)
\le N r^{-2}\int_{Q^+_{r}(z_0)}(|Du|^2 + \lambda |u|^2)\,d\mu (dz),
$$
where $N(d,\alpha,\kappa)>0$.
\end{lemma}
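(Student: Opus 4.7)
\textbf{Proof proposal for Lemma \ref{lem1}.} The plan is to establish both inequalities by standard weighted Caccioppoli arguments, testing \eqref{eq11.52} with $\zeta^2 u$ and $\zeta^2 u_t$ respectively, where $\zeta(t,x)$ is a smooth cutoff equal to $1$ on $Q_{r/2}^+(z_0)$ and vanishing outside $Q_r^+(z_0)$ (except possibly on $\{x_d=0\}\cap B_r(x_0)$), with $\zeta(t_0-r^2,\cdot)\equiv 0$, $|D\zeta|\lesssim 1/r$, and $|\zeta_t|\lesssim 1/r^2$.

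For the first inequality, I would test \eqref{eq11.52} with $\zeta^2 u$. The spatial integration by parts produces a boundary term at $\{x_d=0\}$ that vanishes by the conormal condition \eqref{eq12.01}. Writing $\bar a_0 u u_t=\tfrac12\bar a_0\partial_t(u^2)$ and integrating by parts in $t$ contributes a nonnegative boundary term at $t=t_0$ (which yields the sup-in-$t$ control of $\int x_d^\alpha u^2$) and an error bounded by $r^{-2}\int_{Q_r^+} x_d^\alpha u^2$. Ellipticity applied to the symmetric part $\bar b_{ij}$ controls $\int x_d^\alpha |Du|^2\zeta^2$, while the antisymmetric part contributes nothing to the quadratic form $\bar\tau_{ij}D_juD_iu\equiv 0$, and the cutoff-gradient cross term is absorbed by Young's inequality. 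This gives the first inequality.

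For the second inequality, since a priori $u_t\in\bH_2^{-1}$, I would pass to the Steklov averages $u^h(t,x)=h^{-1}\int_t^{t+h}u(s,x)\,ds$; because the coefficients are $t$-independent, $u^h$ satisfies \eqref{eq11.52}--\eqref{eq12.01} in a slightly smaller cylinder and $(u^h)_t\in L_2$. Writing $v^h=(u^h)_t$ and testing the equation for $u^h$ with $\zeta^2 v^h$: the lower-order piece becomes $\tfrac\lambda 2\bar c_0\partial_t((u^h)^2)$, which after $t$-integration by parts yields a nonnegative boundary term and an error bounded by $\lambda r^{-2}\int x_d^\alpha (u^h)^2$ (this rewriting is what produces the correct $\lambda$-dependence on the right-hand side rather than $\lambda^2$); the symmetric part of $\int x_d^\alpha\bar a_{ij}D_ju^h D_iv^h\zeta^2\,dz$ becomes $\tfrac12\partial_t(\bar b_{ij}D_ju^h D_iu^h)$, contributing again a nonnegative boundary term and an error bounded by $r^{-2}\int x_d^\alpha|Du^h|^2$ after $t$-integration by parts; cutoff-gradient cross terms are absorbed by Young's inequality with a small parameter on $\int (v^h)^2\zeta^2 x_d^\alpha$. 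Sending $h\to 0$ transfers the bound to $u$.

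The main obstacle is the antisymmetric contribution $\int x_d^\alpha\bar\tau_{ij}D_ju^h D_iv^h\zeta^2\,dz$: time integration by parts here gives only the tautological identity (using $\bar\tau_{ij}D_ju^h D_iu^h\equiv 0$), and spatial integration by parts invokes $\partial_{x_d}\bar\tau_{dj}$, which is merely distributional when $\bar\tau$ is only measurable. My plan is to first mollify $\bar a_{ij}$ in $x_d$, establish the estimate for the mollified problem with constants depending only on $d,\kappa,\alpha$ (using $\bar\tau_{ij}D_{ij}u=0$ for smooth coefficients, the conormal condition \eqref{eq12.01} to annihilate boundary contributions at $\{x_d=0\}$ after the spatial integration by parts, and Young's inequality with small parameter to absorb the resulting commutator and cross terms), and then pass to the limit via the $L_2$-solvability of Lemma \ref{L-2-lemma}.
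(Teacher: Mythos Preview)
Your treatment of the first inequality, and your handling of the Steklov averaging, the symmetric spatial part, and the $\lambda$-term in the second inequality, are all correct and agree with the paper. The gap is in your plan for the antisymmetric contribution $\int\bar\tau_{ij}D_ju^hD_iv^h\,\zeta^2 x_d^\alpha\,dz$.

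After mollifying $\bar a_{ij}$ in $x_d$ and integrating by parts in $x_i$, the pointwise identity $\bar\tau_{ij}D_{ij}u=0$ does kill the second-order piece, but you are left with
\[
-\int \bar\tau_{dj}'(x_d)\,D_ju\,u_t\,\zeta^2 x_d^\alpha\,dz
\qquad\text{and}\qquad
-\alpha\int x_d^{-1}\bar\tau_{dj}(x_d)\,D_ju\,u_t\,\zeta^2 x_d^\alpha\,dz.
\]
The first of these involves $\bar\tau_{dj}'$, for which there is no bound uniform in the mollification parameter: the original $\bar a_{ij}$ is merely $L_\infty$ in $x_d$. Young's inequality with a small parameter cannot absorb a term whose coefficient blows up, so the estimate you obtain for the mollified problem will have constants depending on the mollification and the passage to the limit via Lemma \ref{L-2-lemma} does not recover $N=N(d,\alpha,\kappa)$.

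The paper avoids this altogether by exploiting a feature you have not used: since the coefficients are $t$-independent, $u_t$ is itself a weak solution of \eqref{eq11.52}--\eqref{eq12.01}. Hence the \emph{first} Caccioppoli inequality, applied to $u_t$ on nested cylinders, gives
\[
\int_{Q_{\rho'}^+(z_0)}|Du_t|^2\,\mu(dz)\le N(\rho''-\rho')^{-2}\int_{Q_{\rho''}^+(z_0)}|u_t|^2\,\mu(dz).
\]
When you test the equation for $u$ with $u_t\zeta^2$ (with $\zeta$ supported in $Q_{\rho'}^+$), you may then bound the antisymmetric term crudely by $N\|Du\|_{L_2(Q_{\rho'}^+,\mu)}\|Du_t\|_{L_2(Q_{\rho'}^+,\mu)}$, insert the display above, and use Young's inequality to reach
\[
\int_{Q_\rho^+(z_0)}|u_t|^2\,\mu(dz)\le C_\delta(\rho''-\rho)^{-2}\int_{Q_r^+(z_0)}(|Du|^2+\lambda|u|^2)\,\mu(dz)+\delta\int_{Q_{\rho''}^+(z_0)}|u_t|^2\,\mu(dz)
\]
for $r/2\le\rho<\rho''\le r$. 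A standard iteration in $\rho$ then absorbs the last term. Your Steklov averaging remains the right device to justify testing with $u_t\zeta^2$; only the mollification-in-$x_d$ step should be replaced.
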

\begin{proof} The proof is more or less standard. For the first inequality, we test the equation with $u\zeta^2$, where $\zeta\in C_0^\infty$ is a smooth function, $\zeta=1$ in $Q_{r/2}(z_0)$, and $\zeta=0$ near the parabolic boundary $\partial_p Q_r(z_0)$. For the second inequality, we test the equation with $u_t\zeta^2$, and then use the fact that $u_t$ satisfies the same equation as $u$ and the first inequality applied to $u_t$. See, for example, the proof of \cite[Lemma 3.3]{DK11}.  We omit the details.
\end{proof}

Next we prove the local boundedness of solutions of \eqref{eq11.52}-\eqref{eq12.01}.
\begin{lemma}[Local boundedness estimate]
                        \label{lem2}
Let $r>0$, $z_0\in \overline{\bR^{d+1}_+}$, and $u\in \cH^1_{2}(Q^+_r(z_0), \mu)$ be a weak solution to \eqref{eq11.52}-\eqref{eq12.01}. Then we have
$$
\|u\|_{L_\infty(Q_{r/2}^+(z_0))}
\le N\Big(\fint_{Q^+_{r}(z_0)}|u(t,x)|^2\, \mu(dz)\Big)^{1/2},
$$
where $N=N(d,\alpha,\kappa)>0$.
\end{lemma}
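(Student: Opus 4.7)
The plan is to prove this by a standard Moser iteration, combining a Caccioppoli-type energy inequality (obtained by testing with a power of $u$ times a cutoff) with the weighted parabolic Sobolev embedding of Lemma \ref{Sobolev-imbed}. Fix $z_0 \in \overline{\bR^{d+1}_+}$ and $r>0$. For $r/2\le r'<r''\le r$, pick $\zeta\in C_0^\infty((t_0-(r'')^2,t_0]\times B_{r''}(x_0))$ with $\zeta\equiv 1$ on $Q_{r'}^+(z_0)$, $\zeta=0$ near the parabolic boundary of $Q_{r''}^+(z_0)$ (but not on $\{x_d=0\}$, so that the conormal condition can be exploited), $|D\zeta|\le N/(r''-r')$ and $|\partial_t\zeta|\le N/(r''-r')^2$. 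Since $(\overline{a}_{ij})$, $\overline{a}_0$, $\overline{c}_0$ depend only on $x_d$, one can mollify $u$ in $(t,x')$ and truncate to $|u|\wedge M$ to justify the use of $\varphi=u|u|^{p-2}\zeta^2$ as a test function (then pass to the limit $M\to\infty$).

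Testing the weak formulation of \eqref{eq11.52}--\eqref{eq12.01} with $\varphi$ for any $p\ge 2$, using \eqref{ellip-cond}--\eqref{a-b.zero}, dropping the nonnegative contribution $\lambda\int \overline c_0|u|^p\zeta^2\,\mu(dz)\ge 0$, and applying Cauchy--Schwarz with a small parameter to absorb the gradient terms, I would obtain an energy inequality of the form
\begin{equation*}
\sup_{t}\int_{B_{r''}^+(x_0)}\!\!|u|^p\zeta^2\,\mu(dx)
+\int_{Q_{r''}^+(z_0)}\!\!|u|^{p-2}|Du|^2\zeta^2\,\mu(dz)
\le Np^2\!\!\int_{Q_{r''}^+(z_0)}\!\!|u|^p\bigl(|D\zeta|^2+|\partial_t\zeta|\bigr)\mu(dz).
\end{equation*}
Setting $w=|u|^{p/2}\zeta$, we have $|Dw|^2\le 2|u|^p|D\zeta|^2+\tfrac{p^2}{2}|u|^{p-2}|Du|^2\zeta^2$, so both $\sup_t\int w^2\,\mu(dx)$ and $\int|Dw|^2\,\mu(dz)$ are controlled by the right-hand side above. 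Plugging into the weighted parabolic Sobolev embedding of Lemma \ref{Sobolev-imbed} applied to $w$ on $Q_{r''}^+(z_0)$, I obtain
\begin{equation*}
\Bigl(\fint_{Q_{r''}^+(z_0)}|u|^{pl_0}\zeta^{2l_0}\,\mu(dz)\Bigr)^{1/l_0}
\le \frac{Np^2\,r^2}{(r''-r')^2}\fint_{Q_{r''}^+(z_0)}|u|^p\,\mu(dz),
\end{equation*}
where $l_0>1$ is the Sobolev exponent from Lemma \ref{Sobolev-imbed} (taken, e.g., as $l_0=(d+\alpha_++2)/(d+\alpha_+)$ when $d+\alpha_+>2$, and any $l_0\in(1,2)$ otherwise).

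Now iterate in the usual Moser fashion. Choose $p_k=2 l_0^k$ and $r_k=(r/2)(1+2^{-k})$, so $r_k-r_{k+1}\sim r\,2^{-k}$. The displayed inequality above yields
\begin{equation*}
\Bigl(\fint_{Q_{r_{k+1}}^+(z_0)}|u|^{p_{k+1}}\,\mu(dz)\Bigr)^{1/p_{k+1}}
\le (N 4^k p_k^2)^{1/p_k}
\Bigl(\fint_{Q_{r_k}^+(z_0)}|u|^{p_k}\,\mu(dz)\Bigr)^{1/p_k}.
\end{equation*}
Since $\sum_k\log(N 4^k p_k^2)/p_k<\infty$, iterating and sending $k\to\infty$ gives the desired bound
$\|u\|_{L_\infty(Q_{r/2}^+(z_0))}\le N\bigl(\fint_{Q_r^+(z_0)}|u|^2\,\mu(dz)\bigr)^{1/2}$, with a constant $N$ depending only on $d,\alpha,\kappa$ (through the Sobolev constant in Lemma \ref{Sobolev-imbed} and the Caccioppoli constants).

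The main obstacle is a technical one rather than conceptual: rigorously justifying the test function $u|u|^{p-2}\zeta^2$ when $u\in\cH^1_2$ only, which I would handle by the truncation/mollification described above, together with a careful use of the energy identity $\int_{t_0-r^2}^{t_0}\langle u_t,u|u|^{p-2}\zeta^2\rangle=\tfrac{1}{p}\int_{t_0-r^2}^{t_0}\partial_t\!\int|u|^p\zeta^2\overline a_0\,\mu(dx)+\text{l.o.t.}$ valid for $\cH^1_2$ functions against admissible test functions supported away from the top time slice. Apart from this, everything is bookkeeping: the $x_d^\alpha$ weight plays no special role beyond being absorbed into $\mu(dz)$, because Lemmas \ref{lem1} and \ref{Sobolev-imbed} already encode the degenerate/singular nature of the problem.
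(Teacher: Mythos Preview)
Your proposal is correct and follows essentially the same approach as the paper: Moser iteration combining the Caccioppoli-type inequality with the weighted parabolic embedding of Lemma~\ref{Sobolev-imbed}. The only cosmetic difference is that the paper tests with $\phi^2 u_+^{\beta-1}$ (and then repeats for $u_-$) rather than $u|u|^{p-2}\zeta^2$, obtaining a linear factor $\beta$ in the energy inequality instead of your $p^2$; this does not affect the convergence of the iteration.
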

\begin{proof}
We use the Moser iteration.  For elliptic equations, similar argument was also used in \cite{Sire-1}. By a scaling, we only need to prove the lemma when $r=1$. For each $R, \rho \in (0, 1]$ with $\rho < R$, let $\phi \in C_0^\infty((t_0-R^2,t_0+R^2)\times B_R(x_0))$ be a cut-off function satisfying
\[
\phi =1 \ \text{in} \ Q_\rho(z_0), \quad 0\leq \phi \leq 1,
\quad  \text{and} \  |D \phi|^2 + |\partial_t \phi| \leq \frac{N(d)}{(R-\rho)^2} \quad \text{in} \  Q_R(z_0).
\]
Let $w = u_+$. For $\beta \geq 2$,  using $\phi^2 w^{\beta-1}$ as a test function for the equation \eqref{eq11.52} and using \eqref{ellip-cond}, we obtain
\[
\begin{split}
& \frac{d}{dt} \int_{B_R^+(x_0)} \bar a_0(x_d)w^\beta \phi^2 \,\mu(dx) + \frac{4\kappa (\beta-1)}{\beta}\int_{B_R^+(x_0)} |D(w^{\beta/2})|^2 \phi^2 \,\mu(dx) \\
& \leq 2  \beta \int_{B_R^+(x_0)} \bar a_0 (x_d) w^{\beta} \phi |\phi_t| \,\mu(dx) +4 d\kappa^{-1} \int_{B_R^+(x_0)} |D (w^{\beta/2})| |D\phi| \phi w^{\beta/2} \,\mu(dx),
\end{split}
\]
where we used the fact that $\lambda \bar c_0(x_d) u \phi^2 w^{\beta-1} \geq 0$. As $\beta \geq 2$, we have $\frac{\beta-1}{\beta} \geq \frac{1}{2}$. It then follows that
\[
\begin{split}
& \frac{d}{dt} \int_{B_R^+(x_0)} \bar a_0(x_d)w^\beta \phi^2 \,\mu(dx) + 2\kappa \int_{B_R^+(x_0)} |D(w^{\beta/2})|^2 \phi^2 \,\mu(dx) \\
& \leq 2 \beta \int_{B_R^+(x_0)} \bar a_0(x_d) w^{\beta} \phi |\phi_t| \,\mu(dx) + 4d\kappa^{-1} \int_{B_R^+(x_0)} |D w^{\beta/2}| |D\phi| \phi w^{\beta/2} \,\mu(dx).
\end{split}
\]
By applying Young's inequality to the last term and then cancelling similar terms, we have
\[
\begin{split}
& \frac{d}{dt} \int_{B_R^+(x_0)} \bar a_0(x_d)w^\beta \phi^2 \,\mu(dx)
+ \int_{B_R^+(x_0)} |D(w^{\beta/2} \phi)|^2 \,\mu(dx) \\
& \leq N  \beta \int_{B_R^+(x_0)} {w}^{\beta} \big( |\phi_t| +|D\phi|^2\big)\,\mu(dx),
\end{split}
\]
where $N=N(d,\kappa)$ and we used \eqref{a-b.zero}. Integrating this  estimate with respect to $t$ on $(t_0-R^2, t_0)$ and using \eqref{a-b.zero} again, we find that
\[
\begin{split}
& \sup_{t \in (t_0-R^2, t_0)}\fint_{B_R^+(x_0)} w^{\beta} \phi^2 \,\mu(dx) + R^2 \fint_{Q_R^+(z_0)} |D(w^{\beta/2} \phi)|^2 \,\mu(dz) \\
 & \leq \frac{N(d, \kappa) \beta }{(R-\rho)^2}\fint_{Q_R^+(z_0)}w^{\beta}  \,\mu(dz).
 \end{split}
\]
From this estimate and Lemma \ref{Sobolev-imbed},  it follows that
\begin{equation} \label{Moser-46}
\bigg( \fint_{Q_\rho^+(z_0)} w^{\beta l_0} \,\mu(dz) \bigg)^{\frac{1}{\beta l_0}} \leq \bigg( \frac{N}{R-\rho}\bigg)^{\frac{2}{\beta}} \beta^{\frac 1 \beta} \bigg( \fint_{Q_R^+(z_0)} w^{\beta}\,\mu(dz) \bigg)^{\frac{1}{\beta}}.
\end{equation}
We now choose a sequence of radii
\[
r_0 = 1, \quad r_{k+1} = \frac{r_k + {1}/{2}}{2},
\]
and  a sequence of exponents
\[
\beta_0 = 2, \ \beta_{k+1} =  \beta_k l_0 ,  \quad  k = 0, 1, 2, \ldots,
\]
such that
\[
\lim_{k\rightarrow \infty} r_k = \frac{1}{2}, \quad \lim_{k\rightarrow \infty} \beta_k = \infty, \quad \text{and} \quad r_{k} - r_{k+1} = \frac{1}{2^{k+2}}, \quad k = 0, 1,2,\ldots
\]
By applying \eqref{Moser-46} with $R = r_k$, $\rho = r_{k+1} < R$, and $\beta = \beta_k$, we have
\[
\bigg( \fint_{Q_{r_{k+1}(z_0)}^+} w^{\beta_{k+1}} \,\mu(dz) \bigg)^{\frac{1}{\beta_{k+1}}} \leq \big(4N\big)^{\frac{2}{\beta_k}} 2^{\frac{2k}{\beta_k}} \beta_k^{\frac{1}{\beta_k}} \bigg( \fint_{Q_{r_k}(z_0)} w^{\beta_k}\,\mu(dz) \bigg)^{\frac{1}{\beta_k}}.
\]
By iterating this estimate, we obtain
\begin{equation} \label{k-Moser}
\bigg( \fint_{Q_{r_{k+1}}^+(z_0)} w^{\beta_{k+1}} \,\mu(dz) \bigg)^{\frac{1}{\beta_{k+1}}} \leq M_k \bigg( \fint_{Q_1^+(z_0)} w^{2}\,\mu(dz) \bigg)^{\frac{1}{2}},
\end{equation}
where
\[
M_k = (4N)^{\sum_{j=0}^k{2}/{\beta_j}}  2^{\sum_{j=0}^k{2j}/{\beta_j}} \prod_{j=0}^k\beta_j^{\frac{1}{\beta_j}}.
\]
As
\[ \sum_{j=0}^{\infty} 2/\beta_j <\infty, \quad
\sum_{j=0}^{\infty} 2 j/\beta_j <\infty, \quad \text{and} \quad
 \prod_{j=0}^\infty\beta_j^{\frac{1}{\beta_j}}<\infty,\]
we conclude that $\{M_k\}_k$ is convergent. Therefore, by sending $k \rightarrow \infty$, we deduce from \eqref{k-Moser} that
\[
\| u_+\|_{L_\infty(Q_{1/2}^+(z_0))} \leq N \bigg( \fint_{Q_1^+(z_0)} u_+^{2}(t,x)\,\mu(dz) \bigg)^{\frac{1}{2}}.
\]
With the same argument, we can get a similar estimate for $u_{-} = \max\{-u,0\}$. Hence,
\[
\| u\|_{L_\infty(Q_{1/2}^+(z_0))} \leq N \bigg( \fint_{Q_1^+(z_0)} |u(t,x)|^{2}\,\mu(dz) \bigg)^{\frac{1}{2}}.
\]
The lemma is proved.
\end{proof}

We recall that for $\beta \in (0, 1]$ and each parabolic cylinder $Q \subset \bR^{d+1}$, the $\beta$-H\"{o}lder semi-norm of a function $f$ in $Q$ is defined as
\[
[f]_{C^{\beta/2, \beta}(Q)} = \sup_{\substack{(t,x), (s,y) \in Q\\ (t,x) \not=(s,y)}} \frac{|f(t,x)-f(s,y)|}{|t-s|^{\beta/2} + |x-y|^{\beta}}.
\]
The following proposition is the key step of the proof.
\begin{proposition}  \label{prop1} Let $q \in (1, 2]$, $r>0$, $z_0\in \overline{\bR^{d+1}_+}$, and $u\in \cH^1_{2}(Q^+_r(z_0), \mu)$ be a weak solution to \eqref{eq11.52}-\eqref{eq12.01}. Then we have
\begin{equation}
                                \label{eq5.51}
                                \begin{split}
&\|Du\|_{L_\infty(Q_{r/2}^+(z_0))} +\sqrt \lambda \|u\|_{L_\infty(Q_{r/2}^+(z_0))}\\
& \le N\Big(\fint_{Q^+_{r}(z_0)}\big(|Du|^q  + \lambda^{q/2} |u|^q \big)\,\mu(dz)\Big)^{1/q}
\end{split}
\end{equation}
and
\begin{equation}
                                        \label{eq5.52}
                                        \begin{split}
& [D_{x'}u]_{C^{1/2,1}(Q_{r/2}^+(z_0))}+[\mathcal{U}]_{C^{1/2,1}(Q_{r/2}^+(z_0))}
+\sqrt\lambda[u]_{C^{1/2,1}(Q_{r/2}^+(z_0))}\\
& \le Nr^{-1}\Big(\fint_{Q^+_{r}(z_0)}\big(|Du|^{q}  +  \lambda^{q/2} |u|^q\big) \,\mu(dz)\Big)^{1/q},
\end{split}
\end{equation}
where $\mathcal{U}=\overline{a}_{dj}(x_d)D_j u$ and $N = N(d, \alpha,\kappa, q)$.
\end{proposition}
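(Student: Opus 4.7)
The strategy is to exploit the symmetry that, because $\bar a_0$, $\bar c_0$, and $(\bar a_{ij})$ depend only on $x_d$, any tangential-time derivative $D_{x'}^{\beta}\partial_t^{\ell} u$, and also $\sqrt\lambda\,u$ itself, is again a weak solution of \eqref{eq11.52}--\eqref{eq12.01} on a slightly smaller cylinder, with the same conormal boundary condition. After a parabolic rescaling that reduces to $r=1$ and $z_0=0$ (and, if needed, an additional rescaling $(t,x)\mapsto(\tau^2 t,\tau x)$ that keeps the structural constants intact and rescales $\lambda\mapsto\tau^2\lambda$ so one may treat $\lambda$ uniformly), this observation drives everything.

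I would then iterate the Caccioppoli estimate of Lemma \ref{lem1} and the Moser local boundedness of Lemma \ref{lem2} along a finite chain of shrinking cylinders: each application of Lemma \ref{lem1} trades an $L_2$-norm of $Dv$ for the $L_2$-norm of $v$, each application of Lemma \ref{lem2} upgrades $L_2$ to $L_\infty$, and because $D_{x'_k}u$, $u_t$, $D_{x'_k}u_t$, $D_{x'_k}D_{x'_l}u$, and $\sqrt\lambda\,u$ are each solutions, this yields
\[
\sum_{|\beta|+\ell\le 2}\bigl\|D_{x'}^{\beta}\partial_t^{\ell} u\bigr\|_{L_\infty(Q_{7/8}^+)}+\sqrt\lambda\,\|u\|_{L_\infty(Q_{7/8}^+)}\le N\bigl(\|Du\|_{L_2(Q_1^+)}+\sqrt\lambda\,\|u\|_{L_2(Q_1^+)}\bigr).
\]
This controls $u_t$, $\sqrt\lambda u$, and all pure tangential first and second derivatives of $u$ in $L_\infty$, but still not $D_d u$.

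The normal derivative is recovered from a first-order equation for $\cU$. Pointwise rewriting of \eqref{eq11.52}, combined with the algebraic identity $\bar a_{dd}D_{x'_i}D_d u=D_{x'_i}\cU-\sum_{k<d}\bar a_{dk}D_{x'_i}D_{x'_k}u$ used to eliminate the cross terms $\bar a_{id}D_{x'_i}D_d u$, yields
\[
D_d(x_d^{\alpha}\cU)+x_d^{\alpha}\sum_{i<d}\frac{\bar a_{id}}{\bar a_{dd}}D_{x'_i}\cU=x_d^{\alpha}G,
\]
where $G$ is built only from $u_t$, $\lambda u$, and tangential second derivatives of $u$, all already controlled in $L_\infty$ by the previous step. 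The conormal boundary condition supplies $x_d^{\alpha}\cU\to 0$ as $x_d\to 0^+$, so integrating along the characteristics of the bounded tangential drift $(\bar a_{id}/\bar a_{dd})_{i<d}$ from $x_d=0$ gives both
\[
\|\cU\|_{L_\infty(Q_{1/2}^+)}+\|\cU/x_d\|_{L_\infty(Q_{1/2}^+)}\le N\bigl(\|Du\|_{L_2(Q_1^+)}+\sqrt\lambda\,\|u\|_{L_2(Q_1^+)}\bigr),
\]
after which the identity $\bar a_{dd}D_d u=\cU-\sum_{k<d}\bar a_{dk}D_{x'_k}u$ with $\bar a_{dd}\ge\kappa$ delivers the $L_\infty$ bound on $D_d u$, establishing \eqref{eq5.51} with $q=2$. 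The passage to general $q\in(1,2]$ is the standard shrinking-radii interpolation $\|v\|_{L_2}\le\|v\|_{L_\infty}^{1-q/2}\|v\|_{L_q}^{q/2}$ combined with Young's inequality, iterated on a geometric sequence of radii.

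For \eqref{eq5.52}, applying \eqref{eq5.51} to the solutions $D_{x'_k}u$, $u_t$, and $\sqrt\lambda\,u$ yields $L_\infty$-control of their full gradients, hence spatial Lipschitz and (a fortiori) $C^{1/2}$-in-$t$ regularity. For $\cU$, the spatial tangential derivative $D_{x'_k}\cU$ equals $\cU$ computed with $D_{x'_k}u$ in place of $u$ and is therefore controlled by the previous step applied to that solution; the normal derivative is read off from $D_d\cU=-\alpha\cU/x_d+G-\sum_{i<d}(\bar a_{id}/\bar a_{dd})D_{x'_i}\cU$, each summand of which has been bounded; and time regularity of $\cU$ follows by applying the same argument to the solution $u_t$. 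The main obstacle I expect is the third paragraph: one must verify that after the substitution for $\bar a_{id}D_{x'_i}D_d u$ the forcing $G$ is genuinely free of normal derivatives, handle the feedback of $D_{x'_i}\cU$ along the characteristics so that the Gronwall-type bound closes, and carry out the final $L_2$-to-$L_q$ passage while tracking $\|Du\|_{L_q}$ and $\sqrt\lambda\,\|u\|_{L_q}$ together with a $\lambda$-independent constant, which requires intertwining the rescaling step with the Moser/Caccioppoli iterations.
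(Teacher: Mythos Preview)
Your outline is correct and shares the paper's overall architecture: reduce to $r=1$ and $q=2$; use that $D_{x'}u$, $u_t$, and $\sqrt\lambda\,u$ are again solutions to push Lemmas~\ref{lem1} and~\ref{lem2} onto all tangential/time derivatives; then recover $D_du$ through the first–order equation satisfied by $\cU$; finally pass from $q=2$ to $q\in(1,2]$ by the standard radius iteration. The one substantive difference is in the $\cU$–step. The paper does \emph{not} substitute $\bar a_{id}D_{x_i'}D_du$ via $D_{x_i'}\cU$; it keeps the mixed term $DD_{x'}u$ on the right–hand side of
\[
D_d(x_d^\alpha\cU)=x_d^\alpha\Bigl(\bar a_0u_t+\lambda\bar c_0u-\sum_{i<d}\bar a_{ij}D_{ij}u\Bigr),
\]
integrates straight in $x_d$, and uses only the $L_2$ control of $DD_{x'}u$ coming from \eqref{eq1.16} together with H\"older's inequality. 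This first pass yields the crude bound $|\cU|\le Nx_d^{(1-\alpha)/2}$ (harmless for $\alpha\le 1$, singular for $\alpha>1$); feeding this into $|Du|$ and then into $|DD_{x'}u|$ and iterating finitely many times upgrades it to $|\cU|\le Nx_d$. Your route instead eliminates $D_dD_{x'}u$ algebraically, arriving at a genuine transport equation for $W=x_d^\alpha\cU$ with $L_\infty$ forcing, so a single integration along the characteristics of the drift $b_i=\bar a_{id}/\bar a_{dd}$ gives $|\cU|\le Nx_d$ directly, with no bootstrap. That is cleaner, and your worry about ``feedback'' is unfounded: once the substitution is made there is no Gronwall, only $W(\cdot,x_d)=\int_0^{x_d}s^\alpha G\,ds$ along each characteristic. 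The price you pay is that the characteristics drift in $x'$ by $O(\kappa^{-2}x_d)$, so to keep them inside the cylinder where $G$ is controlled you must shrink the inner radius by a factor depending on $\kappa$ before covering back; the paper's straight $x_d$–integration avoids this bookkeeping. Finally, do not forget the purely interior case $x_{0d}\ge 2r$: there the conormal condition is absent and your characteristic integration from $x_d=0$ is unavailable, but the coefficients $x_d^\alpha\bar a_{ij}$ are uniformly nondegenerate on $Q_r(z_0)$, so the classical unweighted estimate (the paper's \eqref{eq11.31}) handles $D_du$ directly.
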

\begin{proof}  First of all, whenever the lemma is proved for $q =2$, the case $q \in (1,2)$ follows by a standard iteration. See, for example, \cite[pp. 80--82]{Giaq}. Therefore, we only consider the case when $q=2$. As before, we may assume that $r=1$. The bound of $\|u\|_{L_\infty(Q_{r/2}^+(z_0))}$ follows from Lemma \ref{lem2}. Since $D_{x'}u$ and $u_t$ satisfy the same equation as $u$, from Lemmas \ref{lem2} again we have
\begin{equation*}
\|D_{x'}u\|_{L_\infty(Q_{1/2}^+(z_0))}
\le N\Big(\fint_{Q^+_{2/3}(z_0)}|D_{x'}u|^2\,\mu(dz)\Big)^{1/2}
\end{equation*}
and
\begin{equation*}
\|u_t\|_{L_\infty(Q_{1/2}^+(z_0))}
\le N\Big(\fint_{Q^+_{2/3}(z_0)}|u_t|^2\,\mu(dz)\Big)^{1/2}.
\end{equation*}
To make this rigorous, we need to use the finite-difference quotient and pass to the limit. These together with Lemma \ref{lem1} give
\begin{equation}
                            \label{eq1.41}
\begin{split}
& \|D_{x'}u\|_{L_\infty(Q_{1/2}^+(z_0))}
+\|u_t\|_{L_\infty(Q_{1/2}^+(z_0))}\\
& \le N\Big(\fint_{Q^+_{1}(z_0)}\big(|Du|^2 + \lambda |u|^2\big)\,\mu(dz)\Big)^{1/2}.
\end{split}
\end{equation}
Moreover, again from Lemma \ref{lem1}, we also have for any $i,j=0,1,2,\ldots$  satisfying $i+j\ge 1$,
\begin{equation}
                        \label{eq1.16}
\int_{Q^+_{1/2}(z_0)}\Big( |\partial_t^i D^j_{x'}u|^2+|\partial_t^i D^j_{x'}Du|^2\,\Big)\,\mu(dz)\le N\int_{Q^+_{1}(z_0)}\big(|Du|^2 +\lambda|u|^2\big)\,\mu(dz),
\end{equation}
where $N=N(d,\kappa,i,j)$.

Next we estimate $D_d u$. We first consider the boundary estimate and, without loss of generality, we take $z_0=0$. We use a bootstrap argument. Since $\mathcal{U}=\overline{a}_{dj}(x_d)D_j u$, from the equation we have
$$
D_d(x_d^\alpha \mathcal{U})=x_d^\alpha \big(\bar a_0(x_d)u_t + \lambda \bar c_0(x_d) u-\sum_{i=1}^{d-1}D_i(\bar a_{ij}D_ju) \big).
$$
By using the boundary condition and  H\"{o}lder's inequality, we get for any $z\in Q_1^+$,
\begin{align}
                                        \label{eq1.24}
&x_d^\alpha |\mathcal{U}|\le N\int_0^{x_d}s^\alpha (|u_t(z',s)| + \lambda |u(z',s)|+|DD_{x'}u(z',s)|)\,ds\\
&\le N\Big(\int_0^{x_d}s^\alpha (|u_t(z',s)|^2 + \lambda^2 |u(z',s)|^2 +|DD_{x'}u(z',s)|^2)\,ds\Big)^{\frac 1 2}
\Big(\int_0^{x_d}s^\alpha \,ds\Big)^{\frac 1 2}\nonumber.
\end{align}
Thus, when $x_d\in (0, 1/2]$, by the Sobolev embedding in the $z'$ variables, \eqref{eq1.16}, and Lemma \ref{lem1},  for an integer $k\ge (d+1)/4$,
\begin{align*}
&x_d^\alpha |\mathcal{U}|\\
&\le N\Big(\int_0^{1/2}s^\alpha (|u_t(z',s)|^2 + \lambda^2 |u(z',s)|^2 +|DD_{x'}u(z',s)|^2)\,ds\Big)^{\frac 1 2}
\Big(\int_0^{x_d}s^\alpha \,ds\Big)^{\frac 1 2}\\
& \le N\Big(\int_0^{1/2}s^\alpha (\|u_t(\cdot,s)\|_{W^{k,2k}_2(Q'_{1/2})}^2 + \lambda^2 \|u(\cdot,s)\|_{W^{k,2k}_2(Q'_{1/2})}^2 \\ &
\qquad  +\|DD_{x'}u(\cdot,s)\|_{W^{k,2k}_2(Q'_{1/2})}^2)\,ds\Big)^{\frac 1 2}
\Big(\int_0^{x_d}s^\alpha \,ds\Big)^{\frac 1 2}\\
&\le N\Big(\int_{Q_1^+}(|Du|^2 + \lambda |u|^2)\,\mu(dz)\Big)^{\frac 1 2}
x_d^{(\alpha+1)/2},
\end{align*}
which implies that
\begin{equation}
                    \label{eq2.12}
|\mathcal{U}|\le N\Big(\int_{Q_1^+}\big(|Du|^2  + \lambda |u|^2\big) \,\mu(dz)\Big)^{\frac 1 2}
x_d^{(1-\alpha)/2}\quad\text{in}\ Q_{1/2}^+.
\end{equation}
This together with \eqref{eq1.41} gives
$$
|Du|\le N\Big(\int_{Q_1^+}\big(|Du|^2  + \lambda |u|^2\big)\, \mu(dz)\Big)^{\frac 1 2}
x_d^{-(1-\alpha)_-/2}\quad\text{in}\ Q_{1/2}^+.
$$
Since $D_{x'} u$ satisfies the same equation,  by a covering argument and Lemma  \ref{lem1} we have
\begin{align}
                                        \label{eq1.48}
|DD_{x'}u|&\le N\Big(\int_{Q_{2/3}^+}\big(|DD_{x'}u|^2  {+ \lambda |D_{x'}u|^2} \big)\, \mu(dz)\Big)^{\frac 1 2}x_d^{-(1-\alpha)_-/2}\nonumber\\
&\le N\Big(\int_{Q_1^+} |D_{x'}u|^2)\, \mu(dz)\Big)^{\frac 1 2}
x_d^{-(1-\alpha)_-/2}\quad\text{in}\ Q_{1/2}^+.
\end{align}
Now we plug \eqref{eq1.41} and \eqref{eq1.48} into \eqref{eq1.24} and use Lemmas \ref{lem2} and \ref{lem1} to get
\begin{align*}
|\mathcal{U}|&\le Nx_d^{-\alpha} \int_0^{x_d}s^\alpha s^{-(1-\alpha)_-/2}\,ds
\Big(\int_{Q_1^+}\big( |Du|^2 {+ \lambda |u|^2} \big)\, \mu(dz)\Big)^{\frac 1 2}\\
&\le Nx_d^{1-(1-\alpha)_-/2}\Big(\int_{Q_1^+}\big(|Du|^2  {+ \lambda |u|^2} \big) \, \mu(dz)\Big)^{\frac 1 2}\quad\text{in} \ Q_{1/2}^+,
\end{align*}
which improves \eqref{eq2.12}.
Repeating this procedure, in finite many steps, we get
\begin{equation}
                        \label{eq7.48}
|\mathcal{U}|\le Nx_d\Big(\int_{Q_1^+}\big(|Du|^2  {+ \lambda |u|^2} \big)\, \mu(dz)\Big)^{\frac 1 2}
\end{equation}
and therefore
$$
|Du|\le N\Big(\int_{Q_1^+}\big(|Du|^2  {+ \lambda |u|^2} \big)\, \mu(dz)\Big)^{\frac 1 2}
\quad\text{in}\  Q_{1/2}^+,
$$
which gives \eqref{eq5.51} in this case.

In the interior case when $x_{0d}\ge 2r=2$, the coefficients ${\tilde{a}_{ij}(x_d)}=x_d^\alpha \bar a_{ij}(x_d)$ are nondegenerate in $Q_{2/3}(z_0)$ and independent of $z'$. By using the standard energy estimate (cf. \cite[Lemma 3.5]{DK11}), we also have
\begin{equation}
                        \label{eq11.31}
|Du|\le N\Big(\int_{Q_{1}(z_0)}\big(|Du|^2  {+ \lambda |u|^2} \big)\, dz\Big)^{\frac 1 2}
\quad\text{in}\ Q_{1/2}(z_0).
\end{equation}
Since in $Q_{1}(z_0)$, $x_d\sim x_{0d}$ so that $ \mu(dz)\sim x_{0d}^\alpha \, dz$, we also obtain \eqref{eq5.51} in the interior case.  Moreover, \eqref{eq1.41} still holds in this case. When $x_{0d}\in (0,2)$, \eqref{eq5.51} follows from a covering argument and the doubling property of $\mu$.

It remains to prove \eqref{eq5.52}.  By using \eqref{eq5.51} and \eqref{eq1.41}, we obtain the bound of the third term on the left-hand side of \eqref{eq5.52}. Since $D_{x'}u$ and $u_t$ satisfy the same equation as $u$, from  \eqref{eq5.51}, \eqref{eq1.16}, and Lemma \ref{lem1},
we have
\begin{align}
            \label{eq8.06}
&\|DD_{x'}u\|_{L_\infty(Q_{1/2}^+(z_0))}+\|Du_t\|_{L_\infty(Q_{1/2}^+(z_0))}\nonumber\\
&\le N\Big(\fint_{Q^+_{2/3}(z_0)} \big(|DD_{x'}u|^2+\lambda |D_{x'}u|^2
+|Du_t|^2+\lambda |u_t|^2\big)
\,\mu(dz)\Big)^{1/2} \nonumber\\
&\le N\Big(\fint_{Q^+_{1}(z_0)}\big(|Du|^2  + \lambda |u|^2\big)\,\mu(dz)\Big)^{1/2},
\end{align}
which yields
\begin{align*}
&[D_{x'}u]_{C^{1/2,1}(Q_{1/2}^+(z_0))}+\| \cU_t\|_{L_\infty(Q_{1/2}^+(z_0))}
+\|D_{x'}\cU\|_{L_\infty(Q_{1/2}^+(z_0))}\\
&\le N\Big(\fint_{Q^+_{1}(z_0)}\big(|Du|^2  + \lambda |u|^2 \big)\,\mu(dz)\Big)^{1/2}.
\end{align*}

To estimate $D_d  \cU$, we again discuss two cases. In the boundary case when $z_0=0$, from the equation we have
\begin{equation}
                                \label{eq8.12}
D_d \mathcal{U}= \overline{a}_0u_t +\lambda \overline{c}_0 u-\sum_{i=1}^{d-1}\bar a_{ij}D_{ij} u-\alpha x_d^{-1} \cU,
\end{equation}
which together with \eqref{eq1.41}, \eqref{eq7.48}, \eqref{eq8.06}, and Lemma \ref{lem2} gives
\begin{align}
                                \label{eq8.16}
\|D_{d}\mathcal{U}\|_{L_\infty(Q_{1/2}^+(z_0))}\le N\Big(\fint_{Q^+_{1}(z_0)}\big(|Du|^2  + \lambda |u|^2\big)\,\mu(dz)\Big)^{1/2}.
\end{align}
In the interior case when $x_{0d}\ge 2$, by \eqref{eq8.12}, \eqref{eq1.41}, \eqref{eq11.31}, \eqref{eq8.06}, and  Lemma \ref{lem2}, we still get \eqref{eq8.16}. This completes the proof of \eqref{eq5.52} and thus the proposition.
\end{proof}

From Lemma \ref{L-2-lemma} and Proposition \ref{prop1}, we obtain the following solution decomposition.
\begin{proposition} \label{Simple-approx} Let $z_0\in \overline{\Omega_T}$ and $r >0$. Suppose that $F \in L_{2}(Q_{2r}^+(z_0), \mu)^d$, $f \in L_{2}(Q_{2r}^+(z_0), \mu)$, and $u \in \cH^{1}_2(Q_{2r}^+(z_0),\mu)$ is a weak solution of \eqref{simple-eqn} in $Q_{2r}^+(z_0)$. Then we can write
\[
u(t, x) = v(t, x) + w(t, x) \quad  \text{in}\,\, Q_{2r}^+(z_0),
\]
where $v$ and $w$ are functions in $\cH_2^1(Q_{2r}^+(z_0), \mu)$ and satisfy
\begin{equation}  \label{0506-tU-est}
\fint_{Q_{2r}^+(z_0)}|V|^2  \,{\mu}(dz)  \leq N \fint_{Q_{2r}^+(z_0)}\Big( |F|^2 +  |f|^2) \,{\mu}(dz)
\end{equation}
and
\begin{align}\label{0506-W.est}
\|W\|_{L_\infty(Q_{r}^+(z_0))}^{2} 
\leq N  \fint_{Q_{2r}^+(z_0)} |U|^2 \,{\mu}(dz) + N \fint_{Q_{2r}^+(z_0)} \Big(|F|^2 + |f|^2\Big) \,{\mu}(dz),
\end{align}
 where $N = N(d,\kappa, \alpha)$ and
$$
V=|Dv|+\lambda^{1/2}|v|,\quad W=|Dw|+\lambda^{1/2}|w|,\quad U=|D u|+\lambda^{1/2}|u|.
$$
\end{proposition}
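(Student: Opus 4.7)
The plan is to realize the decomposition by splitting off the contribution from $F$ and $f$ into a function $v$ with an $L_2$-type bound, and then interpreting $w = u-v$ as a solution of the homogeneous equation so that Proposition~\ref{prop1} applies.

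First, I would construct $v$ by applying Lemma~\ref{L-2-lemma} on the full domain $\Omega_T$ with the data $F$ and $f$ extended by zero outside $Q_{2r}^+(z_0)$. Concretely, let $\widetilde F = F\chi_{Q_{2r}^+(z_0)}$ and $\widetilde f = f\chi_{Q_{2r}^+(z_0)}$, both extended by zero. Lemma~\ref{L-2-lemma} (applied with the simple coefficients $\bar a_{ij}, \bar a_0, \bar c_0$) produces a unique $v\in \cH^1_2(\Omega_T,\mu)$ solving
\begin{equation*}
\left\{
\begin{aligned}
x_d^\alpha(\bar a_0 v_t+\lambda \bar c_0 v) - D_i\bigl(x_d^\alpha(\bar a_{ij}D_j v - \widetilde F_i)\bigr) &= \sqrt{\lambda}\, x_d^\alpha \widetilde f, \\
\lim_{x_d\to 0^+} x_d^\alpha(\bar a_{dj}D_j v - \widetilde F_d) &= 0,
\end{aligned}
\right.
\end{equation*}
together with the estimate $\|Dv\|_{L_2(\Omega_T,\mu)} + \sqrt\lambda \|v\|_{L_2(\Omega_T,\mu)} \le N\|F\|_{L_2(Q_{2r}^+(z_0),\mu)} + N\|f\|_{L_2(Q_{2r}^+(z_0),\mu)}$. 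Dividing by $\mu(Q_{2r}^+(z_0))$ and squaring yields \eqref{0506-tU-est}.

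Next, set $w := u - v$ on $Q_{2r}^+(z_0)$. Since $u$ and $v$ satisfy the same inhomogeneous equation in this half-cylinder (with identical data $F$, $f$), the difference $w$ belongs to $\cH^1_2(Q_{2r}^+(z_0),\mu)$ and solves the homogeneous equation \eqref{eq11.52}--\eqref{eq12.01} in $Q_{2r}^+(z_0)$ (with the conormal boundary condition on the flat part $\{x_d=0\}\cap B_{2r}(x_0)$ when that set is nonempty, and no boundary condition otherwise). Thus Proposition~\ref{prop1} applies to $w$ on the scale $2r$ with $q=2$, giving
\begin{equation*}
\|Dw\|_{L_\infty(Q_r^+(z_0))} + \sqrt\lambda\|w\|_{L_\infty(Q_r^+(z_0))}
\le N\Bigl(\fint_{Q_{2r}^+(z_0)}\bigl(|Dw|^2+\lambda|w|^2\bigr)\,\mu(dz)\Bigr)^{1/2}.
\end{equation*}

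Finally, I would bound the right-hand side by $U$ and the data. Using $w=u-v$ and the triangle inequality,
\begin{equation*}
\fint_{Q_{2r}^+(z_0)}(|Dw|^2+\lambda|w|^2)\,\mu(dz) \le 2\fint_{Q_{2r}^+(z_0)}|U|^2\,\mu(dz) + 2\fint_{Q_{2r}^+(z_0)}|V|^2\,\mu(dz),
\end{equation*}
and the second term is controlled by \eqref{0506-tU-est}. This yields \eqref{0506-W.est}. The argument is essentially bookkeeping once the two main ingredients (Lemma~\ref{L-2-lemma} for $v$ and Proposition~\ref{prop1} for $w$) are in place; the only mild subtlety is the scaling from the unit scale of Proposition~\ref{prop1} to the radius $2r$, but since $\bar a_{ij}, \bar a_0, \bar c_0$ depend only on $x_d$ and the weight $\mu$ is homogeneous in the right way, the estimate scales directly. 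I do not anticipate any serious obstacle beyond verifying that extending $F, f$ by zero preserves the weak formulation, which is immediate from the definition.
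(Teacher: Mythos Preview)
Your proposal is correct and follows essentially the same approach as the paper: construct $v$ via Lemma~\ref{L-2-lemma} with the data extended by zero outside $Q_{2r}^+(z_0)$, set $w=u-v$, and apply Proposition~\ref{prop1} to the homogeneous solution $w$ together with the triangle inequality. The paper's proof is identical in structure and in the ingredients used.
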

\begin{proof}  Let $v \in \cH^1_2(\Omega_T, \mu)$ be a weak solution of the equation
\begin{equation*}
\begin{split}
& x_d^\alpha  ( {\overline{a}_0(x_d)}v_t  +\lambda {\overline{c}_0(x_d)} v)  - D_i
\big(x_d^\alpha(\overline{a}_{ij}(x_d) D_j v - F_{i}(z)\chi_{Q_{2r}^+(z_0)}(z))\big)  \\
& =  \lambda^{1/2} x_d^\alpha f (z) \chi_{Q_{2r}^+(z_0)}(z)  \quad \text{in} \  \Omega_T
\end{split}
\end{equation*}
with the boundary condition
\begin{equation*}
\lim_{x_d \rightarrow 0^+} x_d^\alpha (\overline{a}_{dj}(x_d) D_j v - F_d(z)\chi_{Q_{2r}^+(z_0)}(z)) =  0.
\end{equation*}
Then \eqref{0506-tU-est} follows from Lemma \ref{L-2-lemma}. Now let $w = u - v$ so that $w \in \cH_2^1(Q_{2r}^+(z_0))$ is a weak solution of
\[
x_d^\alpha  (\overline{a}_0(x_d) w_t +\lambda \overline{c}_0(x_d) w)   - D_i \big(x_d^\alpha \overline{a}_{ij}(x_d) D_j  w\big)   = 0 \quad \text{in} \  Q_{2r}^+(z_0)
\]
with the boundary condition
\[
\lim_{x_d\rightarrow 0^+} x_d^\alpha \overline{a}_{dj}(x_d) D_j  w = 0 \quad \text{if} \quad B_{2r}(x_0) \cap \partial \overline{\bR}^{d}_+ \not= \emptyset.
\]
By Proposition \ref{prop1} and the triangle inequality, we get \eqref{0506-W.est}.
The proof of the proposition is completed.
\end{proof}
\subsection{Proof of Theorem \ref{thm3}}
We are now ready to give the proof of Theorem \ref{thm3}.
\begin{proof}
When $p =2$, Theorem \ref{thm3} follows from Lemma \ref{L-2-lemma}. Therefore, we only need to consider the cases when $p \in (2,\infty)$ and $p \in (1,2)$.\\

\noindent
{\bf Case I}: $p \in (2,\infty)$. Let $u \in \cH_{2,\textup{loc}}^1(\Omega_T, \mu)$ be a weak solution of \eqref{simple-eqn}.  It follows from Proposition \ref{Simple-approx} that for every $z_0 \in \overline{\Omega}_T$ and $r >0$, we have the decomposition
\[
u =v+ w \quad \text{in} \  Q_{2r}^+(z_0),
\]
where $v$ and $w$ satisfy \eqref{0506-tU-est} and \eqref{0506-W.est}. Then \eqref{thm3-est} follows from the standard real variable argument. See, for example, \cite{DK11b}. We omit the details.

By \eqref{thm3-est}, the uniqueness of solutions follows. Hence, it remains to prove the existence of the solution. Recall the definition \eqref{eq3.39}. For $k=1,2,\ldots$, let $F^{(k)}=F(z) \chi_{\widehat Q_k}(z)$. Then $F^{(k)}\in L_2(\Omega_T,\mu)^{d}\cap L_p(\Omega_T,\mu)^{d}$ and by the dominated convergence theorem, $F^{(k)}\to F$ in $L_p(\Omega_T,\mu)$ as $k\to \infty$. Similarly, we define $\{f^{(k)}\}\subset L_2(\Omega_T,\mu)\cap L_p(\Omega_T,\mu)$. Let $u^{(k)} {\in \cH_2^1(\Omega_T, \mu)}$ be the weak solution of the equation \eqref{simple-eqn} with $F^{(k)}$ and $ f^{(k)}$ in place of $F$ and $f$, respectively. The existence of $u^{(k)}$ follows from Lemma \ref{L-2-lemma}. By the estimate \eqref{thm3-est}, we have $u^{(k)}\in \cH^{1}_p(\Omega_T,\mu)$. Moreover, by the strong convergence of $\{F^{(k)}\}$ and $\{f^{(k)}\}$ in $L_{p}(\Omega_T,\mu)$, we infer that
$\{u^{(k)}\}$ is a Cauchy sequence in $\cH^{1}_{p}(\Omega_T,\mu)$. Let $u\in \cH^{1}_{p}(\Omega_T,\mu)$ be its limit. Then, by passing to the limit in the weak formulation of solutions, it is easily seen that $u$ is a solution to the equation \eqref{simple-eqn}.\\

\noindent
{\bf Case II}: $p \in (1, 2)$. We use a duality argument. We first prove the estimate \eqref{thm3-est}. Let $q = {p}/(p-1) \in (2,\infty)$ and let $G \in L_q(\Omega_T, \mu)^d$ and $g \in L_q(\Omega_T, \mu)$. We consider the adjoint problem in $\bR \times \bR^d_+$
\begin{equation} \label{adj-eqn-sim}
\left\{
\begin{aligned}
x_d^\alpha(- \bar a_0 v_t + \lambda \bar c_0 v) - D_i\big(x_d^\alpha(\overline{a}_{ji}(x_d) D_{j} v - G_{i}\chi_{(-\infty, T)})\big) & =  \lambda^{1/2} x_d^\alpha g\chi_{(-\infty, T)},\\
\lim_{x_d\rightarrow 0^+}x_d^\alpha(\overline{a}_{jd} D_{j} v - G_d\chi_{(-\infty, T)}) & = 0.
\end{aligned} \right.
\end{equation}
By {\bf Case I}, there exists unique solution $v \in \cH^1_q(\bR \times \bR_+^d,  \mu)$ of the above equation, which satisfies
\begin{equation}
                            \label{eq10.35}
\int_{\bR \times \bR^d_+} \big(|Dv|^q + \lambda^{q/2} |v|^q \big) \,\mu(dz) \leq N\int_{\Omega_T} \big(|G|^q + |g|^q \big) \,\mu(dz).
\end{equation}
Moreover, by the uniqueness of solutions, we have $v =0$ for $t \geq T$. It follows from the equations \eqref{simple-eqn} and \eqref{adj-eqn-sim} that
\[
\int_{\Omega_T}\big(G\cdot \nabla u  + \lambda^{1/2} g u \big)\,\mu(dz)
= \int_{\Omega_T}\big(F\cdot \nabla v + \lambda^{1/2} f v \big)\,\mu(dz).
\]
Therefore,  by H\"older's inequality and \eqref{eq10.35},
\[
\begin{split}
& \left|\int_{\Omega_T}\big(G\cdot \nabla u + \lambda^{1/2} g u \big)\,\mu(dz)\right| \\
& \leq  \|F\|_{L_p(\Omega,\mu)} \|\nabla v\|_{L_q(\Omega_T, \mu)} + \lambda^{1/2} \|f\|_{L_{p}(\Omega_T, \mu)} \| v\|_{L_q(\Omega_T, \mu)}\\
& \leq N\Big(\|F\|_{L_p(\Omega,\mu)} + \|f\|_{L_{p}(\Omega_T, \mu)}\Big)
\Big( \|G\|_{L_q(\Omega_T, \mu)} + \| g\|_{L_q(\Omega_T, \mu)} \Big).
\end{split}
\]
From this last estimate and as $G$ and $g$ are arbitrary, we obtain \eqref{thm3-est}.

It now remains to prove the existence of solution $u \in \cH_p^1(\Omega_T, \mu)$. We proceed slightly differently  from {\bf Case I} and follow the argument in \cite[Section 8]{MR3812104}. For $i=1,2,\ldots, d$ and $k=1,2,\ldots$, let
$$
F^{(k)}_i=\max(-k,\min(k,F_i))\chi_{\widehat Q_k}.
$$
Then $F^{(k)}\in L_2(\Omega_T,\mu)^{d}\cap L_p(\Omega_T,\mu)^{d}$ and by the dominated convergence theorem, $F^{(k)}\to F$ in $L_p(\Omega_T,\mu)$ as $k\to \infty$. Similarly, we define $\{f^{(k)}\}\subset L_2(\Omega_T,\mu)\cap L_p(\Omega_T,\mu)$. By Lemma \ref{L-2-lemma}, there is a unique weak solution $u^{(k)}\in \cH^{1}_{2}(\Omega_T,\mu)$ to the equation \eqref{simple-eqn} with $F^{(k)}$ and $f^{(k)}$ in place of $F$ and $f$, respectively. As in {\bf Case I}, it suffices to prove that  $u^{(k)} \in \cH_p^1(\Omega_T,  \mu)$. Let us fix a $k \in \mathbb{N}$. Because ${\mu}$ is a doubling measure, there exists $N_0 = N_0(\alpha, d)>0$  such that
 \begin{equation}  \label{eq7.36}
{\mu}(\widehat Q_{2r})\le N_0{\mu}(\widehat Q_{r}), \quad \forall \ r >0.
\end{equation}
Since $u^{(k)}\in \cH^{1}_{2}(\Omega_T,\mu)$, by H\"older's inequality,
\begin{equation} \label{eq7.38}
 \|u^{(k)}\|_{L_p(\widehat Q_{2k},\mu)}+\|Du^{(k)}\|_{L_p(\widehat Q_{2 k} ,\mu)}<\infty.
\end{equation}
Therefore, it remains to prove that $\|u^{(k)}|\|_{L_p(\Omega_T\setminus \widehat{Q}_{2k})} <\infty$.  To this end, for $j\ge 0$,  let $\eta_j$ be such that
\begin{align*}
\eta_j&\equiv 0\quad \text{in}\,\, \widehat Q_{2^jk},  \quad \eta_j \equiv 1 \quad \text{outside}\,\, \widehat Q_{2^{j+1}k},
\end{align*}
and $|D \eta_j|\le C_0 2^{-j}, \quad |(\eta_j)_t|\le C_0 2^{-2j}$, where $C_0$ is independent of $j$.  Observe that the supports of $F^{(k)}$ and $f^{(k)}$ are in $\widehat{Q}_{{k}}$, while the supports of $\eta_j$ are all outside $\widehat{Q}_{{k}}$.  Consequently, $\eta_j F_i^{(k)} \equiv \eta_j f^{(k)}  \equiv F_i^{(k)} D_i\eta_j \equiv 0$ for every $i = 1, 2,\ldots, d$ and $j =0,1,\ldots$. Because of this, a simple calculation reveals that $w^{(k,l)}:=u^{(k)}\eta_l\in \cH^{1}_{2}(\Omega_T,\mu)$ is a weak solution of
\begin{equation*}
\left\{
\begin{aligned}
x_d^\alpha \big( \overline{a}_0 w^{(k,l)}_t  +\lambda  \overline{c}_0 w^{(k,l)}\big) - D_i\big(x_d^\alpha (\overline{a}_{ij} D_j w^{(k,l)} - F^{(k,l)}_i)\big)  & =  \lambda^{1/2} x_d^\alpha f^{(k,l)} \\
\lim_{x_d \rightarrow 0^+}x_d^\alpha(\overline{a}_{dj} D_j w^{(k,l)} - F^{(k,l)}_d) & =  0
\end{aligned} \right.
\end{equation*}
in $\Omega_T$, where
\[
\begin{split}
F^{(k,l)}_i & =  u^{(k)}\overline{a}_{ij}D_j \eta_l, \quad i = 1, 2,\ldots, d,\\
 f^{(k,l)} & = \lambda^{-1/2}\big(u^{(k)}(\eta_l)_t - \overline{a}_{ij} D_j u^{(k)} D_i\eta_l \big).
\end{split}
\]
Now, by applying the estimate \eqref{L2-lemma-est} to the above equation of $w^{(k,l)}$, we have
\begin{align*}
& \|Dw^{(k,l)} \|_{L_2(\Omega_T,\mu)}+ \sqrt{\lambda} \|w^{(k,l)}\|_{L_2(\Omega_T,\mu)}\\
& \le N \|F^{(k,j)}\|_{L_2(\Omega_T,\mu)}+N
\|f^{(k,l)}\|_{L_2(\Omega_T,\mu)} ,
\end{align*}
which implies that
\begin{align*}
&\|Du^{(k)}\|_{L_2(\widehat Q_{2^{j+2}k}\setminus \widehat Q_{2^{j+1}k},\mu)}+\sqrt{\lambda} \|u^{(k)}\|_{L_2(\widehat Q_{2^{j+2}k}\setminus \widehat Q_{2^{j+1}k},\mu)}\\
&\le
N\Big( 2^{-j}\|u^{(k)}\|_{L_2(\widehat Q_{2^{j+1}k}\setminus \widehat Q_{2^{j}k},\mu)}+ \lambda^{-1/2}2^{-2j}
\|u^{(k)}\|_{L_2(\widehat Q_{2^{j+1}k}\setminus \widehat Q_{2^{j}k},\mu)}\\
&\quad + \lambda^{-1/2}2^{-j}\|Du^{(k)}\|_{L_2(\widehat Q_{2^{j+1}k}\setminus \widehat Q_{2^{j}k},\mu)} \Big)\\
&\le C2^{-j}\big(\|Du^{(k)}\|_{L_2(\widehat Q_{2^{j+1}k}\setminus \widehat Q_{2^{j}k},\mu)}+\sqrt{\lambda} \|u^{(k)}\|_{L_2(\widehat Q_{2^{j+1}k}\setminus \widehat Q_{2^{j}k},\mu)}\big)
\end{align*}
for every $j \geq 1$, where $C$ also depends on $\lambda$, but is independent of $j$. By iterating the last estimate, we obtain
\begin{align}  \label{eq9.01}
  &\|Du^{(k)}\|_{L_2(\widehat Q_{2^{j+1}k}\setminus \widehat Q_{2^{j}k},\mu)}+\sqrt{\lambda} \|u^{(k)}\|_{L_2(\widehat Q_{2^{j+1}k}\setminus \widehat Q_{2^{j}k},\mu)}\nonumber\\
&\le C^j2^{-j(j-1)/2}\big(\|Du^{(k)}\|_{L_2(\widehat Q_{2k},\mu)}+ \sqrt{\lambda} \|u^{(k)}\|_{L_2(\widehat Q_{2k},\mu)}\big).
\end{align}
Finally, by H\"older's inequality, \eqref{eq7.36}, and \eqref{eq9.01}, we have
\begin{align*}
&\|Du^{(k)}\|_{L_p(\widehat Q_{2^{j+1}k}\setminus \widehat Q_{2^{j}k},\mu)}+\sqrt{\lambda} \|u^{(k)}\|_{L_p(\widehat Q_{2^{j+1}k}\setminus \widehat Q_{2^{j}k},\mu)}\\
&\le ({\mu}(\widehat Q_{2^{j+1}k}))^{\frac 1 p-\frac 1 2}\big(\|Du^{(k)}\|_{L_2(\widehat Q_{2^{j+1}k}\setminus \widehat Q_{2^{j}k},\mu)}+ \sqrt{\lambda} \|u^{(k)}\|_{L_2(\widehat Q_{2^{j+1}k}\setminus \widehat Q_{2^{j}k},\mu)}\big)\\
&\le N_0^{j(\frac 1 p-\frac 1 2)}({\mu}(\widehat Q_{2k}))^{\frac{1}{p}-\frac 1 2}C^j2^{-\frac {j(j-1)} 2}\big(\|Du^{(k)}\|_{L_2(\widehat Q_{2k},\mu)}+\sqrt{\lambda} \|u^{(k)}\|_{L_2(\widehat Q_{2k},\mu)}\big).
\end{align*}
Hence,
\[
\begin{split}
&\|Du^{(k)}\|_{L_p(\Omega_T \setminus \widehat Q_{2k},\mu)}+\sqrt{\lambda} \|u^{(k)}\|_{L_p(\Omega_T \setminus \widehat Q_{2k},\mu)}
\\
&=\sum_{j=1}^\infty \Big( \|Du^{(k)}\|_{L_p(\widehat Q_{2^{j+1}k}\setminus \widehat Q_{2^{j}k},\mu)}+\sqrt{\lambda} \|u^{(k)}\|_{L_p(\widehat Q_{2^{j+1}k}\setminus \widehat Q_{2^{j}k},\mu)}\Big)\\
& \leq N \|Du^{(k)}\|_{L_2(\widehat Q_{2k},\mu)}
+N\sqrt{\lambda} \|u^{(k)}\|_{L_2(\widehat Q_{2k},\mu)} < \infty.
\end{split}
\]
Using this estimate and \eqref{eq7.38}, we infer that $u^{(k)} \in \cH_p^1(\Omega_T)$. The theorem is proved.
\end{proof}

\section{Equations with partially VMO coefficients} \label{mea-sec}

In this section, we give the proofs of Theorem \ref{thm2}, Corollary \ref{main-thrm}, Theorem \ref{thm1.4}, and Corollary \ref{cor1.8}. We begin with the proof of  Theorem \ref{thm2}.

\subsection{Proof of Theorem \ref{thm2}}
We need  the following decomposition result for our proof.
\begin{proposition}
                            \label{G-approx-propos}
Let $\gamma_0 \in (0, 1)$, $\alpha \in (-1, \infty)$,  $r \in (0, \infty)$,  $z_0\in \overline{\Omega_T}$, and  $q \in (2,\infty)$. Suppose that $G=  |F| + |f| \in L_{2}(Q_{2r}^+(z_0), \mu)$ and $u \in \cH^{1}_q(Q_{2r}^+(z_0),\mu)$ is a weak solution of \eqref{eq3.23}. If \textup{Assumption \ref{assump1} ($\gamma_0, R_0$)} is satisfied and $\textup{spt}(u) \subset   (s - (R_0r_0)^2, s + (R_0r_0)^2) \times \bR^{d}_+$ for some $r_0>0$ and $s \in \bR$, then we have
\[
u(t, x) = v(t, x) + w(t, x) \quad  \text{in}\  Q_{2r}^+(z_0),
\]
where $v$ and $w$ are functions in $\cH_2^1(Q_{2r}^+(z_0), \mu)$ that satisfy
\begin{align} \nonumber
\fint_{Q_{2r}^+(z_0)} |V|^2 \,{\mu}(dz) &  \leq N \fint_{Q_{2r}^+(z_0)} |G|^{2} \,{\mu}(dz)   \\ \label{B-u-tilde-est-inter}
& \qquad +  N(\gamma_0^{1-2/q}+ r_0^{2-4/q}) \left(\fint_{Q_{2r}^+(z_0)} |Du|^q \,{\mu}(dz) \right)^{2/q}
\end{align}
and
\begin{align} \label{D-L-infty-w-inter}
\|W\|_{L_\infty(Q_{r}^+(z_0))}^{2} \leq N  \fint_{Q_{2r}^+(z_0)} |U|^{2}\,{\mu}(dz) +  N\fint_{Q_{2r}^+(z_0)}  |G|^{2} \,{\mu}(dz),
\end{align}
where
$$
V=|Dv|+ \sqrt{\lambda}|v|,\quad W=|Dw|+\sqrt{\lambda}|w|,\quad U=|D u|+\sqrt{\lambda}|u|,
$$
and
$N = N(d,\alpha, \kappa,  q)$.
\end{proposition}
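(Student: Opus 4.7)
The plan is to decompose $u = v + w$ on $Q_{2r}^+(z_0)$ by freezing the partially VMO coefficients to their $(t, x')$-average. After the standard parabolic rescaling $x \mapsto x/R_0$, $t \mapsto t/R_0^2$ I may assume $R_0 = 1$. Define $\bar a_{ij}(x_d) := [a_{ij}]_{2r,z_0}(x_d)$ (extended arbitrarily outside the relevant range of $x_d$ so as to satisfy \eqref{ellip-cond}), and rewrite \eqref{eq3.23} on $Q_{2r}^+(z_0)$ as the simple-coefficient equation with matrix $\bar a_{ij}(x_d)$ and forcing
$$
\tilde F_i := F_i - (a_{ij} - \bar a_{ij}) D_j u, \qquad f,
$$
together with the conormal boundary condition $\lim_{x_d \to 0^+} x_d^\alpha(\bar a_{dj} D_j u - \tilde F_d) = 0$. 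Using Lemma \ref{L-2-lemma} for the coefficients $\bar a_{ij}(x_d)$, let $v \in \cH^1_2(\Omega_T, \mu)$ be the unique weak solution on all of $\Omega_T$ with data $\tilde F \chi_{Q_{2r}^+(z_0)}$ and $f \chi_{Q_{2r}^+(z_0)}$; note $\tilde F \chi_{Q_{2r}^+(z_0)} \in L_2(\Omega_T,\mu)$ since $u \in \cH^1_q$ with $q > 2$ and $\mu(Q_{2r}^+(z_0)) < \infty$. Set $w := u - v$; by construction $w \in \cH_2^1(Q_{2r}^+(z_0), \mu)$ solves the homogeneous simple-coefficient equation with conormal boundary condition on $Q_{2r}^+(z_0)$.

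Estimate \eqref{D-L-infty-w-inter} is then nearly immediate. Applying Proposition \ref{prop1} (with $q = 2$) to $w$ gives
$$
\|W\|_{L_\infty(Q_r^+(z_0))}^2 \le N \fint_{Q_{2r}^+(z_0)} (U^2 + V^2)\, \mu(dz),
$$
and the $L^2$ a priori bound \eqref{L2-lemma-est} applied to $v$, combined with the crude estimate $|a_{ij}-\bar a_{ij}| \le 2\kappa^{-1}$, controls $\fint V^2$ by $N\fint G^2 + N\fint U^2$, which closes this inequality.

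For the sharper bound \eqref{B-u-tilde-est-inter}, the $F$- and $f$-contributions to \eqref{L2-lemma-est} for $v$ give the first term on the right-hand side directly; the only delicate point is the coefficient perturbation $(a-\bar a)Du$. Using that $Du$ is supported in $Q^* := (s - r_0^2, s + r_0^2) \times \bR^d_+$ and applying H\"older with the exponent pair $(q/2,\, q/(q-2))$, and then using the essential bound on $a-\bar a$ to downgrade the exponent $2q/(q-2)$ to $1$ inside the first factor,
$$
\int_{Q_{2r}^+} |a-\bar a|^2 |Du|^2 \, \mu(dz) \le N \bigg(\int_{Q_{2r}^+ \cap Q^*} |a-\bar a|\, \mu(dz)\bigg)^{(q-2)/q} \bigg(\int_{Q_{2r}^+} |Du|^q\, \mu(dz)\bigg)^{2/q}.
$$
I then split into two regimes. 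If $2r \le 1$, Assumption \ref{assump1} gives $\int_{Q_{2r}^+}|a-\bar a|\,\mu(dz) \le \gamma_0 \mu(Q_{2r}^+)$. If $2r > 1$, the bound $|a-\bar a| \le 2\kappa^{-1}$ and the support constraint give $\int_{Q_{2r}^+ \cap Q^*}|a-\bar a|\,\mu(dz) \le N\mu(Q_{2r}^+ \cap Q^*) \le N r_0^2 \mu(Q_{2r}^+)$, since the time length of $Q^*$ is $2r_0^2$ while that of $Q_{2r}^+$ is $4r^2$ with $r > 1/2$. Combining the two regimes, normalising by $\mu(Q_{2r}^+)$, and using the subadditivity $(a+b)^\theta \le 2(a^\theta + b^\theta)$ for $\theta = 1 - 2/q \in (0,1)$ yields
$$
\fint_{Q_{2r}^+} |a-\bar a|^2|Du|^2 \,\mu(dz) \le N\bigl(\gamma_0^{1-2/q} + r_0^{2-4/q}\bigr) \bigg(\fint_{Q_{2r}^+} |Du|^q\,\mu(dz)\bigg)^{2/q},
$$
from which \eqref{B-u-tilde-est-inter} follows.

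The main technical obstacle is precisely the joint handling of the small-scale regime (where the VMO Assumption \ref{assump1} supplies a factor of $\gamma_0$) and the large-scale regime (where the VMO control is lost, and only the time-support condition provides a gain of $r_0^2$). Choosing the H\"older pair $(q/2,\, q/(q-2))$ and downgrading the coefficient exponent by using boundedness of $a-\bar a$ is what makes both regimes land at the common exponent $1-2/q$, after which the two contributions can be combined additively.
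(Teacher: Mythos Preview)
Your proof is correct and follows essentially the same approach as the paper: freeze the coefficients to $[a_{ij}]_{2r,z_0}(x_d)$, solve the resulting simple-coefficient equation globally via Lemma~\ref{L-2-lemma} to produce $v$, apply Proposition~\ref{prop1} to the homogeneous remainder $w=u-v$, and control the perturbation term $(a-\bar a)Du$ in $L_2$ by H\"older with the pair $(q/2,\,q/(q-2))$, splitting into the small-radius regime (Assumption~\ref{assump1} gives $\gamma_0$) and the large-radius regime (the time-support condition gives $r_0^2$). The only cosmetic differences are that the paper keeps $R_0$ explicit rather than rescaling, and packages the perturbed forcing as $b_i=(a_{ij}-[a_{ij}])D_ju-F_i$ rather than your $\tilde F_i$.
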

\begin{proof}
For $i=1,2,\ldots, d$, let
\[
b_{i}(t,x) = \chi_{Q_{2r}^+(z_0)}(z) \big( a_{ij}(t,x) - {[a_{ij}]_{2r,z_0}}(x_d)\big)D_j u (t,x)  - F_i(z)\chi_{Q_{2r}^+(z_0)}(z),
\]
where $[a_{ij}]_{2r,z_0}(x_d)$ is defined in Assumption \ref{assump1}.
Observe that $b_i \in L_2(\Omega_T, {\mu})$. In particular, if $r \in (0,R_0/2)$,  it follows from H\"{o}lder's inequality  and Assumption \ref{assump1} ($\gamma_0, R_0$)
that
\begin{align*}
 & \fint_{Q_{2r}^+(z_0)} |b(z)|^2 \,{\mu}(dz)  \\
& \leq \left(\fint_{Q_{2r}^+(z_0)} |a_{ij} -[a_{ij}]_{2r,z_0}|^{\frac{2q}{q-2}} \,{\mu}(dz) \right)^{\frac{q-2}{q}} \left(\fint_{Q_{2r}^+(z_0)} |D u|^{q} \,{\mu}(dz) \right)^{\frac{2}{q}} \\ \nonumber
& \qquad +  \fint_{Q_{2r}^+(z_0)} | F |^{2} \,{\mu}(dz) \nonumber\\
& \leq N \gamma_0^{\frac{q-2}{q}} \left(\fint_{Q_{2r}^+(z_0)} | D u |^q \,{\mu}(dz) \right)^{2/q} + \fint_{Q_{2r}^+(z_0)} | F |^{2} \,{\mu}(dz) .
\end{align*}
On the other hand, when $r > R_0/2$, as $\text{spt}(u) \subset  (s - (R_0r_0)^2, s + (R_0r_0)^2) \times \bR^{d}_+$ and by the boundedness of $(a_{ij})$ in \eqref{ellipticity}, we have
\begin{align*} 
& \fint_{Q_{2r}^+(z_0)} |b(z)|^2 \,{\mu}(dz)  \\
& \leq N(\kappa) \left(\fint_{Q_{2r}^+(z_0)}  \chi_{(s - (R_0r_0)^2, s + (R_0r_0))^2}(t)  \,{\mu}(dz) \right)^{\frac{q-2}{q}} \left(\fint_{Q_{2r}^+(z_0)} |D u|^{q} \,{\mu}(dz) \right)^{\frac{2}{q}} \\
& \qquad + N \fint_{Q_{2r}^+(z_0)} | F |^{2} \,{\mu}(dz)  \nonumber\\
& \leq N \Big(\frac{R_0 r_0}{r} \Big)^{\frac{2(q-2)}{q}} \left(\fint_{Q_{2r}^+(z_0)} | D u |^q \,{\mu}(dz) \right)^{2/q} +  N \fint_{Q_{2r}^+(z_0)} | F |^{2} \,{\mu}(dz) \\
& \leq N r_0^{\frac{2(q-2)}{q}} \left(\fint_{Q_{2r}^+(z_0)} | D u |^q \,{\mu}(dz) \right)^{2/q} +  N \fint_{Q_{2r}^+(z_0)} | F |^{2} \,{\mu}(dz) .
\end{align*}
Hence, for every $r \in (0, \infty)$ we have
\begin{equation} \label{G-inter-est}
\begin{split}
 \fint_{Q_{2r}^+(z_0)} |b(z)|^2 \,{\mu}(dz)  & \leq N \Big(r_0^{\frac{2(q-2)}{q}} + \gamma_0^{\frac{q-2}{q}}\Big) 
 \left(\fint_{Q_{2r}^+(z_0)} | D u |^q \,{\mu}(dz) \right)^{2/q} \\
 & \qquad + N \fint_{Q_{2r}^+(z_0)} | F |^{2} \,{\mu}(dz) .
\end{split}
\end{equation}
Now let $v \in \cH_2^1(\Omega_T, \mu)$ be a weak solution in $\Omega_T$ of
\begin{equation*} 
\left\{
\begin{aligned}
x_d^\alpha  (\partial_t v +\lambda v) -  D_i \big(x_d^\alpha([a_{ij}]_{2r,z_0}(x_d) D_{j} v+ b_i) \big)  & = \lambda^{1/2} x_d^\alpha f \chi_{Q_{2r}^+(z_0)}, \\
\lim_{x_d \rightarrow 0^+} x_d^\alpha \big([a_{dj}]_{2r,z_0}(x_d) D_{j} v + b_d\big) & =  0.
\end{aligned}
\right.
\end{equation*}
By Lemma \ref{L-2-lemma} and \eqref{G-inter-est}, we have
\begin{equation} \label{U-2.est}
\begin{split}
& \fint_{Q_{2r}^+(z_0)}|V|^2 \,\mu(dz) \leq N \fint_{Q_{2r}^+(z_0)}\Big( |b|^2 + |f|^2 \Big)\,\mu(dz) \\
& \leq N \big(\gamma_0^{1-2/q} + r_0^{2-4/q}\big) \left(\fint_{Q_{2r}^+(z_0)} | D u |^q \,{\mu}(dz) \right)^{2/q}  + N \fint_{Q_{2r}^+(z_0)} |G|^{2} \,{\mu}(dz),
\end{split}
\end{equation}
which yields \eqref{B-u-tilde-est-inter}. Let $w = u - v \in \cH_2^1(Q_{2r}^+(z_0), \mu)$, which is a weak solution of
\[
x_d^\alpha(w_t + \lambda w) - D_i \big(x_d^\alpha  [a_{ij}]_{2r, z_0} (x_d) D_{j} w\big) =0 \quad \text{in} \  Q_{2r}^+(z_0)
\]
with the boundary condition
\[
\lim_{x_d\rightarrow 0^+} x_d^\alpha  [a_{dj}]_{2r, z_0} (x_d) D_{j} w =0 \quad \text{if} \quad B_{2r}(x_0) \cap \partial \overline{\bR}^{d}_+ \not= \emptyset.
\]
Then we apply Proposition \ref{prop1} to conclude that
\[
\begin{split}
& \| W\|_{L_\infty(Q_r^+(z_0))}  \leq N \left(\fint_{Q_{2r}^+(z_0)} |W|^2 \,{\mu}(dz)\right)^{1/2} \\
& \leq  N \left(\fint_{Q_{2r}^+(z_0)} |U|^2 \,{\mu}(dz)\right)^{1/2} +
N\left(\fint_{Q_{2r}^+(z_0)} |V|^2 \,{\mu}(dz)\right)^{1/2}.
\end{split}
\]
From this and \eqref{U-2.est}, we obtain \eqref{D-L-infty-w-inter}.
\end{proof}
\begin{proof}[Proof of Theorem \ref{thm2}]
It suffices to consider the case $p \in (2, \infty)$ as the case $p \in (1,2)$ can be  proved by  using the duality argument as in the proof of Theorem \ref{thm3}.  We first prove the a-priori estimate \eqref{main-thm-est} for each weak solution $u \in \cH_p^1(\Omega_T, \mu)$ of \eqref{eq3.23}. We suppose that $\lambda>0$.
Assume for a moment that
$$
\textup{spt}(u) \subset  (s - (R_0r_0)^2, s + (R_0r_0)^2) \times \bR^{d}_+
$$
with some $s \in (-\infty, T)$ and $r_0 \in (0,1)$. We claim that  \eqref{main-thm-est} holds if $\gamma_0$ and  $r_0$ are sufficiently small depending on $d$, $\alpha$, $\kappa$, and $p$. Let $q \in (2, p)$ be fixed. Applying Proposition \ref{G-approx-propos}, for each $r >0$ and $z_0 \in \overline{\Omega}_T$, we can write
\[
u(t, x) = v(t, x) + w(t, x) \quad  \text{in}\ Q_{2r}^+(z_0),
\]
where $v$ and $w$ satisfy \eqref{B-u-tilde-est-inter} and \eqref{D-L-infty-w-inter}.  Then it follows from the standard real variable argument,  see \cite{DK11b} for example, that
\[
\begin{split}
& \|Du\|_{L_p(\Omega_T, \mu)} + \sqrt{\lambda} \|u\|_{L_p(\Omega_T, \mu)}  \\
& \leq N(\gamma_0^{1-2/q} + r_0^{2-4/q}) \|Du\|_{L_{p}(\Omega_T, \mu)} + N\|F\|_{L_p(\Omega_T, \mu)} +N\|f\|_{L_p(\Omega_T, \mu)}
\end{split}
\]
for $N = N(d, \alpha, \kappa, p)$. From this, and by choosing $\gamma_0$ and $r_0$ sufficiently small so that $N(\gamma_0^{1-2/q} + r_0^{2-4/q}) < 1/2$, we obtain \eqref{main-thm-est}.

We now remove the additional assumption that $\textup{spt}(u) \subset  (s -( R_0r_0)^2, s+ (R_0r_0)^2) \times \bR^{d}_+$ by using a partition of unity argument. Let
$$
\xi=\xi(t) \in C_0^\infty(-(R_0r_0)^2, (R_0r_0)^2)
$$
be a standard non-negative cut-off function satisfying
\begin{equation} \label{xi-0515}
\int_{\bR} \xi^p(s)\, ds =1, \quad  \int_{\bR}|\xi'(s)|^p\,ds \leq \frac{N}{(R_0r_0)^{2p}}.
\end{equation}
For any $s \in (-\infty,  \infty)$, let $u^{(s)}(z) = u(z) \xi(t-s)$ for $z = (t, x) \in \Omega_T$. Then $u^{(s)} \in \cH_p^1(\Omega_T, \mu)$ is a weak solution of
\[
\left\{
\begin{aligned}
x_d^\alpha( u^{(s)}_t + \lambda u^{(s)}) -D_i\big(x_d^\alpha(a_{ij} D_j u^{(s)} - F^{(s)}_{i})\big) & = \lambda^{1/2} x_d^\alpha f^{(s)}\\
\lim_{x_d \rightarrow 0^+}x_d^\alpha (a_{dj}D_j u^{(s)} -F_d^{(s)} ) & = 0
\end{aligned} \right.
\]
in $\Omega_T$, where
\[
F^{(s)}(z) = \xi(t-s) F(z), \quad f^{(s)}(z)   = \xi(t-s) f(z)  +  \lambda^{-1/2}\xi'(t-s) u(z).
\]
As $\text{spt}(u^{(s)}) \subset (s -( R_0r_0)^2, s+ (R_0r_0)^2) \times \bR^{d}_{+}$, we can apply the estimate we just proved and infer that
\[
 \|Du^{(s)}\|_{L_p(\Omega_T, \mu)} + \sqrt{\lambda} \|u^{(s)}\|_{L_p(\Omega_T, \mu)}  \leq N \|F^{(s)}\|_{L_p(\Omega_T, \mu)} +N\|f^{(s)}\|_{L_p(\Omega_T, \mu)}.
\]
Integrating with respect to $s$, we get
\begin{equation} \label{int-0515}
\begin{split}
& \int_{\bR}\Big( \|Du^{(s)}\|_{L_p(\Omega_T, \mu)}^p + \lambda^{p/2} \|u^{(s)}\|^p_{L_p(\Omega_T, \mu)}\Big)\, ds\\
&  \leq N\int_{\bR} \Big( \|F^{(s)}\|^p_{L_p(\Omega_T, \mu)} + \|f^{(s)}\|^p_{L_p(\Omega_T, \mu)} \Big)\, ds.
\end{split}
\end{equation}
It follows from the Fubini theorem and \eqref{xi-0515} that
\[
\int_{\bR}\|Du^{(s)}\|_{L_p(\Omega_T,\mu)}^p\, ds = \int_{\Omega_T}\int_{\bR} |Du(z)|^p \xi^p(t-s)\, ds\,\mu(dz)  = \|Du\|_{L_p(\Omega_T, \mu)}^p.
\]
Similarly,
\[
 \int_{\bR}\|u^{(s)}\|_{L_p(\Omega_T,\mu)}^p\, ds = \|u\|_{L_p(\Omega_T, \mu)}^p,  \quad \int_{\bR}\|F^{(s)}\|_{L_p(\Omega_T,\mu)}^p\, ds = \|F\|_{L_p(\Omega_T, \mu)}^p.
\]
Since $r_0$ depends only on $d$, $\alpha$, $\kappa$, and $p$, from the definition of $f^{(s)}$, \eqref{xi-0515}, and the Fubini theorem, we have
\[
\begin{split}
\left(\int_{\bR} \|f^{(s)}\|_{L_p(\Omega, \mu)}^p\, ds \right)^{1/p} \leq N  \|f\|_{L_p(\Omega_T, \mu)} +  NR_0^{-2} \lambda^{1/2}\|u\|_{L_p(\Omega_T, \mu)}
\end{split}
\]
for $N = N(d,\alpha, \kappa, p)$. Collecting these estimates, we infer from \eqref{int-0515} that
\[
\begin{split}
& \|Du\|_{L_p(\Omega_T, \mu)} + \sqrt{\lambda} \|u\|_{L_p(\Omega_T, \mu)}\\
&  \leq N\|F \|_{L_p(\Omega_T, \mu)} +N\|f\| _{L_p(\Omega_T, \mu)} + NR_0^{-2}\lambda^{-1/2}\|u\|_{L_p(\Omega_T, \mu)}
\end{split}
\]
with $N=N(d,\alpha, \kappa, p)$. Now we choose $\lambda_0 = 2N$. For $\lambda \geq \lambda_0 R_0^{-2}$, we have $NR_0^{-2}\lambda^{-1/2} \leq \sqrt\lambda/2$, and therefore
\[
\begin{split}
& \|Du \|_{L_p(\Omega_T, \mu)}
+ \sqrt{\lambda} \|u \|_{L_p(\Omega_T, \mu)}\\
&  \leq   N \|F \|_{L_p(\Omega_T, \mu)} +N \|f\| _{L_p(\Omega_T, \mu)}+\frac{\sqrt{\lambda}}{2} \|u\|_{L_p(\Omega_T, \mu)} ,
\end{split}
\]
which yields \eqref{eq3.23}.

Finally, the solvability of solution $u \in \cH_p^1(\Omega_T, \mu)$ can be obtained by the method of continuity using the solvability of the equation
\[
\left\{
\begin{aligned}
x_d(u_t + \lambda u) - D_i(x_d^\alpha D_i u - F_i) & = \lambda^{1/2} x_d^\alpha f \\
\lim_{x_d \rightarrow 0^+}x_d^\alpha(D_d u - F_d ) & =  0
\end{aligned} \right.
\]
in $\Omega_T$,  which is proved in Theorem \ref{thm3}. The proof is now completed.
\end{proof}
\subsection{Proof of Corollary \ref{main-thrm}} We adapt an idea in \cite{KRW20}.  Let $\eta\in C_0^\infty((-4,4)\times B_2)$ be such that $\eta\equiv 1$ on $Q_1$. A direct calculation yields that $u\eta\in \cH^1_{p_0}(\Omega_0,\mu)$ satisfies
\begin{equation} \label{eq6.16}
\left\{
\begin{aligned}
x_d^\alpha ((u\eta)_t+\lambda u\eta)- D_i\big(x_d^\alpha (a_{ij} D_j (u\eta) - \widetilde F_i)\big)   &=  x_d^\alpha \tilde f\\
\lim_{x_d \rightarrow 0^+}x_d^\alpha \big(a_{dj} D_j (u\eta) - \widetilde F_d\big)  &= 0
\end{aligned} \right.
\ \ \text{in}\ \ (-4,0)\times \bR^d_+
\end{equation}
with the zero initial condition $(u\eta)(-4,\cdot)=0$,
where
$$
\widetilde F_i=F_i\eta-a_{ij}u D_j\eta,
\quad \tilde f=f\eta+\lambda u\eta +u\eta_t-a_{ij}D_i\eta (D_ju-F_i),
$$
and $\lambda> \lambda_0 R_0^{-2}$.

Let $q=p/(p-1)$, and $G=(G_1,\ldots,G_d),g\in C_0^\infty(Q_1^+)$ satisfying
$$
\|G\|_{L_q(Q_1^+,\mu)}=\|g\|_{L_q(Q_1^+,\mu)}=1.
$$
By Theorem \ref{thm2}, there is a weak solution
 $v\in \cH^1_{q}((-4,0)\times \bR^d_+,\mu)$ to
\begin{equation} \label{eq6.25}
\left\{
\begin{aligned}
-x_d^\alpha (v_t -\lambda v) - D_i\big(x_d^\alpha({a}_{ji} D_j v - G_i)\big) & =  \sqrt\lambda x_d^\alpha g\\
\lim_{x_d\rightarrow 0^+}\big(x_d^\alpha({a}_{jd} D_j v - G_d)\big)  &= 0
\end{aligned} \right.
\ \ \text{in}\ \ (-4,0)\times \bR^d_+
\end{equation}
with the zero terminal condition $v(0,\cdot)=0$ and it satisfies
\begin{equation}
                                        \label{eq2.53}
\sqrt \lambda \|v\|_{L_q((-4,0)\times \bR^d_+,\mu)}+\|Dv\|_{L_q((-4,0)\times \bR^d_+,\mu)}\le N.
\end{equation}
Testing \eqref{eq6.16} and \eqref{eq6.25} with $v$ and $u\eta$ respectively, we get
\begin{equation*}
\int_{Q_1^+}(\nabla u\cdot G+\sqrt \lambda u g)\,d\mu(z)=\int_{Q_2^+}(\nabla v\cdot \widetilde F+v \tilde f)\,d\mu(z),
\end{equation*}
which together with H\"older's inequality gives
\begin{align}
                                \label{eq7.12}
&\Big|\int_{Q_1^+}(\nabla u\cdot G+\sqrt \lambda u g)\,d\mu(z)\Big|\notag\\
&\le \|Dv\|_{L_q(Q_2^+,\mu)}\|\widetilde F\|_{L_p(Q_2^+,\mu)}+\|v\|_{L_{q^*}(Q_2^+,\mu)}\|\tilde f\|_{L_{p^*}(Q_2^+,\mu)},
\end{align}
where $q^*=p^*/(p^*-1)$. From \eqref{eq6.25}, we see that $v\in \cH^1_{q}(Q_{2}^+,\mu)$ satisfies
\begin{equation*} 
\left\{
\begin{aligned}
-x_d^\alpha v_t - D_i\big(x_d^\alpha({a}_{ji} D_j v - G_i)\big)  &=  x_d^\alpha \tilde g\\
\lim_{x_d\rightarrow 0^+}x_d^\alpha({a}_{jd} D_j v - G_d)  &= 0
\end{aligned} \right.
\quad\text{in}\ \ Q_{2},
\end{equation*}
where $\tilde g= -\lambda v + \sqrt{\lambda}g$. When $\alpha\neq 0$, by \eqref{eq3.19}-\eqref{eq3-2.19}, $q^*$ satisfies the condition \eqref{eq2.06} in Lemma \ref{lem2.2}. Then by using Lemma \ref{lem2.2} and \eqref{eq2.53}, we get
\begin{align}
                                \label{eq3.07}
\|v\|_{L_{q^*}(Q_2^+,\mu)}&\le N\|v\|_{L_{q}(Q_{2}^+,\mu)}+ N\|Dv\|_{L_{q}(Q_{2}^+,\mu)} +  N\|v_t\|_{\cH_q^{-1}(Q_{2}^+, \mu)}   \notag\\
&\leq N+ N\|G\|_{L_{q}(Q_{2}^+,\mu)}+N\|\tilde g\|_{L_{q}(Q_{2}^+,\mu)}
\le N\sqrt\lambda.
\end{align}
When $\alpha=0$, by the usual unweighted parabolic Sobolev embedding, we still get \eqref{eq3.07}. It then follows from \eqref{eq7.12}, \eqref{eq2.53}, \eqref{eq3.07}, and the arbitrariness of $G$ and $g$ that
\begin{align}
                    \label{eq2.57}
&\|Du\|_{L_p(Q_1^+,\mu)}+\sqrt \lambda \|u\|_{L_p(Q_1^+,\mu)}\notag\\
&\le N\|\widetilde F|_{L_p(Q_2^+,\mu)}+N \sqrt\lambda \|\tilde f\|_{L_{p^*}(Q_2^+,\mu)}\notag\\
&\le N\|F\|_{L_p(Q_2^+,\mu)}+N\|u\|_{L_p(Q_2^+,\mu)}
+N \sqrt\lambda\|f\|_{L_{p^*}(Q_2^+,\mu)}\notag\\
&\quad +N  \sqrt \lambda (\lambda +1)\|u\|_{L_{p^*}(Q_2^+,\mu)}
+N \sqrt\lambda \|Du\|_{L_{p^*}(Q_2^+,\mu)},
\end{align}
where $N$ is independent of $\lambda$.  Recall that $p^*<p$.
Finally, from \eqref{eq2.57} we conclude \eqref{main-thm-estb} by using H\"older's inequality and a standard iteration argument for a sufficiently large $\lambda$.  See, for example, \cite[pp. 80--82]{Giaq}. The corollary is proved.

\subsection{Proof of Theorem \ref{thm1.4}}

It follows from Corollary \ref{main-thrm} and Proposition \ref{prop1} that for any $q_0\in (1,2)$, if $v \in \cH^1_{p_0}(Q^+_r(z_0), \mu)$ is a weak solution of \eqref{eq11.52}-\eqref{eq12.01}, we have
\begin{equation}
                                        \label{eq5.52b}
\begin{split}
& [D_{x'}v]_{C^{1/2,1}(Q_{r/2}^+(z_0))}
+[\mathcal{V}]_{C^{1/2,1}(Q_{r/2}^+(z_0))}
+\sqrt \lambda [v]_{C^{1/2,1}(Q_{r/2}^+(z_0))} \\
& \le Nr^{-1}\Big(\fint_{Q^+_{r}(z_0)}|Dv|^{q_0}
+\lambda^{q_0/2}|v|^{q_0}\,\mu(dz)\Big)^{1/q_0},
\end{split}
\end{equation}
where $\mathcal{V} =\overline{a}_{dj}(x_d) D_jv$. By using \eqref{eq5.52b}, Theorem \ref{thm2}, and a decomposition argument as in the proof of Proposition  \ref{G-approx-propos}, we have the following the mean oscillation estimate:  if $\textup{spt}(u) \subset  (s -( R_0r_0)^2, s+ (R_0r_0)^2) \times \bR^{d}_+$ for some $s\in \bR$, then for any $\tau\le 30$ and $z_0\in \overline{\Omega_T}$,
\begin{align*}
&\fint_{Q_{\tau r}^+(z_0)}|D_{x'}u-(D_{x'}u)_{Q_{r}^+(z_0)}|
+| \cU-(\cU)_{Q_{r}^+(z_0)}|
+\sqrt\lambda|u-(u)_{Q_{r}^+(z_0)}|\,{\mu}(dz)\\
&\leq  N  \tau^{-(d+2+\alpha_+)}r_0^{2(1-\frac 1 {q_0})}
\Big(\fint_{Q_{r}^+(z_0)}|Du|^{q_0} \,{\mu}(dz)\Big)^{\frac 1 {q_0}}\\
&\ +N  \tau^{-\frac {d+2+\alpha_+} {q_0}}\Big(\fint_{Q_{r}^+(z_0)}( |F|^{q_0} +  |\sqrt\lambda f|^{q_0}) \,{\mu}(dz)\Big)^{\frac 1 {q_0}}\\
&\ +N \tau \Big(\fint_{Q^+_{r}(z_0)}|Du|^{q_0}
+\lambda^{q_0/2}|u|^{q_0}\,\mu(dz)\Big)^{\frac 1{q_0}}\\
&\ +N \tau^{-\frac {d+2+\alpha_+} {q_0}}\gamma_0^{\frac 1 {q_0\nu_1}} \Big(\fint_{Q^+_{r}(z_0)}|Du|^{q_0\nu_2}\,\mu(dz)\Big)^{\frac 1 {q_0\nu_2}},
\end{align*}
where $\nu_1\in (1,\infty)$, $\nu_2=\nu_1/(\nu_1-1)$, and $\cU = a_{dj} D_j u$. Here we used the notation
\[
(g)_{Q_r^+(z_0)} = \fint_{Q_r^+(z_0)} g(z) \,\mu(dz)
\]
for a function $g$ defined in $Q_r^+(z_0)$. The a-priori estimate \eqref{main-thm-estc}
then follows from the mean oscillation estimate, the reverse H\"older's inequality for $A_p$ weights, the weighted mixed-norm Fefferman--Stein type theorems on sharp functions, and the weighted mixed-norm Hardy--Littlewood maximal function theorem. See, for instance, Corollary 2.6, 2.7, and  Section 7 of \cite{MR3812104} for details. The solvability in weighted mixed-norm Sobolev spaces then follows from the estimate \eqref{main-thm-estc} and an approximate argument by using the solvability result in Theorem \ref{thm2}. We omit the details and refer the reader to \cite[Section 8]{MR3812104}.

\subsection{Proof of Corollary \ref{cor1.8}}
We first assume that
\begin{equation}
                        \label{eq1.02}
(d+ 3+\alpha_+)/p_0< 1+(d+ 3+\alpha_+)/p.
\end{equation}
Let $\eta\in C_0^\infty((-4,4)\times B_2)$ be an even function with respect to $x_d$ such that $\eta\equiv 1$ on $Q_1$. A direct calculation yields that $w: = u\eta\in W^{1,2}_{p_0}(\Omega_0,\mu)$ satisfies
\begin{equation} \label{eq6.16b}
\left\{
\begin{aligned}
a_0w_t- a_{ij}D_{ij}w - \frac \alpha {x_d} a_{dd}D_d w+\lambda c_0 w   &=  \tilde f\\
\lim_{x_d \rightarrow 0^+}x_d^\alpha a_{dd} D_d w  &= 0
\end{aligned} \right.
\ \ \text{in}\ \ (-4,0)\times \bR^d_+
\end{equation}
with the zero initial condition $w(-4,\cdot)=0$,
where
\[
\begin{split}
\tilde f &=f\eta+\big(a_0 \eta_t-a_{ij}D_{ij}\eta-\alpha a_{dd}D_d\eta/x_d+ (\lambda-1) c_0 \eta\big)u\\
& \qquad   -(a_{ij}+a_{ji})D_i\eta D_j u,\\
\end{split}
\]
$\lambda> \lambda_0 R_0^{-2}$ is a fixed number, and $\lambda_0$ is the constant from Theorem \ref{para.theorem} with $q=p$ and $\omega\equiv K=1$. It follows from Lemma \ref{lem2.2} and \eqref{eq1.02} that
\begin{equation}
                                    \label{eq1.13}
\|u\|_{L_p(Q_2^+,\mu)}+\|Du\|_{L_p(Q_2^+,\mu)}\le N\|u\|_{W^{1,2}_{p_0}(Q_2^+,\mu)}.
\end{equation}
By using Theorem \ref{para.theorem} with $q=p$ and $\omega\equiv K=1$, \eqref{eq6.16b} has a unique solution $v\in W^{1,2}_{p}(\Omega_0,\mu)$. Since $\tilde f$ is compactly supported, as in {\bf Case II} of the proof of Theorem \ref{thm3}, we have $v\in W^{1,2}_{p_0}(\Omega_0,\mu)$. Now by the uniqueness of $W^{1,2}_{p_0}(\Omega_0,\mu)$-solutions to \eqref{eq6.16b}, we conclude that $u\eta=v\in W^{1,2}_{p}(\Omega_0,\mu)$. Furthermore, by Theorem  \ref{para.theorem} and \eqref{eq1.13},
$$
\|u\|_{W^{1,2}_p(Q_1^+,\mu)}\le N\|\tilde f\|_{W^{1,2}_p(Q_2^+,\mu)}
\le N\|f\|_{W^{1,2}_p(Q_2^+,\mu)}+N\|u\|_{W^{1,2}_{p_0}(Q_2^+,\mu)},
$$
which, together with  H\"older's inequality and a standard iteration argument, yields \eqref{eq6.27} under the additional condition \eqref{eq1.02}.

Finally, for general $p\in (p_0,\infty)$, the result follows from an induction argument by taking a sequence of increasing exponents $p_j,j=1,\ldots,n,$ such that $p_n=p$ and
$$
(d+ 3+\alpha_+)/p_{j-1}< 1+(d+ 3+\alpha_+)/p_j
$$
for $j=1,\ldots,n$.

\end{document}